 \def\@seccntformat#1{\csname the#1\endcsname.\quad}
\theoremstyle{plain}
\newtheorem{theorem}{Theorem}
\newtheorem{proposition}[theorem]{Proposition}
\newtheorem{lemma}[theorem]{Lemma}
\theoremstyle{remark}
\numberwithin{equation}{section}
\newcommand{\mL}{L\kern-0.08cm\char39}
\newcommand{\last}[1]{\epsilon_{#1}}            
\newcommand{\modm}[1]{\, (\operatorname{mod}\, #1)}
\newcommand{\Fix}{\operatorname{Fix}}
\newcommand{\Per}{\operatorname{Per}}
\newcommand{\Lip}{\operatorname{Lip}}
\newcommand{\id}{{\rm id}}
\newcommand{\End}{E}
\newcommand{\Ord}{O}
\newcommand{\Branch}{B}
\newcommand{\emptyword}{o}
\newcommand{\NNN}{\mathbb N}
\newcommand{\ZZZ}{\mathbb Z}
\newcommand{\RRR}{\mathbb R}
\newcommand{\BBb}{\mathcal{B}}
\newcommand{\DDd}{\mathcal{D}}
\newcommand{\diam}{\operatorname{diam}}
\newcommand{\orbit}{\operatorname{Orb}}
\newcommand{\closure}[1]{\overline{#1}}
\newcommand{\boundary}[1]{\partial {#1}}
\newcommand{\interior}[1]{\operatorname{int}(#1)}
\newcommand{\eps}{\varepsilon}
\newcommand{\abs}[1]{\lvert#1\rvert}
\newcommand{\mr}[1]{\href{http://www.ams.org/mathscinet-getitem?mr=#1}{MR#1}}
\begin{document}

\title[Transitive dendrite map with infinite decomposition ideal]
{Transitive dendrite map with \\infinite decomposition ideal}

\author[Vladim\'\i r \v Spitalsk\'y]{Vladim\'\i r \v Spitalsk\'y}
\address{Department of Mathematics, Faculty of Natural Sciences,
          Matej Bel University, Tajovsk\'eho 40, 974 01 Bansk\'a Bystrica,
          Slovakia}
\email{vladimir.spitalsky@umb.sk}

\subjclass[2010]{Primary 37B05, 37B20, 37B40; Secondary 54H20}

\keywords{Transitive map, decomposition ideal, dense periodicity, dendrite.}

\begin{abstract}
By a result of Blokh from 1984, every transitive map of a tree has the relative specification property,
and so it has finite decomposition ideal, positive entropy and dense periodic points.
In this paper we construct a transitive dendrite map with
infinite decomposition ideal and a unique periodic point.
Basically, the constructed map is (with respect to any non-atomic invariant measure) a measure-theoretic extension
of the dyadic adding machine. Together with an example of Hoehn and Mouron from 2013,
this shows that transitivity on dendrites is much more varied than that on trees.
\end{abstract}

\maketitle

\thispagestyle{empty}

\section{Introduction}\label{S:intro}

The fundamental result on topologically transitive graph maps is the theorem
of Blokh from 1984 \cite{Blo84}. It states that every transitive map $F$ on a connected
graph $X$
\begin{itemize}
  \item either is (conjugate to) an irrational rotation of the circle
  \item or has the relative specification property.
\end{itemize}
(Recall that a dynamical system
$(X,F)$ has the relative specification property if there is a regular periodic
decomposition $\DDd=(D_0,\dots,D_{m-1})$ of $X$ such that the restrictions
$F^m|_{D_i}$ of the $m$th iterate of $F$ have the specification property;
for the corresponding definitions, see the next section.)
This result was partially extended to compacta containing a free arc
(that is, an arc every non-end point of which is an interior point of the arc)
in \cite{DSS12}. It is proved there that if $X$ is a compact metric space with a free arc
and $F:X\to X$ is a transitive map, then exactly one of the following two statements holds:
\begin{itemize}
  \item $X$ is a disjoint union of finitely many circles permuted by $F$,
   and the corresponding iterate of $F$,
   restricted to any of the circles, is (conjugate to) an irrational rotation;
  \item $F$ is relatively mixing, non-injective, has dense periodic points and
   positive topological entropy.
\end{itemize}

On compacta without free arcs, such a dichotomy is not true. Counterexamples
exist even on curves.
For example, one can take any transitive aperiodic zero entropy map on the Cantor set $C$ and extend it,
using the results from \cite{AKLS,BS03}, to a transitive map on the cone over $C$
(the so-called Cantor fan) with a unique periodic point and zero entropy.
However, the Cantor fan is not locally connected and, as far as we know,
the first example of a locally connected curve for which Blokh's dichotomy is not true,
was given by Hoehn and Mouron \cite{HM13}.
They proved that on dendrites
(that is, on locally connected continua containing no circle)
weak mixing does not imply mixing.
Moreover, the map from \cite{HM13} has a unique periodic point, as was shown in \cite{AHNO13}.

The purpose of the present paper is to provide another example of a transitive
dendrite map which shows, together with \cite{HM13}, that transitivity on dendrites
is much more varied than that on trees or graphs.
In contrast with \cite{HM13}, our map, though transitive, is not (relatively) weakly mixing.
It has infinite decomposition ideal, that is, it admits an infinite sequence
of refining regular periodic decompositions.
These and other properties of the constructed dendrite map are summarized
in the following theorem.
(Recall that a map $f$ is almost open if it sends non-empty open sets to sets with
non-empty interior. As usual, $\BBb_X$ is the $\sigma$-algebra of Borel subsets of $X$.)

\begin{theorem}\label{T:main}
There is a dendrite $X$ and a continuous map $f:X\to X$ such that
\begin{enumerate}
  \item[(a)] $f$ is transitive with all transitive points being end points of $X$;
  \item[(b)] $f$ has infinite decomposition ideal, that is, $f$ is not relatively totally transitive;
  \item[(c)] $f$ has a unique periodic point (which in fact is a fixed point);
  \item[(d)] $f$ is almost open but not open;
  \item[(e)] for any non-atomic $f$-invariant measure $\mu$,
     $(X,\BBb_X,f,\mu)$ is a measure-theoretic extension of the dyadic adding machine;
  \item[(f)] there is a compatible convex metric $d$ on $X$ such that $f$ is Lipschitz
     with Lipschitz constant arbitrarily close to $1$;
  \item[(g)] $f$ has positive topological entropy.
\end{enumerate}
\end{theorem}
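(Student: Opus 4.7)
The guiding clue is property (e): the construction must be built around the dyadic odometer. Concretely, I would aim to build $X$ so that at each level $n\ge 1$ it carries a regular periodic decomposition $\DDd_n=(D_0^{(n)},\dots,D_{2^n-1}^{(n)})$ cyclically permuted by $f$ exactly as the adding machine permutes $\ZZZ/2^n\ZZZ$, with $\DDd_{n+1}$ refining $\DDd_n$ via $D_i^{(n)}=D_i^{(n+1)}\cup D_{i+2^n}^{(n+1)}$. Such a refining sequence immediately witnesses property (b), and the inverse limit of the factor maps $X\to\ZZZ/2^n\ZZZ$ yields the factor map onto the dyadic odometer required by (e).

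The plan is an inductive construction of $X$ as the Hausdorff limit of finite trees $X_n$. Start with two arcs $D_0^{(1)},D_1^{(1)}$ joined at a single point $p$, and a continuous $f_1\colon X_1\to X_1$ fixing $p$ and swapping the two arcs homeomorphically. At step $n\to n+1$, inside each piece $D_i^{(n)}$ attach small new branches so that the piece splits as prescribed above, and define $f_{n+1}$ so that it implements the cyclic action on $\DDd_{n+1}$ and agrees, off the attached branches, with a controlled refinement of $f_n$. If the diameters of the newly attached branches decrease sufficiently fast, the sequence $X_n$ converges in the Hausdorff metric to a dendrite $X$ and the $f_n$ converge uniformly to a continuous $f\colon X\to X$. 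Prescribing a convex compatible metric on each $X_n$ consistently and choosing the local expansion factors carefully will make $f$ Lipschitz with constant arbitrarily close to $1$, supplying (f).

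The remaining properties are then checked as follows. For (a), transitivity is arranged by making $f_n$ approximately transitive on each piece and by specifying at each stage an orbit that becomes dense; that every transitive point is an end point is forced because any branch point of $X$ is already a branch point of some $X_n$, hence has its $f$-orbit confined to the skeleton of $X_n$, which is nowhere dense in $X$. For (c), the unique periodic point is $p$: any other periodic point would project via $\pi\colon X\to\varprojlim\ZZZ/2^n\ZZZ$ to a periodic point of the aperiodic odometer. For (d), almost openness is enforced inductively, while failure of openness is arranged near $p$, where whole branches are collapsed by $f$. For (e), $\pi$ sends every non-atomic invariant measure to the Haar measure on the odometer, making $f$ the claimed extension. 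Finally (g) is obtained by embedding, in the transition from $X_n$ to $X_{n+1}$, a small ``horseshoe-type'' folding on the new branches, contributing a definite amount of entropy at each level.

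The main obstacle is the tension between (b), (c) and (g) on one hand and the regularity requirements on the other: an odometer factor forbids all periodic orbits except $p$ and, on its own, has zero entropy, yet we require positive entropy and genuine transitivity. The resolution is that the vertical branching introduced at each inductive step can refresh transitivity and inject entropy while projecting trivially to the odometer factor. The other substantial technical burden is keeping $X$ a dendrite (connected, locally connected, loop-free) and maintaining quantitative control on the $f_n$ so that the limit is simultaneously continuous, almost open, Lipschitz in a convex metric, and compatible with the refining decompositions $\DDd_n$; this will require careful numerical choices of the branch diameters and expansion rates at every stage.
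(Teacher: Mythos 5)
Your high-level plan correctly identifies the organizing principle---a dendrite carrying a refining sequence of regular periodic decompositions $\DDd_n$ of lengths $2^n$, cyclically permuted like $\ZZZ/2^n\ZZZ$, yielding both property (b) and the odometer factor for (e)---and you correctly anticipate the tension between an odometer structure (which forbids periodic points, forces zero entropy on the factor), positive entropy, a unique periodic point, and Lipschitz constant close to~$1$. The paper indeed works on the universal dendrite of order~$3$ and sets up exactly such a refining family $\DDd_m=(D_\gamma)_{\gamma\in\Sigma^m}$ with $F$ rotating it, so the architecture you describe is the right one. However, your execution strategy diverges in a way that, as written, does not resolve the tension you diagnose.

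The concrete gap is that you aim to build $f$ \emph{directly} as a uniform limit of tree maps $f_n$, each already close to Lipschitz-$1$, with small horseshoes inserted at every stage. This cannot work as stated: every $f_n$ on a finite tree with dense periodic points would contaminate the limit, and injecting ``a definite amount of entropy at each level'' via local horseshoes demands local expansion that eventually contradicts a global Lipschitz bound arbitrarily close to $1$. The paper's resolution is a two-stage construction you do not mention: it first builds a map $F$ that has \emph{two} fixed points $a_\emptyword,b_\emptyword$, is \emph{not} globally Lipschitz (the expansion and the single horseshoe live on the low-level skeleton $X^{(0)}\cup X^{(1)}$, near those fixed points), and only the restrictions $F|_{X_\alpha}$ for $\abs{\alpha}\ge 2$ are Lipschitz-$L_m^2$; then it \emph{collapses} a nowhere dense invariant subdendrite $X^{(m)}$ to a point. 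The collapse merges the two fixed points into one (giving (c)), kills the bad Lipschitz behaviour (giving (f), with constant $L_m^2\to 1$), and preserves entropy via Bowen's fibre-entropy inequality because $h(F|_{X^{(m)}})=0$ (giving (g)). Without this collapsing step your plan has no mechanism to get below the period-$2$ cycle you create on $X_1$, and no mechanism to reconcile (f) with (g). Two secondary gaps: the projection $\pi$ to $\varprojlim\ZZZ/2^n\ZZZ$ that you invoke for (c) and (e) is only well-defined off the boundaries $\bigcup_{\gamma\ne\delta}(D_\gamma\cap D_\delta)$, which is exactly where the fixed point sits, so ``periodic points project to periodic points of the odometer'' needs the separate argument (used in the paper) that periodic points in the boundary skeleton have $\omega$-limit in the fixed-point set; and transitivity of the limit of non-transitive $f_n$ is asserted but not argued---the paper devotes a full subsection, via the map $\varrho$ on index words, to establish it.
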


After we had proved this theorem, the preprint \cite{AHNO13} appeared, where the authors ask
whether every transitive dendrite map admits a terminal periodic decomposition.
Our result gives a negative answer to this question. Let us remark that, by
\cite{AHNO13}, if a transitive dendrite map has infinite decomposition
ideal then the set of transitive points is as in our condition (a), that is,
no transitive point is a cut point of $X$.

The construction of the map $f$ from Theorem~\ref{T:main}
goes as follows (see Section~\ref{S:constr} for details).
We start with the universal dendrite $X$ of order $3$
and the set $Q^*$ of all words of dyadic rationals.
Then, for each word $\alpha\in Q^*$, we choose an arc $A_\alpha$ in $X$.
This is done in such a way that, besides other properties, the union $X^{(m)}$ of all arcs $A_\alpha$
with the length of $\alpha$ at most $m$ is a nowhere dense subdendrite of $X$,
and the union of the increasing sequence $X^{(0)}\subseteq X^{(1)}\subseteq X^{(2)}\subseteq \dots$
is dense in $X$.
Then we inductively define a map $F:X\to X$ on every $X^{(m)}$
by specifying $F$ on the involved arcs $A_\alpha$.
To do that, we use what we call index maps $\tau_\alpha$. Every such
$\tau_\alpha$ maps dyadic rationals $Q$ into $Q$ in a special way, see Lemmas~\ref{L:tau-0} and \ref{L:tau-12}.
On the set $X^{(\infty)}=X\setminus\bigcup_m X^{(m)}$, $F$ is defined
simply as a continuous extension.
The key property of $F$ is that there is a map $\varrho:Q^*\to Q^*$
(defined via index maps $\tau_\alpha$)
such that $F(A_\alpha)\supseteq A_{\varrho(\alpha)}$ for every $\alpha$,
with equality for all $\alpha$ of length at least $2$.
The dynamical properties of $F$ then follows from the corresponding properties of $\varrho$, see
e.g.~Lemmas~\ref{L:F-RPD-rho} and \ref{L:F-trans-Ralpha-implies-trans}.
In Section~\ref{S:F-props} we prove that $F$ has all the properties stated in Theorem~\ref{T:main}
but (c) and (f). Collapsing a subdendrite $X^{(m)}$ to a point gives the desired map $f$
with Lipschitz constant enough close to $1$ when $m$ is sufficiently large;
see Section~\ref{S:proof}.

\section{Preliminaries}\label{S:prelim}
By $\sqcup$ we denote the disjoint union.
The sets of all positive integers and all real numbers are denoted by $\NNN$
and $\RRR$, respectively.
For a bounded interval $I\subseteq\RRR$, $\abs{I}$ denotes the length of $I$.
A map $f:X\to Y$ is called \emph{open} (\emph{almost open}) if $f(U)$
is open (has non-empty interior, respectively) in $Y$ for every non-empty open subset $U$ of $X$.
If $(X,d), (Y,\varrho)$ are metric spaces and $L\ge 0$,
a map $f:X\to Y$ is called \emph{Lipschitz-$L$}
if $\varrho(f(x),f(y))\le L d(x,y)$ for every $x,y\in X$; the smallest such $L$
is the \emph{Lipschitz constant} of $f$ and is denoted by $\Lip(f)$.

\subsection{Continua}
A \emph{continuum} is a compact connected metric space.
A \emph{curve} is a one-dimensional continuum.
An \emph{arc} $A$ is any topological space homeomorphic to the unit interval $[0,1]$; if
$a$ and $b$ are the images of $0$ and $1$, we write $A=[a,b]$.
A \emph{graph} is a continuum which can be written as the union of finitely many
arcs, any two of which are either disjoint or intersect only
in one or both of their end points.
A \emph{tree} is a graph containing no simple closed curve.
A \emph{dendrite} is a (possibly degenerate)
locally connected continuum containing no simple closed curve.
A dendrite is a tree if and only if it is non-degenerate and the set of end points
is finite.

Let $X$ be a continuum.
A point $x\in X$ is called a \emph{cut point} of $X$ if $X\setminus\{x\}$ is disconnected.
For the definition of the \emph{order} of a point $x\in X$, see e.g.~\cite[Definition~9.3]{Nad}.
If $X$ is a continuum, a point $x\in X$ is called an \emph{end (ordinary, branch) point}
of $X$ if its order is one (two, at least three, respectively). The sets
of all end, ordinary and branch points of $X$ are denoted by $\End(X)$, $\Ord(X)$
and $\Branch(X)$. For every dendrite $X$, $\Branch(X)$ is countable
and $\End(X)$ is totally disconnected $G_\delta$ \cite[pp.~278 and 292]{Kur2};
further, if $\Branch(X)$ is dense then $\End(X)$ is dense, hence residual.

A metric $d$ on a continuum $X$ is said to be \emph{convex}
provided for every distinct $x,y\in X$
there is $z\in X$ such that $d(x,z)=d(z,y)=d(x,y)/2$.
If $d$ is convex then for every $a\ne b$ there is an arc
$A$ with end points $a,b$,
the length (that is, the Hausdorff one-dimensional measure)
of which is
equal to $d(a,b)$; every
such arc is called \emph{geodesic}.
Every subarc of a geodesic arc $A=[a,b]$ is geodesic; thus, for every $c\in A$, $d(a,b)=d(a,c)+d(c,b)$.
In dendrites equipped with a convex metric, every arc is geodesic.

Let $X$ be a dendrite and $Y\subseteq X$ be a subdendrite of $X$. A map
$\rho_Y:X\to Y$ is called the \emph{first point retraction} of $X$ onto $Y$
if $\rho_Y(x)=x$ for every $x\in Y$ and, for $x\in X\setminus Y$ and
$C$ being the (connected) component  of $X\setminus Y$ containing $x$,
$f_Y(x)$ is the unique point of the boundary of $C$; see e.g.~\cite[pp.~175--176]{Nad}.

\subsection{Dynamical systems}
A \emph{(discrete) dynamical system} is a pair $(X,f)$ where $X$ is a compact metric
space and $f:X\to X$ is a
continuous map.
A subset $A\subseteq X$ is called \emph{$f$-invariant} (\emph{strongly $f$-invariant})
if $f(A)\subseteq A$ ($f(A)=A$, respectively).
The iterates of $f$ are defined by $f^0=\id_X$ (the identity map on $X$) and
$f^n=f^{n-1}\circ f$ for $n\ge 1$.
A point $x\in X$ is a \emph{fixed (periodic) point} of $f$ if
$f(x)=x$ ($f^n(x)=x$ for some $n\in\NNN$, respectively).
By $\Fix(f)$ and $\Per(f)$ we denote the sets of all fixed and all periodic points of $f$.
The \emph{orbit} of $x$ is the set $\orbit_f(x) = \{f^n(x):\ n\ge 0\}$
and the \emph{trajectory} of $x$ is the sequence $(f^n(x))_{n\ge 0}$.
A \emph{backward trajectory} of $x$ is any sequence $(x_{-n})_{n\ge 0}$ in $X$ such that $x_0=x$ and $f(x_{-n})=x_{-n+1}$
for every $n\ge 1$.

The \emph{topological entropy} of $(X,f)$ will be denoted by $h(f)$.
A dynamical system $(X,f)$ is \emph{(topologically) transitive} if
for every non-empty open sets $U,V\subseteq X$ there is $n\in\NNN$ such that
$f^n(U)\cap V\ne\emptyset$. A point whose orbit is dense is called a \emph{transitive point}.
If $(X,f^n)$ is transitive for all $n\in\NNN$ we say that $(X,f)$ is \emph{totally transitive}.
If $(X\times X, f\times f)$ is transitive then $(X,f)$
is called \emph{(topologically) weakly mixing}.
A system $(X,f)$ is
\emph{(topologically) strongly mixing}
provided for every non-empty open sets $U,V\subseteq X$ there is $n_0\in\NNN$ such that
$f^n(U)\cap V\ne\emptyset$ for every $n\ge n_0$.
A system $(X,f)$ is
\emph{(topologically) exact} or \emph{locally eventually onto} if
for every non-empty open subset $U$ of $X$ there is $n\in\NNN$ such
that $f^n(U)=X$. Further, $(X,f)$ is \emph{Devaney chaotic}
provided $X$ is infinite, $f$ is transitive and has dense set of periodic points.
Finally, a system $(X,f)$ is said to satisfy the \emph{specification property}
if for every $\eps>0$ there is $m$ such that for every $k\ge  2$, for every $k$ points $x_1,\dots,x_k\in X$, for every integers $a_1\le b_1<\dots<a_k\le b_k$ with
$a_i-b_{i-1}\ge m$ ($i=2,\dots,k$) and for every integer $p\ge m+b_k-a_1$, there
is a point $x\in X$ with $f^p(x)=x$ such that
$d(f^n(x),f^n(x_i))\le \eps$ for $a_i\le n\le b_i, 1\le i\le k$.

\subsection{Regular periodic decompositions}
Let $X$ be a compact metric space. A set $D\subseteq
X$ is \emph{regular closed} if it is the closure of its
interior or, equivalently, if it equals the closure of an
open set.

Let $f:X\to X$ be a continuous map on a compact metric space. By \cite{Banks},
a \emph{regular
periodic decomposition} for $f$ is a finite sequence
$\mathcal D=(D_0,\dots ,D_{m-1})$ of $m\ge 1$ regular closed subsets
of $X$ covering $X$ such that $f(D_i)\subseteq D_{i+1 \modm{m}}$ for
$0\leq i<m$ and $D_i\cap D_j$ is nowhere dense in $X$
for $i\ne j$.
The integer
$m$ is called the \emph{length} of $\mathcal D$.
The set of the lengths of all regular periodic decompositions of $f$
is called the \emph{decomposition ideal} of $f$ and is denoted by $DI(f)$;
always $1\in DI(f)$.

Let $f:X\to X$ be a transitive map.
If $f^n$ is non-transitive for some
$n\geq 2$, then $f$ has a regular periodic decomposition
of length $m\ge 2$ dividing $n$ \cite[Corollary~2.1]{Banks}.
Thus, $DI(f)$ equals $\{1\}$ if and only if $f$ is totally transitive.
If $\mathcal D=(D_0,\dots ,D_{m-1})$ is a regular periodic decomposition for $f$
then, for every $i$, $D_i$ is strongly $f^m$-invariant
and $f^m|_{D_i}$ is transitive \cite[Theorem~2.1]{Banks}.

Following \cite{Banks}, we say that a map $f:X\to X$ is
\emph{relatively totally transitive (relatively mixing)} if there is a regular periodic decomposition
$\mathcal D=(D_0,\dots ,D_{m-1})$ for $f$ such that $f^m|_{D_i}$ is totally transitive (mixing)
for every $i$. Analogously we define the relative specification property.
Note that a transitive map is not relatively totally transitive if and only if
its decomposition ideal is infinite.
\emph{Adding machines} are fundamental examples of such systems;
for the definition, see e.g.~\cite[p.~520]{Banks}.

\subsection{Auxiliary index maps}
Here we construct auxiliary maps
which will be used in the proof of Theorem~\ref{T:main}, see Section~\ref{S:constr}.
We will need two types of these `index' maps. The first type (see Lemma~\ref{L:tau-0}) will be used
in the definition of $F(b_r)$ ($r\in Q$). The other type (see Lemma~\ref{L:tau-12}) will be
used for the definition of $F|_{A_r}$ ($r\in Q$)
and $F|_{A_\alpha}$ ($\abs{\alpha}\ge 2$), see (\ref{EQ:def-F-X1}) and (\ref{EQ:def-F-X2}).
We say that a function
$g:Z\to\RRR$ defined on a non-empty set $Z$,
or an indexed set $\{g(z)\}_{z\in Z}\subseteq \RRR$, is \emph{null} and
we write
$
 \lim_{z\in Z} g(z)=0
$
if for every $\eps>0$ there are at most finitely many points
$z\in Z$ with $\abs{g(z)}>\eps$.

\begin{lemma}\label{L:tau-0}
 Let $\varphi:[0,1]\to [0,1]$ be a continuous surjection
 with $\varphi(t)<t$ for $t\in (0,1)$.
 Let $p\ge 2$ be an integer and $R_c$ ($0\le c<p$)
 be pairwise disjoint countable dense subsets of $(0,1)$; put $R=\bigcup_c R_c$.
 Then there is a map $\xi:R\to R$ and a two-sided sequence $(z_m)_{m\in \ZZZ}$ in $R$ such that
 \begin{enumerate}
   \item[(a)] $\xi$ is a surjection, which is one-to-one outside of a countable set, on which it is two-to-one;
   \item[(b)] $\xi(R_c)=R_{c+1 \modm{p}}$ for every $0\le c<p$;
   \item[(c)] $\varphi(r) < \xi(r)$ for every $r\in R$ and $\lim\limits_{r\in R} (\xi(r)- \varphi(r))=0$;
   \item[(d)] $\xi(z_m)=z_{m+1}>z_m>\varphi(z_{m+1})$ for every $m\in\ZZZ$, $\lim\limits_{m\to -\infty} z_m=0$ and $\lim\limits_{m\to \infty} z_m=1$;
   \item[(e)] for every $r\in R$ there are $n_r\ge 0$ and $m_r\in\ZZZ$ with
     $$
      r>\xi(r)>\dots > \xi^{n_r-1}(r) \ < \ \xi^{n_r}(r)=z_{m_r} \ < \ \xi^{n_r+1}(r)=z_{m_r+1} < \dots
     $$
     and $\xi^i(r)\not\in\{z_m:\ m\in\ZZZ\}$ for every $0\le i<n_r$;
   \item[(f)] $\lim_{n\to\infty} \xi^n(r)=1$ for every $r\in R$; moreover, every $r\in R$ has a backward trajectory converging to $1$;
   \item[(g)] for every $c,d<p$, $r\in R_c$ and $s\in R$ there are $h,k\in\NNN_0$ and $t\in R_{d}\cap (\varphi(r'),\xi(r'))$
     (where $r'=\xi^k(r)$) with $\xi^h(t)=s$.
 \end{enumerate}
\end{lemma}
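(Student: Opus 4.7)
My approach is to build $(z_m)$ and $\xi$ in two phases: first the doubly-infinite backbone, then $\xi$ on the remaining points of $R$ via a back-and-forth construction. \emph{Backbone.} Continuity together with $\varphi(t)<t$ on $(0,1)$ and surjectivity onto $[0,1]$ forces $\varphi(0)=0$ and $\varphi(1)=1$. Starting from any $z_0\in R_0$, I inductively choose, for $m\ge 0$, a point $z_{m+1}\in R_{(m+1)\modm{p}}\cap(z_m,1)$ with $\varphi(z_{m+1})<z_m$ (possible by continuity since $\varphi(z_m)<z_m$) and $1-z_{m+1}<1/(m+1)$; for $m\le 0$, choose $z_{m-1}\in R_{(m-1)\modm{p}}\cap(\varphi(z_m),z_m)$ with $z_{m-1}-\varphi(z_m)<1/|m|$. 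Density of each $R_c$ makes every step possible. The limit $z_m\to 1$ is immediate, and $z_m\to 0$ as $m\to-\infty$ follows because the decreasing limit $L\ge 0$ satisfies $L=\varphi(L)$, hence $L=0$; this establishes (d).

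\emph{Extension by back-and-forth.} I enumerate $R'=R\setminus\{z_m:m\in\ZZZ\}$ together with the density quadruples $(c,d,r,s)$ of (g), and build $\xi$ in countably many stages, each handling one pending task. The basic rule is: whenever $\xi$ is newly defined at some $r\in R_c$, set $\xi(r)\in R_{(c+1)\modm{p}}\cap(\varphi(r),\varphi(r)+\eps_j)$ with $\eps_j\to 0$; this maintains (b) and automatically delivers the null half of (c). Surjectivity (a) is enforced at additional stages by producing a preimage of the correct color for any $y\in R$ still lacking one. A quadruple $(c,d,r,s)$ is handled by waiting until $r':=\xi^k(r)$ has been committed to some backbone point $z_{m'}$, then picking a fresh $t\in R_d\cap(\varphi(z_{m'}),z_{m'+1})$ and installing a finite strictly-decreasing chain of length $h$ from $t$ to $s$ whose colors match, ending with a jump onto the backbone if $s\in\{z_m:m\in\ZZZ\}$. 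To secure the second half of (f), I also attach to each $z_m$ an extra non-backbone preimage $u_m\in R_{(m-1)\modm{p}}$ chosen very close to $z_m$ from below; continuity of $\varphi$ at $1$ with $\varphi(1)=1$ keeps $z_m-\varphi(u_m)$ small for all but finitely many $m$, and the null condition then forces every backward trajectory entering one of these non-backbone subtrees to converge to $1$ (the monotone limit $L$ of the preimages satisfies $\varphi(L)=L$, so $L=1$).

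\emph{Main obstacle.} The real work is the bookkeeping: simultaneously maintaining the one-to-one/two-to-one pattern of (a), the null gap of (c) at the countably many ``backbone jumps'' $\xi(u)=z_m$, the strict-decrease structure of (e), and the density condition (g) across infinitely many stages, without any later task damaging an earlier commitment. Flexibility comes from the density of each $R_c$, the continuity of $\varphi$, the freedom to choose the chain length $h$ modulo $p$, and the fact that the null condition tolerates finitely many exceptions beyond any $\eps>0$, absorbing the few ``bad'' backbone indices $m$ for which no adequately close $u_m$ exists.
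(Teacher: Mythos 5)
Your overall architecture — a backbone sequence $(z_m)$ plus an extension of $\xi$ to the remaining points, keeping the gaps $\xi(r)-\varphi(r)$ small to deliver the null condition — matches the paper's in spirit. But there are two genuine gaps.

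\emph{Backbone.} Your two conditions on $z_{m+1}$ (namely $\varphi(z_{m+1})<z_m$ and $1-z_{m+1}<1/(m+1)$) can become jointly unsatisfiable, because the speed at which $(z_m)$ can approach $1$ is dictated by $\varphi$ itself and need not be as fast as $1-1/m$. Concretely, take a $\varphi$ with $\varphi(t)=1-(1-t)(2-t)$ for $t$ near $1$; writing $a_m=1-z_m$, the constraint $\varphi(z_{m+1})<z_m$ becomes $a_{m+1}(1+a_{m+1})>a_m$, i.e.\ $a_m-a_{m+1}<a_{m+1}^2$, which forces $1/a_{m+1}-1/a_m<1$ and, more precisely, $1/a_m = 1/a_0 + m - \sum_{i\le m} a_i + o(1)$. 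Since $\sum_{i\le m} a_i\sim \log m$ whenever $a_i\sim 1/i$, eventually $1/a_m < m$, i.e.\ $a_m>1/m$ — and then there is no admissible $z_{m+1}$, because $g^{-1}(1/m)>1/(m+1)$ for $g(s)=s+s^2$. The paper imposes no rate; the correct argument is simply that an increasing sequence constrained only by $\varphi(z_{m+1})<z_m$, chosen greedily, must converge to $1$ (a limit $L<1$ contradicts continuity and $\varphi(L)<L$). Your parenthetical justification (``possible by continuity since $\varphi(z_m)<z_m$'') covers only the first constraint, not the conjunction.

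\emph{Extension.} Your basic rule ``set $\xi(r)\in R_{(c+1)\bmod p}\cap(\varphi(r),\varphi(r)+\eps_j)$'' maintains (b) and the null half of (c), but by itself produces forward orbits that shadow $\varphi$-orbits indefinitely and therefore never land on the backbone. Yet property (e) requires every orbit to hit $\{z_m\}$ after finitely many steps, and your treatment of (g) explicitly ``waits until $r':=\xi^k(r)$ has been committed to some backbone point $z_{m'}$''. So (e) is assumed rather than secured. The paper secures it by committing, in each inductive step $j$, an entire chain $(s_{k_{j,i}})_{-\infty<i\le l_j}$: the forward half follows $\varphi$ with error $<\eps_j$ until $\varphi^{l_j}(s_{k_{j,0}})$ has dropped below a small backbone point $z_{m_j}<\eps_j$ (condition $(A_j)$) and then jumps onto it, while the backward half increases to $1$ (condition $(E_j)$). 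This single device delivers (e) and (f) by construction, and the jump gap is also $<\eps_j$, so (c) survives. Your plan treats surjectivity, density and the jump as separate ``tasks'' to be interleaved; that can likely be made to work, but it reintroduces exactly the interactions (does a density chain damage an earlier surjectivity commitment? does a jump to the backbone spoil the null gap?) that the chain-per-step approach is designed to avoid, and you acknowledge but do not resolve them. As written, the proposal is a plan, not a proof.
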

\begin{proof}
For every $k\in\ZZZ$ we define $R_k=R_{k\modm{p}}$.
Note that $\lim_{n\to\infty} \varphi^n(t)=0$ for every $t<1$.
Thus we can fix a two-sided sequence $(z_m)_{m\in\ZZZ}$ from $R$ such that, for every $m\in\ZZZ$,
\begin{equation}\label{EQ:tau-0-rm-choice}
    z_m\in R_m, \qquad
    \varphi(z_m)<z_{m-1}<z_m, \qquad
    \lim_{m\to -\infty} z_m=0
    \qquad\text{and}\qquad
    \lim_{m\to \infty} z_m=1.
\end{equation}
For every $m\in\ZZZ$ put
\begin{equation}\label{EQ:tau-0-rm}
 \xi(z_m) = z_{m+1}.
\end{equation}
Put $S=R\setminus\{z_m:\ m\in \ZZZ\}=\{s_1,s_2,\dots\}$, $K^{(0)}=\emptyset$
and fix a sequence $(\eps_j)_{j\in\NNN}$ of positive reals decreasing to zero.

We construct $\xi$ by induction.
Assume that $j\ge 1$ and that $\xi$ has been defined for every $s_k$ with $k\in K^{(j-1)}$, where $\{s_k: k\in K^{(j-1)}\}$ is either
empty or has a unique accumulation point $1$.
Let $k_{j,0}$ be the smallest integer from (an infinite set) $\NNN\setminus K^{(j-1)}$.
Find integers $m_j\in\ZZZ$ and $l_j\ge 1$ such that
\begin{enumerate}
  \item[($A_j$)] $\varphi^{l_j}(s_{k_{j,0}}) < z_{m_j} < \eps_j$ and, if $j>1$, $m_j<m_{j-1}$;
  \item[($B_j$)] $z_{m_j}\in R_{c_j+l_j+1}$, where $c_j<p$ is such that $s_{k_{j,0}}\in R_{c_j}$.
\end{enumerate}
Since every $R_c$ is countable dense in $(0,1)$ and $\varphi$ is continuous with $\varphi(1)=1$ and $\varphi(t)<t$ for $t\in (0,1)$, there are
pairwise distinct integers $k_{j,i}$ ($-\infty<i\le l_j$, $i\ne 0$) from $\NNN\setminus (K^{(j-1)}\cup\{k_{j,0}\})$ such that
\begin{enumerate}
  \item[($C_j$)] $\dots > s_{k_{j,-2}} > s_{k_{j,-1}} > s_{k_{j,0}} > s_{k_{j,1}} > \dots  > s_{k_{j,l_j}} \ < z_{m_j}$;
  \item[($D_j$)] $s_{k_{j,i+1}} \in (\varphi(s_{k_{j,i}}), \varphi(s_{k_{j,i}})+\eps_j)\cap R_{c_j+i+1}$ for every $-\infty<i<l_j$;
  \item[($E_j$)] $\lim_{i\to -\infty} s_{k_{j,i}}=1$; moreover, for every sufficiently large (positive) $m$,
                 $[z_m,z_{m+1})$ contains at least $p$ points $s_{k_{j,i}}$.
\end{enumerate}
Define
\begin{equation}\label{EQ:tau-0-sj}
 \xi(s_{k_{j,i}}) = s_{k_{j,i+1}} \quad \text{for } -\infty<i<l_j,\qquad
 \xi(s_{k_{j,l_j}})=z_{m_j}
\end{equation}
and put $K^{(j)}=K^{(j-1)}\cup\{k_{j,i}:\ -\infty < i\le l_j\}$.

After finishing the induction we obtain a map $\xi$ defined on $R$. We prove that $\xi$ has the required properties.

(a) This follows from the facts that every $r\in R\setminus\{z_{m_1},z_{m_2},\dots\}$ has exactly one preimage and every $z_{m_j}$ has exactly
two preimages (use ($A_j$)).

(b) By ($D_j$), ($B_j$) and (\ref{EQ:tau-0-rm-choice}), $\xi(R_c)\subseteq R_{c+1}$ for every $c<p$.
By the choice of integers $k_{j,0}$, the equalities hold.

(c) Fix $r\in R$. If $r=z_m$ for some $m$ then $\xi(r)=z_{m+1}>r>\varphi(r)$ by (\ref{EQ:tau-0-rm}) and (\ref{EQ:tau-0-rm-choice}).
If $r=s_{k_{j,l_j}}$ for some $j\ge 1$, then $\xi(r)=z_{m_j}>r>\varphi(r)$ by ($C_j$)
and (\ref{EQ:tau-0-sj}). Otherwise $r=s_{k_{j,i}}$ for some $j\ge 1$
and $i<l_j$; then $\xi(r)=s_{k_{j,i+1}}>\varphi(r)$ by ($D_j$) and  (\ref{EQ:tau-0-sj}). Hence $\xi(r)>\varphi(r)$ for every $r\in R$.
Now we prove that $\lim_{r\in R} (\xi(r)- \varphi(r))=0$. To this end, fix $\eps>0$ and put $T_\eps=\{r\in R:\ \xi(r)-\varphi(r)>\eps\}$.
By (\ref{EQ:tau-0-rm-choice}), continuity of $\varphi$ and the fact that $0,1$ are fixed points of $\varphi$,
the intersection $T_\eps\cap\{z_m:m\in\ZZZ\}$ is finite.
Since $\eps_j\searrow 0$, there is $j_0$ with $\eps_{j_0}\le \eps$. Then $T_\eps\cap S \subseteq \{s_k:\ k\in K^{(j_0)}\}$ by ($D_j$) for $j>j_0$.
Now ($E_j$) for $j\le j_0$,
$\varphi(1)=1$ and continuity of $\varphi$ at $1$ give that $T_\eps\cap S$ is finite. Thus $T_\eps$ is finite and (c) is proved.

(d) This immediately follows from (\ref{EQ:tau-0-rm-choice}) and (\ref{EQ:tau-0-rm}).

(e) If $r=z_m$ for some $m$ then put $n_r=0$, $m_r=m$. Otherwise there are $j$ and $i\le l_j$ with $r=s_{k_{j,i}}$; in such a case put
$n_r=l_j-i+1$, $m_r=m_j$ and use (\ref{EQ:tau-0-sj}).

(f) The first part of (f) follows from (d) and (e). The second part is implied by
 $\lim_{j\to\infty} m_j= -\infty$ (use ($A_j$)) and $\lim_{i\to -\infty} s_{k_{j,i}}=1$ for every $j$ (use ($E_j$)).

(g) Take any $c,d<p$, $r\in R_c$ and $s\in R$. By (f) there is a backward trajectory $(t_{-l})_{l\ge 0}$ of $s$ with $\lim_l t_{-l}=1$.
By (\ref{EQ:tau-0-rm}), the second part of ($E_j$) and (b),
there are $h$ and $m\ge m_r$ with $t_{-h}\in [z_m,z_{m+1})\cap R_{d}$. If we put $k=n_r + (m-m_r)$,
$r'=\xi^k(r)$ and $t=t_{-h}$, then
$r'=z_m$, $s=\xi^h(t)$ and $(\varphi(r'),\xi(r'))\supseteq [z_m,z_{m+1})\ni t$.
\end{proof}

\begin{lemma}\label{L:tau-12}
 Let $I=[a,b]$, $I'=[a',b']$ be non-degenerate compact intervals.
 Let $C\ne\emptyset$ be a countable index set and
 $\{R_c\}_{c\in C},\{R'_c\}_{c\in C}$ be families of dense countable
 subsets of $I,I'$ such that the sets $R_c$ are pairwise disjoint.
 Let $L>\abs{I'}/\abs{I}$.
 Then there is a map $\tau:I\to I'$ such that
 \begin{enumerate}
   \item[(a)] $\tau$ is a Lipschitz-$L$ surjection; moreover, if the sets $R'_c$ are pairwise disjoint, $\tau$ is a homeomorphism;
   \item[(b)] $\tau(a)=a'$, $\tau(b)=b'$, $\tau((a,b)) = (a',b')$;
   \item[(c)] $\tau(R_c)=R'_{c}$ for every $c$;
   \item[(d)] if $C$ is finite then, for every $c$, $\tau^{-1}(R_c')\subseteq \bigcup_d R_d$ and
     every $r'\in R_c'$ has only finitely many preimages;
   \item[(e)] $\tau(t)\le(\abs{I'}/\abs{I}) (t-a)+a'$ for every $t\in I$.
 \end{enumerate}
\end{lemma}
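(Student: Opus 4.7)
The plan is a back-and-forth construction. Set $\alpha=\abs{I'}/\abs{I}$, fix $L'\in(\alpha,L)$, and enumerate $R:=\bigcup_c R_c=\{r_n\}_{n\ge1}$ and $R':=\bigcup_c R'_c=\{r'_n\}_{n\ge1}$, recording for each element its class in $C$. I shall build a sequence $\tau_0,\tau_1,\tau_2,\dots$ of continuous piecewise-linear maps $I\to I'$, all with $\tau_n(a)=a'$ and $\tau_n(b)=b'$, all Lipschitz-$L'$, all satisfying the diagonal bound (e), with each $\tau_n$ obtained from $\tau_{n-1}$ by a modification supported on an arbitrarily short sub-interval. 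I take $\tau_0$ to be the affine surjection $t\mapsto\alpha(t-a)+a'$, whose graph is the diagonal of $I\times I'$.

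At odd stage $2k-1$ I process $r_k$. If $r_k$ is not already a breakpoint of $\tau_{2k-2}$, I insert a V-shaped dip centered at $r_k$: by density of $R'_{c(r_k)}$, I pick a value $y\in R'_{c(r_k)}$ arbitrarily close to and strictly below the current value $\tau_{2k-2}(r_k)$, set $\tau_{2k-1}(r_k):=y$, and extend linearly to two nearby neighboring breakpoints; the slack $L'-\alpha>0$ together with $y$ close to $\tau_{2k-2}(r_k)$ keeps the new slopes below $L'$, and the dip stays below the diagonal, preserving (e). At even stage $2k$ I process $r'_k$: I locate an interval where the current $\tau$ has enough vertical slack below the diagonal to fit a V whose bottom equals $r'_k$, then choose the bottom point to be some $x\in R_{c(r'_k)}$ (possible by density of $R_{c(r'_k)}$ and the freedom afforded by $L'>\alpha$). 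To be sure that enough room is always available, I also reserve at each stage a short interval $J_n\subseteq I$ on which $\tau_n$ still coincides with $\tau_0$, so the entire vertical strip below the diagonal is available there.

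Since each modification has arbitrarily small support, $(\tau_n)$ converges uniformly to a continuous limit $\tau\colon I\to I'$. The limit is Lipschitz-$L'\le L$, still satisfies (e), and sends $a,b$ to $a',b'$ respectively. By construction $\tau(r)\in R'_{c(r)}$ for every $r\in R$, and every $r'\in R'_c$ is attained by $\tau$ at some point of $R_c$; this yields (c). Surjectivity in (a) then follows because $\tau(I)\supseteq\bigcup_c R'_c$ is dense and $\tau(I)$ is closed. Property (b) is obtained by requiring during the construction that every breakpoint value lie in $(a',b')$, so $\tau^{-1}(a')=\{a\}$ and $\tau^{-1}(b')=\{b\}$. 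If the $R'_c$ are pairwise disjoint, I replace V-dip insertions by order-preserving breakpoint insertions and use ordered back-and-forth on $R$ and $R'$; the resulting $\tau$ is a strictly increasing Lipschitz bijection, hence a homeomorphism.

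The delicate point is the interaction between the even stage and property (d). For (d) in the finite-$C$ case, each $r'\in R'_c$ must be attained by $\tau$ only at the finitely many breakpoints explicitly created for it, and all preimages of any $R'$-point must lie in $\bigcup_d R_d$. I would enforce this by adding, at each insertion, the further constraint that the new V avoids all previously-introduced horizontal levels $\{y=r'\}$, $r'\in R'$, except at the intended single crossing; since at each stage only countably many forbidden levels must be avoided and the admissible bottom values form a dense subset of an interval, a countable-avoidance argument furnishes such a choice. Carrying out this avoidance simultaneously with the Lipschitz and below-diagonal constraints, and making sure the reserved intervals $J_n$ really do accommodate the deep bumps required at future even stages, is the bookkeeping core of the proof.
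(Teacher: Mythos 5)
Your overall strategy --- back-and-forth enumeration of $R$ and $R'$, a uniformly convergent sequence of piecewise-linear Lipschitz approximants pinned at the endpoints and kept under the diagonal --- is the same as the paper's. But two of the ingredients in your execution do not hold up.

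The even-stage mechanism is incompatible with your Lipschitz and small-support requirements. You claim every modification is supported on an arbitrarily short sub-interval, yet at even stages you propose to ``fit a V whose bottom equals $r'_k$'' inside a reserved interval on which the map still equals $\tau_0$, so that ``the entire vertical strip below the diagonal is available.'' A V-dip of depth $d$ supported on an interval of width $\delta$ forces slopes of magnitude at least on the order of $2d/\delta$; to stay Lipschitz-$L'$ you need $\delta\gtrsim 2d/L'$, so the support cannot be small when the dip is deep, and only a thin band of height $\approx L'\delta/2$ under the diagonal is actually reachable from a reserved interval of width $\delta$. The device that works --- and that the paper packages into Lemma~\ref{L:tau-12-step}(a) --- is a \emph{shallow nudge}: the current piecewise-linear surjection already attains the value $r'_k$ somewhere; pick a fresh point $x\in R_{c(r'_k)}$ nearby, where the current value is within $\eps$ of $r'_k$, and pin $\tau(x)=r'_k$. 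Then $\|\tau_{\mathrm{new}}-\tau_{\mathrm{old}}\|<\eps$ and, with $\sum\eps_j<\infty$, uniform convergence is immediate; no reserved intervals are needed.

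Property (d) is not actually proved. Your plan --- at each insertion, avoid the previously introduced horizontal levels $\{y=r'\}$ except at the designated breakpoint --- controls only the finite stages. In the limit, $\tau_n(p)$ may converge to $r'$ from one side at a point $p$ that is never a breakpoint and lies in none of the $R_d$, giving $\tau(p)=r'$; finite-stage level-avoidance does not rule this out. You acknowledge this is ``the bookkeeping core'' but do not supply it. The paper closes the gap with the structural invariant of \emph{strong} $K$-linearity: on every open $K_j$-interval where $\tau_j$ is non-constant, $\tau_j$ avoids the whole set $\tau_j(K_j)$ of breakpoint values. If the limit then had a non-breakpoint preimage of $r'\in R'_c$, each $\tau_j$ would have to be constant on the $K_j$-interval containing that point, and these intervals nest down to a non-degenerate common interval of constancy of $\tau$, an impossibility. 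Some invariant of this precision is needed; ``countable avoidance'' at finite stages is not enough.
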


The map $\tau$ from Lemma~\ref{L:tau-12} will be the limit of a sequence of piecewise linear maps,
obtained using the following lemma. We say that a map $g:I\to I'$ (where $I=[a,b],I'=[a',b']$
are compact intervals) is \emph{piecewise linear} with respect to the set $K=\{a_0=a<a_1<\dots<a_k=b\}$,
or \emph{$K$-linear}, if the restrictions $g|_{[a_{i},a_{i+1}]}$ ($0\le i<k$) are linear
(thus $g$ is necessarily continuous);
the intervals $[a_{i}, a_{i+1}]$ will be called \emph{$K$-intervals}.
If $g$ additionally satisfies $g((a_i,a_{i+1}))\cap g(K)=\emptyset$ for every $i<k$ such that
$g$ is not constant on $[a_i,a_{i+1}]$ (that is, $g(a_i)\ne g(a_{i+1})$), then $g$ is said to be
\emph{strongly piecewise linear} with respect to $K$, or \emph{strongly $K$-linear}.
Note that if a $K$-linear map $g$ is strictly monotone then it is strongly $K$-linear.
For a $K$-linear map $g:I\to I'$ and points $r_1,\dots,r_k\in I\setminus K$, $r'\in I'$
let $g^{[r_1,\dots,r_k;r']}:I\to I'$ denote the $\tilde{K}$-linear map (where $\tilde{K}=K\cup\{r_1,\dots,r_k\}$)
which maps every $r_i$ to $r'$ and every $x\in K$ to $g(x)$.
By $\|g\|$ we denote the supremum norm of $g$.

\begin{lemma}\label{L:tau-12-step}
Let $g:[0,1]\to [0,1]$ be a strongly $K$-linear surjection with $\Lip(g)<L$
and $0<g(t)<t$ on $(0,1)$.
Let $R,R'$ be countable dense subsets of $(0,1)$ such that $R\cap K=\emptyset$ and let $\eps>0$.
Then
\begin{enumerate}
  \item[(a)] for every $r'\in R'$ there are $k\ge 1$ and $r_1,\dots,r_k\in R$, and
  \item[(b)] for every $r_1\in R$ there are $r'\in R'$, $k\ge 1$ and $r_2,\dots,r_k\in R\setminus\{r_1\}$,
\end{enumerate}
such that the map $\tilde{g}=g^{[r_1,\dots,r_k;r']}$ is a strongly $\tilde{K}$-linear surjection (where
$\tilde{K}=K\cup\{r_1,\dots,r_k\}$) and
\begin{equation}\label{EQ:tau-12-step}
    \|\tilde{g}- g\|<\eps, \qquad
    \Lip(\tilde{g})<L
    \qquad\text{and}\qquad
    0<\tilde{g}(t)<t \text{ on } (0,1).
\end{equation}
Moreover, if $g$ is a homeomorphism and, in (a), $r'\not\in g(K)$, then in both cases
we may additionally assume that $k=1$ and $\tilde{g}$ is a homeomorphism.
\end{lemma}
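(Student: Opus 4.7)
The plan is to prove both (a) and (b) by a local perturbation argument: each new vertex $r_i$ will be placed in $R$ close to a point where the original $g$ already attains the value $r'$, so that $\tilde g$ differs from $g$ only by a controlled amount on each affected $K$-interval. Since $g$ is piecewise linear with $0<g(t)<t$, the preimage $g^{-1}(r')$ is a finite set for every $r'\in(0,1)$, consisting of at most one point in each open $K$-interval together with those vertices of $K$ where $g$ takes the value $r'$. The delicate constraint is strong $\tilde K$-linearity: \emph{every} open $K$-interval whose interior contains a preimage of $r'$ must receive at least one new vertex $r_i$, for otherwise the unchanged piece of $\tilde g$ would have open image containing $r'\in\tilde g(\tilde K)$.

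For (a), given $r'\in R'$, I enumerate the preimages $g^{-1}(r')=\{x_1,\dots,x_n\}$ (non-empty by surjectivity of $g$). For each $x_i$ in the interior of a $K$-interval I pick $r_i\in R$ in an arbitrarily small neighborhood of $x_i$, using density of $R$ in $(0,1)$ together with $R\cap K=\emptyset$; for each $x_i\in K$ (which occurs iff $r'\in g(K)$) I pick $r_i\in R$ slightly to one side of $x_i$, introducing a tiny constant sub-piece which is harmless because the strong-linearity condition constrains only non-constant pieces. Setting $\tilde g=g^{[r_1,\dots,r_k;r']}$ and letting the $r_i$'s tend to the respective $x_i$'s, all four bounds in (\ref{EQ:tau-12-step}) become open conditions and are simultaneously achieved: the sub-slopes converge to the original slopes (each below $L$), giving $\Lip(\tilde g)<L$; the sup-norm difference tends to $0$; surjectivity persists because $r'$ already lay inside the original image of each modified linear piece; and $\tilde g(t)<t$ survives because the strict inequality has uniform slack on any compact subinterval of $(0,1)$ while the two boundary linear pieces inherit the inequality by continuity of their slopes in $r_i$. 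Strong $\tilde K$-linearity then holds because each new open piece has image contained in the corresponding original open image (hence disjoint from $g(K)$) while not containing $r'$ (which appears only at the new vertices).

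For (b), given $r_1\in R$, I first choose $r'\in R'$ close to $g(r_1)$ (by density of $R'$, noting $g(r_1)\in(0,1)$), and then insert $r_2,\dots,r_k\in R\setminus\{r_1\}$ near the remaining preimages of $r'$ lying in $K$-intervals distinct from the one containing $r_1$; the verification is identical to (a). For the ``moreover'' clause, if $g$ is a homeomorphism then $g$ is strictly increasing (since $g(0)=0$, $g(1)=1$), so $g^{-1}(r')$ is a singleton for every $r'\in(0,1)$. Under the hypothesis $r'\notin g(K)$ in (a) this unique preimage lies in an open $K$-interval, so a single $r_1$ close to it suffices and $\tilde g$ remains strictly increasing, hence a homeomorphism; the analogue in (b) follows by additionally requiring $r'\in R'$ to avoid the finite set $g(K)$, which is possible by density.

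The main obstacle is the simultaneous verification of all five properties under the perturbation: each bound is an open condition on the placement $(r_1,\dots,r_k)$ and each is satisfied in the joint limit $r_i\to x_i$, so their intersection is a non-empty open set meeting $R^k$ densely. The only subtlety requiring care is the boundary behavior of $\tilde g(t)<t$ near $t=0$ and $t=1$: this is handled by noting that modifying the first (resp.\ last) linear piece of $g$ changes its slope continuously in the placement of the inserted $r_i$, so the strict inequality at the slope level is preserved provided $r_i$ is taken close enough to $x_i$.
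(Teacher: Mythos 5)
Your approach is essentially the paper's: perturb $g$ by inserting new break points $r_1,\dots,r_k$ near suitable preimages of $r'$, observing that the Lipschitz bound, the sup-norm bound, the inequalities $0<\tilde g(t)<t$ on $(0,1)$ and surjectivity are all open conditions that hold in the limit, so the only delicate issue is arranging strong $\tilde K$-linearity. However, there is a genuine gap in your set-up. You assert that $g^{-1}(r')$ is always finite, ``consisting of at most one point in each open $K$-interval,'' and that every open $K$-interval whose interior meets $g^{-1}(r')$ must receive a new vertex. Both claims fail when $g$ has a constant piece whose value is exactly $r'$: then the whole closed $K$-interval lies in $g^{-1}(r')$, so the preimage is infinite and your enumeration $\{x_1,\dots,x_n\}$ is ill-defined; and yet no new vertex is needed (or can be finitely placed) on that piece, because the strong-linearity condition exempts constant pieces. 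Such constant pieces do arise in the inductive application of this lemma inside the proof of Lemma~\ref{L:tau-12}, since the sets $R_c'$ there need not be pairwise disjoint; indeed, every time $r'$ coincides with $g(a_i)$ for some vertex $a_i\in K$, the inserted break point creates a new constant sub-piece, and a later call of the lemma may well pick $r'$ equal to that constant value. The paper sidesteps this by bookkeeping not $g^{-1}(r')$ but the finite set $N$ of indices $i$ with $r'$ strictly between $g(a_i)$ and $g(a_{i+1})$, which automatically excludes constant pieces; new vertices are placed only in those intervals, plus (when $N=\emptyset$) a single $r_1$ next to a vertex with $g(a_i)=r'$. With that repair your argument coincides with the paper's, including your treatment of the homeomorphism case.
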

\begin{proof}
Note that if every $r_i$ is such that $g(r_i)$ is sufficiently close to $r'$,
then automatically $r_i>r'$ for every $i$ and (\ref{EQ:tau-12-step}) is satisfied.
Thus we only need to ensure that $\tilde{g}$ is
\emph{strongly} $\tilde K$-linear.
This can be done as follows. Let $K=\{a_0=0<a_1<\dots<a_m=1\}$ and $a_i'=g(a_i)$.
In (a), let $N$ denotes the set of those indices $0\le i<k$ for which $r'\in(a_i',a_{i+1}')$.
If $N$ is empty, there is $1\le i<k$ with $a_i'=r'$; put $k=1$ and take arbitrary
$r_1\in R\cap (a_i,a_{i+1})$ which is sufficiently close to $a_i$.
Otherwise $N=\{i_1,\dots,i_k\}$ is non-empty; then in every $(a_{i_j}, a_{i_j+1})$ take some
$r_j\in R$ for which $g(r_j)$ is sufficiently close to $r'$.

In (b), let $i$ be such that $r_1\in (a_i,a_{i+1})$.
If $a_i'\ne a_{i+1}'$
take some $r'\in R'\cap(a_i',a_{i+1}')$ which is sufficiently close to $g(r_1)$;
then, provided there are indices $j\ne i$ with $r'\in (a_j,a_{j+1})$,
in every such interval $(a_j,a_{j+1})$ choose some point from $R$ as in (a).
On the other hand, if $a_i'=a_{i+1}'$ (that is, $g$ is constant on $[a_i,a_{i+1}]$),
then $a_i'>0$ and we can
take $r'\in R'$, $r'<a_i'$ sufficiently close to $a_i'=g(r_1)$ and such that
$[r',a_i')\cap g(K)=\emptyset$; then  we define $k$ and $r_2,\dots,r_k$ as in the case
$a_i'\ne a_{i+1}'$.

In all the cases, the resulting map $\tilde g = g^{[r_1,\dots,r_k;r']}$
is a strongly $\tilde K$-linear surjection (with $\tilde K=K\cup \{r_1,\dots,r_k\}$) satisfying
(\ref{EQ:tau-12-step}).
To finish the proof it suffices to note that if $g$ is a homeomorphism and, in (a), $r'\not\in g(K)$,
then in the previous construction we always obtain that $k=1$ and
$\tilde{g}$ is strictly increasing.
\end{proof}

\begin{proof}[Proof of Lemma~\ref{L:tau-12}]
We may assume that $I=I'=[0,1]$.
Take a sequence $(\eps_j)_{j\ge 0}$ of positive reals such that $\sum_j \eps_j<\infty$.
Enumerate the elements of every $R_c$ and $R'_c$.
Let $(c_j)_{j\ge 0}$ be a sequence from $C$ containing every $c\in C$ infinitely many times.

The map $\tau$ will be the uniform limit of strongly $K_j$-linear maps $\tau_j:I\to I'$ ($j\ge 0$), where
$K_0\subseteq K_1\subseteq\dots$.
First,
take any $r\in R_{c_0}$ and, sufficiently close to it, $r'\in R_{c_0}'$ with $r'<r$. Put $K_0=\{a,r,b\}$ and
denote by $\tau_0:I\to I'$ the (strongly) $K_0$-linear surjection mapping $0,r,1$ to $0,r',1$, respectively.
Note that $\Lip(\tau_0)<L$ and $0<\tau_0(t)<t$ on $(0,1)$ by the choice of $r'$.
Put $S_{c_0,0}=\{r\}$, $S_{c_0,0}'=\{r'\}$ and, for every $c\ne c_0$, $S_{c,0}=S'_{c,0}=\emptyset$.

Assume now that $j\ge 1$ and that a
$K_{j-1}$-linear map $\tau_{j-1}$ with $\Lip(\tau_{j-1})<L$ has been defined, together
with finite sets $S_{d,j-1}\subseteq R_d$, $S'_{d,j-1}\subseteq R_d'$ ($d\in C$) such that
$K_{j-1}=\{0,1\}\sqcup\bigsqcup_d S_{d,j-1}$ and $\tau_{j-1}(K_{j-1}) = \{0,1\}\cup\bigcup_d S'_{d,j-1}$.
Put $c=c_j$ and define $S_{d,j}=S_{d,j-1}$, $S'_{d,j}=S'_{d,j-1}$ for every $d\in C\setminus\{c\}$.
Let $r$ be the first (according to the enumeration of $R_c$)
element of $R_c\setminus S_{c,j-1}$. Lemma~\ref{L:tau-12-step}(b), applied to $g=\tau_{j-1}$,
$K=K_{j-1}$, $R=R_c\setminus S_{c,j-1}$, $R'=R'_c\setminus S'_{c,j-1}$, $\eps=\eps_j$ and $r_1=r$,
gives a map $\tilde\tau_j=\tau_{j-1}^{[r_1,\dots,r_k;r']}$.
Let $s'$ be the first element of  $R_c'\setminus (S_{c,j-1}'\cup\{r'\})$. By applying Lemma~\ref{L:tau-12-step}(a)
to $g=\tilde\tau_j$, $K=K_{j-1}\cup\{r_1,\dots,r_k\}$, $R=R_c\setminus(S_{c,j-1}\cup\{r_1,\dots,r_k\})$,
$R'=R_c'\setminus (S_{c,j-1}'\cup\{r'\})$,
$\eps=\eps_j$ and $r'=s'$, we obtain
a map $\tau_j=\tilde\tau_j^{[s_1,\dots,s_l;s']}$.
To finish the induction step, we put
$S_{c,j}=S_{c,j-1}\cup\{r_1,\dots,r_k,s_1,\dots,s_l\}$, $S_{c,j}'=S_{c,j-1}'\cup\{r',s'\}$,
and $K_j=\{0,1\}\sqcup\bigsqcup_d S_{d,j}=K_{j-1}\sqcup\{r_1,\dots,r_k,s_1,\dots,s_l\}$.

For every $j$ we have $\|\tau_j-\tau_{j-1}\|<2\eps_j$   by (\ref{EQ:tau-12-step}).
Since $\sum_j\eps_j<\infty$, we may define $\tau=\lim_j \tau_j$.
Immediately $\tau(a)=a'$, $\tau(b)=b'$ and $\tau$ is continuous, thus it is also surjective.
Further, $\Lip(\tau)\le\limsup_j \Lip(\tau_j)\le L$ by (\ref{EQ:tau-12-step}).
If the sets $R_c'$ are pairwise disjoint, then, by Lemma~\ref{L:tau-12-step}
and the fact that $\tau_0$ is strictly increasing,
we may assume that every $\tau_j$ is strictly increasing; thus $\tau$ is increasing.
Since disjointness of the sets $R_c'$
and denseness of the sets $R_c$
immediately imply that $\tau$ has no interval of constancy, $\tau$ is a homeomorphism. Hence (a) and (b) are proved.

The property (c) follows from the construction since $R_c=\bigcup_j S_{c,j}$ and $R_c'=\bigcup_j S_{c,j}'$
for every $c\in C$.
To show (e) it suffices to realize that $\tau_j(t)<t$ for every $j$ and $t\in (0,1)$,
thus $\tau(t)\le t$ on $(0,1)$.

It remains to prove (d).
To this end, assume that $C$ is finite and take any $r'\in\bigcup_c R'_c$; put $C_0=\{c\in C: r'\in R_c'\}$.
Since $C_0$ is finite, there is $j_0$ such that $r'\in S'_{c,j}$ for every $c\in C_0,j\ge j_0$.
Assume, contrary to (d), that there is $r\in (0,1)\setminus\left( \bigcup_c R_c \right)$ with $\tau(r)=r'$.
Then for every $j\ge j_0$ there is a $K_j$-interval $[b_j,c_j]$ such that $r\in (b_j,c_j)$.
Strong $K_j$-linearity of $\tau_j$ and the fact $r'\in \tau_j((b_j,c_j))\cap g(K_j)$
give that $\tau_j$ is constant on $[b_j,c_j]$ and thus $b_j,c_j$ are $\tau_j$-preimages of $r'$.
Since, by the construction and finiteness of $C_0$, the set $\bigcup_j \tau_j^{-1}(r')$ is finite,
we obtain that the maps $\tau_j$ ($j\ge j_0$), and thus also $\tau$, have a common
non-degenerate interval of constancy. But this is clearly impossible.
So we have that $\tau^{-1}(r')\subseteq \bigcup_c R_c$. Since, by the construction,
$r'$ has only finitely many preimages in any $R_c$, the set $\tau^{-1}(r')$ is finite.
\end{proof}

\section{Construction of the map $F$}\label{S:constr}

The purpose of this section is to construct a dendrite $X$ and a transitive map
$F:X\to X$. Then, in Section~\ref{S:F-props},
we prove that this map has all the properties stated in Theorem~\ref{T:main}
but (c) and (f), since it has two fixed points and need not be Lipschitz.
We start with some notation and description of the phase space $X$, the universal
dendrite of order $3$.

\subsection{Basic notation}
By $Q$ we denote the set of all dyadic rational numbers in $(0,1)$.
Every $r\in Q$ can be uniquely written as $r=p_r/2^{q_r}$ with $q_r\ge 1$
and odd $1\le p_r<2^{q_r}$.
By $Q^0$, $Q^*$ and $Q^+$ we mean the sets $\{\emptyword\}$, $\bigcup_{k\ge 0} Q^k$ and
$\bigcup_{k\ge 1} Q^k$, respectively;
here and below, the symbol $\emptyword$ denotes the empty word.

By $\abs{\alpha}$ we denote the length of $\alpha\in Q^*$, that is, an integer $k$ such that $\alpha\in Q^k$.
For $\alpha\in Q^k, \beta\in Q^m$ ($k,m\ge 0$)
we naturally define the concatenation $\alpha\beta\in Q^{k+m}$;
note that $\alpha \emptyword = \emptyword\alpha=\alpha$.
If $\alpha=r_0\dots r_{k-2} r_{k-1}\in Q^k$ with $k\ge 1$,
then $\tilde\alpha$ denotes $r_0\dots r_{k-2}$ and,
for $\alpha=\emptyword$, we put $\tilde\alpha=\emptyword$.
Further, for every $0\le m\le k$ we put $\alpha^{(m)}=r_0\dots r_{m-2} r_{m-1}$
(with the interpretation $\alpha^{(0)}=\emptyword$);
so, for example, $\tilde{\alpha}=\alpha^{(\abs{\alpha}-1)}$ for $\alpha\in Q^+$.

Put $\Sigma=\{0,1\}$, $\Sigma^0=\{\emptyword\}$ and define $\Sigma^*$, $\Sigma^+$ analogously as above.
On $\Sigma^m$ ($m\ge 1$) we define
the addition in the usual way with carry to the right and we identify $1$ with the word
$10^{m-1}$; so $\gamma+n$ is defined for any $\gamma\in\Sigma^m$ and $n\in\ZZZ$.
Analogously for $\Sigma^\infty$.
The \emph{longest common prefix} of  $\gamma,\gamma'\in \Sigma^*$ is the word $\delta\in \Sigma^*$ such that
we can write $\gamma=\delta\overline{\gamma}$, $\gamma'=\delta\overline{\gamma}'$ with either one of
$\overline{\gamma},\overline{\gamma}'$ being empty or $\overline{\gamma}_0\ne\overline{\gamma}'_0$.

For $\gamma=\gamma_0\dots\gamma_{m-1}\in\Sigma^m$ ($m\ge 0$) we define
\begin{equation}\label{EQ:Q-gamma}
 Q_\gamma = \{\alpha=\alpha_0\dots\alpha_{m-1}\in Q^m:\ q_{\alpha_i}\equiv \gamma_i\modm{2} \quad \text{for } 0\le i<m \}.
\end{equation}
Then $Q_\emptyword=\{\emptyword\}$, $Q_\gamma=\prod_{i<m} Q_{\gamma_i}$ is dense in $(0,1)^m$ for every $\gamma\in\Sigma^m$ and, for every $m\ge 0$,
$$
 Q^m=\bigsqcup_{\gamma\in\Sigma^m} Q_\gamma.
$$
Analogously we define $Q_\gamma$ for $\gamma\in\Sigma^\infty$.

\medskip

\subsection{Dendrite $X$}
Let $X$ be the universal dendrite of order $3$, that is, the topologically unique dendrite
such that the branch points of it are dense and all have order $3$. We may write
$$
 X
 = \closure{\bigcup_{m\ge 0} X^{(m)}}
 = \bigcup_{m\ge 0} X^{(m)} \ \sqcup \ X^{(\infty)},
$$
where $X^{(0)}=[a_\emptyword,b_\emptyword]$ is an arc and every $X^{(m)}$ ($m\ge 1$) satisfies
$$
 X^{(m)} =
 X^{(m-1)} \sqcup
 \left(
 \bigsqcup_{\alpha\in Q^m} (a_\alpha,b_\alpha]
 \right)
 \qquad
 \text{with }a_\alpha\in (a_{\tilde\alpha},b_{\tilde\alpha})
 \text{ for } \alpha\in Q^m;
$$
moreover,
$$
 \Branch(X) =
   \{a_\alpha:\ \alpha\in Q^+\}
 \qquad\text{and}\qquad
 \End(X) =
   \{a_\emptyword, b_\emptyword\}
   \ \sqcup \ \{b_\alpha:\ \alpha\in Q^+\}
   \ \sqcup \ X^{(\infty)}.
$$
For every $\alpha\in Q^*$ we put $A_\alpha=[a_\alpha,b_\alpha]$ and
define a natural ordering on $A_\alpha$ such that
$a_\alpha<b_\alpha$; we may assume that $a_{\alpha r}<a_{\alpha s}$ for every $r<s$ from $Q$.
The ordinary points of $A_\alpha$ are denoted by $a_{\alpha t}$ ($t\in (0,1)\setminus Q$) in such a way that
for every $t<u$ from $(0,1)$
we have $a_{\alpha}<a_{\alpha t}<a_{\alpha u}<b_\alpha$; we also put $a_{\alpha 0}=a_\alpha$, $a_{\alpha 1}=b_\alpha$.
Note that, for every $m\ge 0$, $X^{(m)}$ is a nowhere dense subdendrite of both $X^{(m+1)}$ and $X$,
and $\bigcup_m X^{(m)}$ is
a dense connected subset of $X$ of the first category. Further,
$X^{(\infty)}$ is a totally disconnected dense $G_\delta$ subset of $X$.

For $\alpha\in Q^*$ let $X_\alpha$ denote the closure of the component of $X\setminus\{a_\alpha\}$
containing $b_\alpha$. So $X_\emptyword=X$ and, for every $\alpha\in Q^*$ and $m\ge 1$,
\begin{equation}\label{EQ:X-alpha}
  X_\alpha=
     \left(  \bigcup_{0\le\abs{\beta}<m} A_{\alpha\beta}   \right)
     \cup
     \left(  \bigsqcup_{\beta\in Q^m} X_{\alpha\beta}   \right)
  =
     \left(  \bigcup_{\beta\in Q^*} A_{\alpha\beta}   \right)
     \cup
     \{ b_{\alpha\beta}:\ \beta\in Q^\infty  \},
\end{equation}
where, for any $\nu=\nu_0\nu_1\ldots \in Q^\infty$, $b_\nu$
denotes the unique point of $\bigcap_m X_{\nu_0\dots\nu_{m-1}}$.
Note that
$$
  X^{(\infty)}=\{b_\alpha:\ \alpha\in Q^\infty\}.
$$

\subsection{Covers $\DDd_m$ of $X$}
Here we define a refining sequence
of finite covers $\DDd_m=(D_\gamma)_{\gamma\in\Sigma^m}$ ($m\ge 0$) of $X$.
For any $m\ge 0$ and $\gamma=\gamma_0\dots\gamma_{m-1}\in\Sigma^m$ put
(recall the definition (\ref{EQ:Q-gamma}) of $Q_\gamma$)
$$
 Y^{(\gamma)}
 = \bigcup_{k=0}^m
 \left(
   \bigsqcup_{\alpha\in Q_{\gamma_0\dots\gamma_{k-1}}} A_\alpha
 \right)
 = A_\emptyword
   \sqcup
   \left(
     \bigsqcup_{k=1}^m \bigsqcup_{\alpha\in Q_{\gamma_0\dots\gamma_{k-1}}} (a_\alpha,b_\alpha]
   \right);
$$
this is a subdendrite of $X^{(m)}$ and $Y^{(\emptyword)}=A_\emptyword$,
$Y^{(\gamma c)} = Y^{(\gamma)} \cup \left( \bigsqcup_{\alpha\in Q_{\gamma c}} A_\alpha \right)$
for every $\gamma\in\Sigma^*$, $c\in\Sigma$.
Now define
\begin{equation}\label{EQ:RPD-Dgamma-def}
 D_{\gamma} = Y^{(\tilde\gamma)} \cup \bigsqcup_{\alpha\in Q_\gamma} X_\alpha
\end{equation}
and
\begin{equation}\label{EQ:RPD-Dm-def}
 \DDd_m=(D_{\gamma})_{\gamma\in\Sigma^m}.
\end{equation}
Particularly, for $c,d\in\Sigma$ we have
$$
 D_\emptyword=X, \qquad
 D_c=A_\emptyword \cup
   \left(
     \bigsqcup_{r\in Q_{c}} X_r
   \right), \qquad
 D_{cd}=A_\emptyword
   \cup
   \left(
     \bigsqcup_{r\in Q_{c}} A_r
   \right)
   \cup
   \left(
     \bigsqcup_{rs\in Q_{cd}} X_{rs}
   \right).
$$
The following lemma, the easy proof of which is skipped, summarizes properties of
the covers $\DDd_m$.

\begin{lemma}\label{L:def-RPD}
 For every $m,k\ge 1$ and different $\gamma,\delta\in\Sigma^m$ the following hold:
 \begin{enumerate}
   \item[(a)] $D_\gamma$ is a regular closed subdendrite with
     $$
      \interior{D_\gamma} = \bigsqcup_{\alpha\in Q_\gamma} (X_\alpha\setminus\{a_\alpha\})
      \qquad\text{and}\qquad
      \boundary{D_\gamma} = Y^{(\tilde\gamma)};
     $$
   \item[(b)] $D_\gamma=\bigcup_{\gamma'\in \Sigma^k} D_{\gamma\gamma'}$;
   \item[(c)] $D_\gamma\cap D_{\delta} = Y^{(\tilde\gamma)}\cap Y^{(\tilde{\delta})} = Y^{(\epsilon)}$,
     where $\epsilon$ is the longest common
     prefix of $\tilde\gamma,\tilde{\delta}$;
   \item[(d)] $\DDd_{m+k}$ refines $\DDd_m$;
   \item[(e)] $\bigcup_{\gamma\ne\delta} (D_\gamma\cap D_{\delta}) = X^{(m-1)}$
     and $\bigcap_{\gamma\in\Sigma^m} D_\gamma = X^{(0)}$.
 \end{enumerate}
\end{lemma}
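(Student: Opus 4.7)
The proof is pure bookkeeping of the combinatorial structure relating the parity classes $Q_\gamma$ to the nested tree of arcs $A_\alpha$. The plan is to first establish two disjointness observations about the pieces $X_\alpha$, then prove (c) (which carries the main combinatorial content), and finally derive (a), (b), (d), (e) from (c) together with the definitions.

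The two observations are: (i) for distinct $\alpha,\alpha'\in Q^+$ of the same length, $X_\alpha\cap X_{\alpha'}=\emptyset$, proved by induction on $|\alpha|$ (same parent case uses $a_\alpha\ne a_{\alpha'}$ in the interior of $A_{\tilde\alpha}$; different parents reduces to the previous level); and (ii) for $|\alpha|=m$, the set $X_\alpha$ meets $X^{(m-1)}$ only at the branch point $a_\alpha$, which lies in the interior of $A_{\tilde\alpha}$. Using (i) and (ii), (c) goes as follows. Let $\epsilon$ be the longest common prefix of $\tilde\gamma,\tilde\delta$. The four cross-terms in $D_\gamma\cap D_\delta$ are handled separately: the tail-tail intersection $\bigsqcup_{\alpha\in Q_\gamma}X_\alpha \cap \bigsqcup_{\alpha'\in Q_\delta}X_{\alpha'}$ vanishes by (i); each tail-stem intersection $Y^{(\tilde\gamma)}\cap X_{\alpha'}$ is by (ii) contained in $\{a_{\alpha'}\}$, and $a_{\alpha'}\in Y^{(\tilde\gamma)}$ forces $\widetilde{\alpha'}\in Q_{\tilde\gamma}$ which together with $\widetilde{\alpha'}\in Q_{\tilde\delta}$ forces $\tilde\gamma=\tilde\delta$, in which case $a_{\alpha'}\in Y^{(\epsilon)}=Y^{(\tilde\gamma)}$ anyway; the stem-stem intersection $Y^{(\tilde\gamma)}\cap Y^{(\tilde\delta)}$ consists exactly of those arcs $A_\eta$ whose parity prefix matches both $\tilde\gamma$ and $\tilde\delta$, and this is by definition $Y^{(\epsilon)}$.

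For (a), the inclusion $\bigsqcup_{\alpha\in Q_\gamma}(X_\alpha\setminus\{a_\alpha\})\subseteq\interior{D_\gamma}$ is immediate from the fact that any such point has a neighborhood inside $X_\alpha\subseteq D_\gamma$. The reverse inclusion uses that a point $x\in Y^{(\tilde\gamma)}\subseteq X^{(m-1)}$ has, in any neighborhood, infinitely many branch points $a_\beta$ with $\beta\notin Q_{\gamma_0\dots\gamma_{|\beta|-1}}$ (since parity classes are dense in every interval), so that $x$ cannot be interior. Since the $a_\alpha$ ($\alpha\in Q_\gamma$) are dense in every arc of $Y^{(\tilde\gamma)}$ and lie in the closure of the open set $\bigsqcup_\alpha(X_\alpha\setminus\{a_\alpha\})$, a short induction on depth shows $\closure{\interior{D_\gamma}}=D_\gamma$, establishing regular closedness. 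Property (b) is a straightforward decomposition: for each $\alpha\in Q_\gamma$, the subdendrite $X_\alpha$ splits as the arcs at depths $|\alpha|+1,\dots,|\alpha|+k-1$ plus the pieces $X_{\alpha\beta}$ for $\beta\in Q^k$, and regrouping by the $\Sigma^k$-class of $\beta$ yields precisely $\bigcup_{\gamma'\in\Sigma^k}D_{\gamma\gamma'}$. Then (d) follows at once: each $D_{\gamma\gamma'}\subseteq D_\gamma$.

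For (e), apply (c): as $\gamma\ne\delta$ range over $\Sigma^m$, their longest common prefix $\epsilon$ of $\tilde\gamma,\tilde\delta$ ranges over all elements of $\Sigma^i$ for $0\le i\le m-1$, and a short computation
\[
 \bigcup_{\epsilon\in\Sigma^i} Y^{(\epsilon)}
 \ = \ \bigcup_{k=0}^{i}\bigsqcup_{\alpha\in Q^k} A_\alpha
 \ = \ X^{(i)}
\]
gives $\bigcup_{\gamma\ne\delta}(D_\gamma\cap D_\delta)=X^{(m-1)}$. The second equality in (e) is immediate: the global intersection is contained in $D_{0^m}\cap D_{10^{m-1}}=Y^{(\emptyword)}=A_\emptyword=X^{(0)}$, while $A_\emptyword\subseteq D_\gamma$ for every $\gamma$ by construction. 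The main obstacle throughout is purely notational rather than conceptual — one must scrupulously track which arcs and which branch points are attached to which parity class, but once (c) is in hand the rest follows mechanically.
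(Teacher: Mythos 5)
The paper does not actually supply a proof of this lemma; the author writes ``the easy proof of which is skipped.'' So there is no official argument to compare against, and your job is to reconstruct it from scratch, which you do correctly: the key facts are exactly the disjointness observations about the branches $X_\alpha$ and the tree--parity bookkeeping that pins down the stem--stem intersection $Y^{(\tilde\gamma)}\cap Y^{(\tilde\delta)}=Y^{(\epsilon)}$, from which (a), (b), (d), (e) follow. Two small imprecisions are worth flagging. First, the claim that ``the $a_\alpha$ ($\alpha\in Q_\gamma$) are dense in every arc of $Y^{(\tilde\gamma)}$'' is false as stated: those branch points sit only on arcs $A_\mu$ with $\abs{\mu}=m-1$; on shallower arcs of $Y^{(\tilde\gamma)}$ one must first descend through intermediate branch points $a_{\mu r}$ with $r\in Q_{\gamma_k}$, using that small branches have small diameter. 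Your ``short induction on depth'' is exactly the repair, so the argument survives, but the intermediate sentence as written is wrong. Second, in showing that no point $x\in Y^{(\tilde\gamma)}$ is interior, the presence near $x$ of branch points $a_\beta$ with $\beta\notin Q_{\gamma_0\dots\gamma_{\abs{\beta}-1}}$ is not by itself enough, since such $a_\beta$ typically \emph{do} lie in $D_\gamma$ (they lie on $A_{\tilde\beta}\subseteq Y^{(\tilde\gamma)}$). What actually forces $x$ out of the interior is that the open branch $X_\beta\setminus\{a_\beta\}$ is disjoint from $D_\gamma$ (a consequence of your observations (i) and (ii) applied to $\beta$), and these open sets accumulate on $x$. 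You should also record explicitly that $D_\gamma$ is connected (each $X_\alpha$ is attached to the connected set $Y^{(\tilde\gamma)}$ at $a_\alpha$), so that it is indeed a subdendrite as claimed in (a). With those clarifications the proof is complete and is surely the intended one.
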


We identify words $\gamma\in\Sigma^m$ with integers $0\le k<2^m$ in such a way that
the word $0^m+k$ is identified with $k$. Thus it has a meaning to say that $\DDd_m$
is a regular periodic decomposition of some continuous selfmap of $X$.

\subsection{Definition of the map $F$}\label{SS:F-def}
In this section we inductively construct a selfmap $F$ of $X$
together with auxiliary maps $\tau_\alpha$ ($\alpha\in Q^*$).
Fix a sequence $(L_m)_{m\ge 1}$ of reals such that
\begin{equation}\label{EQ:Lm-def}
    L_m\searrow 1
    \qquad\text{and}\qquad
    \sum_m \left( \prod_{i\le m}L_i^{-1} \right) <\infty
\end{equation}
(take, for example, $L_m=(1+1/m)^2$).
For $\gamma\in \Sigma^+$ let $\last{\gamma}\in\Sigma$
denote the last letter of $(\gamma+1)$; that is, for $\gamma\in\Sigma^m$,
$\last\gamma=\gamma_{m-1}+1$ if $\gamma_i=1$ for every $i<m-1$, and $\last\gamma=\gamma_{m-1}$ otherwise.
We will often use that, for any $\gamma\in \Sigma^+$ and $c\in\Sigma$,
\begin{equation}\label{EQ:e-gamma}
  (\gamma+1) \last{\gamma c} = \gamma c+1
  \qquad\text{and}\qquad
  \last{\gamma 0}\ne \last{\gamma 1}.
\end{equation}
To shorten the notation we put $\last\alpha=\last\gamma$ for every $\alpha\in Q_\gamma$.

First we define $F$ on $X^{(0)}$. Take an
increasing homeomorphism $\varphi_\emptyword:[0,1]\to[0,1]$
such that $\varphi_\emptyword(t)<t$ for every $t\in (0,1)$ and $\varphi_\emptyword(Q_c)=Q_{c+1}$
for every $c\in \Sigma$; for example, one can use the map
$\varphi_\emptyword$ given by $\varphi_\emptyword(t)=t/2$ for $t\le 2/3$ and
$\varphi_\emptyword(t)=2t-1$ for $t\ge 2/3$. Now define $F$ on $X^{(0)}$ by
\begin{equation}\label{EQ:def-F-X0}
 F(a_t)=a_{\varphi_\emptyword(t)}
 \qquad\text{for every } t\in [0,1].
\end{equation}
Let $\tau_\emptyword:Q\to Q$ be the map $\xi$ obtained from Lemma~\ref{L:tau-0}
for $\varphi=\varphi_\emptyword$, $p=2$ and $R_c=Q_c$ ($c\in\Sigma$).

To define $F$ on $X^{(1)}$
we fix a number $\omega\in (0,1)\setminus Q$ and put
$$
 c_{r}=a_{r \omega}
 \qquad
 (r\in Q).
$$
Take any $r\in Q$ and put $J_r=[\varphi_\emptyword(r),\tau_\emptyword(r)]$, $s=\tau_\emptyword(r)$.
Let $d\in\Sigma$ be such that $r\in Q_d$.
Two applications of Lemma~\ref{L:tau-12}, first with
$I=[0,\omega]$, $I'=J_r$, $R_c=Q_c\cap (0,\omega)$, $R'_c=Q_{\last{d}}\cap (\varphi_\emptyword(r), s)$ ($c\in\Sigma$),
then with $I=[\omega,1]$, $I'=[0,1]$, $R_c=Q_c\cap (\omega,1)$, $R'_c=Q_{\last{dc}}$ ($c\in\Sigma$),
and arbitrary $L>1/\min\{\omega,1-\omega\}$,
give maps $\tau_r':[0,\omega]\to J_r$ and $\tau_r'':[\omega,1]\to [0,1]$.
We define $F$ on $A_r$ by
\begin{equation}\label{EQ:def-F-X1}
 F(a_{rt})=
 \begin{cases}
   a_{\tau_r'(t)} &\text{if } t\in [0,\omega];
 \\
   a_{s \tau_r''(t)} &\text{if } t\in [\omega,1].
 \end{cases}
\end{equation}
Since $\tau_r'(\omega)=s$, $\tau_r''(\omega)=0$ and $a_{s0}=a_s$, the map $F$ is well defined
on $A_r$. To shorten the notation, define a (not continuous) map $\tau_r:[0,1]\to[0,1]$
by $\tau_r(t)=\tau_r'(t)$ for $t\le \omega$ and $\tau_r(t)=\tau_r''(t)$ for $t> \omega$.

Now assume that $m\ge 2$ and $F$ is defined on $X^{(m-1)}$. Fix arbitrary
$\alpha=\alpha_0\dots\alpha_{m-1}\in Q^m$
and let $\gamma\in \Sigma^m$ be such that $\alpha\in Q_\gamma$.
Since $a_\alpha\in X^{(m-1)}$, $F(a_\alpha)$ has already been defined and
there is $\beta\in Q^*$ such that $F(a_\alpha)=a_\beta$.
If $\alpha_1<\omega$, take a map $\tau_\alpha:[0,1]\to[0,1]$
obtained from Lemma~\ref{L:tau-12} for $R_c=Q_c$, $R'_c=Q_{\last\gamma}$ ($c\in\Sigma$) and $L=L_m$.
Otherwise ($\alpha_1>\omega$) let $\tau_\alpha:[0,1]\to[0,1]$
be a map obtained from Lemma~\ref{L:tau-12} for $R_c=Q_c$, $R'_c=Q_{\last{\gamma c}}$
($c\in \Sigma$) and $L=L_m$.
Now we can define $F$ on $A_\alpha$ by
\begin{equation}\label{EQ:def-F-X2}
 F(a_{\alpha t})=
   a_{\beta \tau_\alpha(t)}
 \qquad\text{for every } t\in [0,1].
\end{equation}

In this way we have defined $F$ on $\bigcup_m X^{(m)}$. Before giving the definition of $F$ on
$X^{(\infty)}$ we summarize the crucial properties of the index maps $\tau_\alpha$, which
immediately follows from Lemmas~\ref{L:tau-0} and \ref{L:tau-12}.

\begin{lemma}\label{L:tau-alpha-props}
 Let $\gamma\in\Sigma^*$, $\alpha\in Q_\gamma$ and $c\in \Sigma$. Then
 \begin{enumerate}
   \item[(a)] $\tau_\alpha(Q_c)\subseteq Q_{\last\gamma}$,
      with equality if $\abs{\alpha}\ge 2$ and $\alpha_1<\omega$;
   \item[(b)] $\tau_\alpha(Q)=Q$ and $\tau_\alpha(Q_c)=Q_{\last{\gamma c}}$
      provided $\alpha=\emptyword$, or  $\abs{\alpha}\ge 2$ and $\alpha_1>\omega$;
   \item[(c1)] $\tau'_\alpha(Q_c\cap(0,\omega))
     = Q_{\last\gamma}\cap (\varphi_\emptyword(\alpha), \tau_\emptyword(\alpha))
     \subseteq Q_{\last\gamma}$ provided $\abs{\alpha}=1$.
   \item[(c2)]
    $\tau''_\alpha(Q\cap(\omega,1))=Q$ and
    $\tau''_\alpha(Q_c\cap(\omega,1))=Q_{\last{\gamma c}}$ provided $\abs{\alpha}=1$.
 \end{enumerate}
\end{lemma}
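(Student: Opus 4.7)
The plan is to unpack the definitions case by case and read off the conclusions from Lemmas~\ref{L:tau-0} and \ref{L:tau-12}. The only arithmetic input beyond those is the identity $\{\last{\gamma 0}, \last{\gamma 1}\} = \{0,1\}$ from (\ref{EQ:e-gamma}), which is what makes the two target classes $Q_{\last{\gamma 0}}, Q_{\last{\gamma 1}}$ tile $Q$.

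First I would dispatch the case $\alpha = \emptyword$ (so $\gamma = \emptyword$). Here $\tau_\emptyword$ is the map $\xi$ produced by Lemma~\ref{L:tau-0} with $p = 2$ and $R_c = Q_c$, so part (b) of that lemma directly gives $\tau_\emptyword(Q_c) = Q_{c+1 \bmod 2}$. A direct check from the definition of $\last{\cdot}$ yields $\last{c} = c + 1 \bmod 2$ for a single letter $c \in \Sigma$, hence $\tau_\emptyword(Q_c) = Q_{\last{\gamma c}}$; summing over $c$ gives $\tau_\emptyword(Q) = Q_0 \sqcup Q_1 = Q$. This proves (b) in this case.

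Next, for $|\alpha| = 1$, say $\alpha = r \in Q_d$, the two pieces $\tau'_r, \tau''_r$ come straight from Lemma~\ref{L:tau-12} with the $(R_c, R'_c)$ chosen in Section~\ref{SS:F-def}. Applying part (c) of that lemma to each construction yields
\[
  \tau'_r(Q_c \cap (0,\omega)) = Q_{\last{d}} \cap (\varphi_\emptyword(r), \tau_\emptyword(r))
  \qquad\text{and}\qquad
  \tau''_r(Q_c \cap (\omega,1)) = Q_{\last{dc}},
\]
which are exactly (c1) and the second equality of (c2). Taking the union in $c$ of the latter gives $\tau''_r(Q \cap (\omega,1)) = Q_{\last{d0}} \sqcup Q_{\last{d1}} = Q$ by the identity from (\ref{EQ:e-gamma}), finishing (c2).

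Finally, for $|\alpha| \geq 2$ with $\alpha \in Q_\gamma$, the construction produces $\tau_\alpha$ directly from Lemma~\ref{L:tau-12} with $R_c = Q_c$. When $\alpha_1 < \omega$ the choice is $R'_c = Q_{\last\gamma}$ (independent of $c$), so Lemma~\ref{L:tau-12}(c) gives $\tau_\alpha(Q_c) = Q_{\last\gamma}$; this is the equality clause in (a), and the inclusion follows trivially. When $\alpha_1 > \omega$ the choice is $R'_c = Q_{\last{\gamma c}}$, so Lemma~\ref{L:tau-12}(c) gives $\tau_\alpha(Q_c) = Q_{\last{\gamma c}}$ (the second assertion of (b)); the union over $c$ equals $Q$ by the same (\ref{EQ:e-gamma}) identity, completing (b). The entire argument is routine bookkeeping, with no substantive obstacle; the only care needed is to keep the two regimes ($\alpha_1 < \omega$ vs.\ $\alpha_1 > \omega$, and the two pieces $\tau'_r, \tau''_r$ on length-one arcs) straight and to apply the exact $(R_c, R'_c)$ pair from the definition in each case.
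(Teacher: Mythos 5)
Your proof is correct and is exactly the intended argument: the paper does not write out a proof but only remarks that the lemma "immediately follows from Lemmas~\ref{L:tau-0} and \ref{L:tau-12}", and your case-by-case unpacking of the $(R_c,R'_c)$ pairs from Section~\ref{SS:F-def} together with the identity $\last{\gamma 0}\ne\last{\gamma 1}$ from (\ref{EQ:e-gamma}) is precisely that routine verification.
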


By the construction, for every $\alpha\in Q^*$
there is (unique) $\varrho(\alpha)\in Q^*$ with
\begin{equation}\label{EQ:rho-def}
  F(b_\alpha)=b_{\varrho(\alpha)}.
\end{equation}
Moreover, (\ref{EQ:def-F-X0})--(\ref{EQ:def-F-X2}) imply that, for every $r,s\in Q$
and $\alpha\in Q^*$ with $\abs{\alpha}\ge 2$,
\begin{equation}\label{EQ:rho-props}
\begin{split}
  &\varrho(\emptyword)=\emptyword,\quad
  \varrho(r)=\tau_\emptyword(r),\quad
  \varrho(rs)=\begin{cases}
    \tau_r'(s)             &\text{if } s<\omega,
   \\
    \varrho(r)\tau_r''(s)  &\text{if } s>\omega
  \end{cases}
\\
  &\quad\text{and}\quad
  \varrho(\alpha s)=\varrho(\alpha)\tau_\alpha(s).
\end{split}
\end{equation}

\begin{lemma}\label{L:F-props-varrho} Let $\alpha\in Q^+$, $m=\abs{\alpha}$
and $\beta=\varrho(\alpha)$. Put $\theta_\alpha=1$ if $m\ge 2$ and $\alpha_1<\omega$, and
$\theta_\alpha=0$ otherwise.
Then
\begin{enumerate}
  \item[(a)] $\abs{\beta}=m-\theta_\alpha$;
  \item[(b)] $\beta_{i-\theta_\alpha} = \tau_{\alpha_0\dots\alpha_{i-1}}(\alpha_i)$
    for every $\theta_\alpha\le i<m$;
  \item[(c)] $\varrho(\alpha\alpha')$ starts with $\beta$ for every $\alpha'\in Q^*$;
  \item[(d)] the sequence $(\abs{\varrho^n(\alpha)})_{n\ge 0}$ is (not necessarily strictly) decreasing
    and is eventually $1$.
\end{enumerate}
\end{lemma}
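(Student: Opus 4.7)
The plan is to prove (a) and (b) jointly by induction on $m=|\alpha|$, deduce (c) by induction on $|\alpha'|$, and derive (d) from (a) and (b) together with an affine contraction estimate.

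For (a) and (b), the base case $m=1$ reads $\varrho(r)=\tau_\emptyword(r)$ directly from~(\ref{EQ:rho-props}), giving $|\beta|=1$ and $\beta_0=\tau_\emptyword(\alpha_0)$ with $\theta_\alpha=0$. The case $m=2$ splits by whether $\alpha_1\lessgtr\omega$: the two branches of the recursion produce, respectively, $\varrho(\alpha)=\tau'_r(\alpha_1)=\tau_r(\alpha_1)$ (length $1$, $\theta_\alpha=1$) and $\varrho(\alpha)=\tau_\emptyword(r)\,\tau''_r(\alpha_1)$ (length $2$, $\theta_\alpha=0$), both matching the claimed formulas. For $m\ge 3$, apply $\varrho(\alpha)=\varrho(\alpha^{(m-1)})\,\tau_{\alpha^{(m-1)}}(\alpha_{m-1})$, noting that the truncation $\alpha^{(m-1)}$ still has length $\ge 2$ and shares its second letter with $\alpha$; hence $\theta_{\alpha^{(m-1)}}=\theta_\alpha$, and the inductive hypothesis supplies both the length $m-\theta_\alpha$ and the indexing of the letters of $\beta$.

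For (c), I induct on $|\alpha'|$; the base case $\alpha'=\emptyword$ is trivial. In the inductive step, write $\alpha'=\alpha''s$: when $|\alpha\alpha''|\ge 2$, the recursion $\varrho(\alpha\alpha''s)=\varrho(\alpha\alpha'')\,\tau_{\alpha\alpha''}(s)$ preserves the prefix, and by the inductive hypothesis $\varrho(\alpha\alpha'')$ already starts with $\beta$, so the extension does too. The residual short cases ($|\alpha|+|\alpha''|<2$) are handled by direct inspection of the two branches of~(\ref{EQ:rho-props}) for $\varrho(rs)$, comparing with $\varrho(r)$.

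For (d), (a) gives $|\varrho^{n+1}(\alpha)|=|\varrho^n(\alpha)|-\theta_{\varrho^n(\alpha)}\le|\varrho^n(\alpha)|$, so the length sequence is non-increasing, and it is trapped at $1$ once it reaches that value since any length-$1$ word has $\theta=0$. The heart of the matter is ruling out stabilization at some $m\ge 2$: if $|\varrho^n(\alpha)|=m$ for all $n\ge N$, then $\theta_{\varrho^n(\alpha)}=0$, so the second letter $y_n=\varrho^n(\alpha)_1$ must lie in $(\omega,1)$. By (b) the evolution is $y_{n+1}=\tau''_{\varrho^n(\alpha)_0}(y_n)$, and Lemma~\ref{L:tau-12}(e) applied to $I=[\omega,1]$, $I'=[0,1]$ yields the affine bound $y_{n+1}\le(y_n-\omega)/(1-\omega)$. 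The associated affine map sends $\omega$ to $0$ and has $1$ as a repelling fixed point, so iterating drives $y_n$ below $\omega$ in finitely many steps, contradicting $y_n>\omega$. This is the main obstacle: I expect (a)--(c) to amount to essentially symbolic bookkeeping driven by~(\ref{EQ:rho-props}), whereas (d) genuinely requires the quantitative slope inequality from Lemma~\ref{L:tau-12}(e).
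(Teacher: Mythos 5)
Your approach to (d) is identical to the paper's: both hinge on the affine bound from Lemma~\ref{L:tau-12}(e) applied to $\tau''_r$ (with $I=[\omega,1]$, $I'=[0,1]$), showing that if the length of $\varrho^n(\alpha)$ stabilized at some $m\ge 2$, the second coordinate would be driven below $\omega$. The paper tracks $\alpha_1-\beta_1^n$ growing geometrically; you track $1-y_n$ via the repelling fixed point of $t\mapsto(t-\omega)/(1-\omega)$ — same inequality, different bookkeeping. For (a)--(c) the paper only says they ``immediately follow from (\ref{EQ:rho-props})''; your inductions are a correct unpacking of that, with the length/second-letter decomposition and the identity $\theta_{\alpha^{(m-1)}}=\theta_\alpha$ for $m\ge 3$.

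One point you should not leave as ``handled by direct inspection'': for (c) with $|\alpha|=1$, the inspection actually fails. If $\alpha=r$ and $s<\omega$, then by (\ref{EQ:rho-props}) one has $\varrho(rs)=\tau'_r(s)$, and by Lemma~\ref{L:tau-12}(b) this lies in the open interval $(\varphi_\emptyword(r),\tau_\emptyword(r))$, hence is a single letter \emph{strictly smaller} than $\beta=\varrho(r)=\tau_\emptyword(r)$. So $\varrho(rs)$ does not start with $\beta$, and (c) as stated is false for $|\alpha|=1$. This is a slight imprecision already present in the paper's lemma (whose proof is the single phrase ``immediately follow''), and it is harmless downstream — Lemma~\ref{L:F-props-varrho-infty} invokes (c) only through prefixes of length $\ge 2$, where your inductive step $\varrho(\alpha\alpha''s)=\varrho(\alpha\alpha'')\tau_{\alpha\alpha''}(s)$ is sound. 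But you should record the failure at $|\alpha|=1$ explicitly rather than asserting that the short case checks out.
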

\begin{proof}
The statements (a)--(c) immediately follow from (\ref{EQ:rho-props}).
The first part of (d) and the fact that $\abs{\varrho^n(\alpha)}\ge 1$ for every $n$ follow from (a).
To finish the proof it suffices to show that for every $\alpha$ with $\abs{\alpha}\ge 2$
there is $n\in\NNN$ such that $\abs{\varrho^n(\alpha)}<\abs{\alpha}$.
Suppose, to the contrary, that there is $\alpha$ with $m=\abs{\alpha}\ge 2$ such that
$\abs{\varrho^n(\alpha)}=m$ for every $n$. Put $\beta^n=\beta_0^n\dots\beta_{m-1}^n=\varrho^n(\alpha)$
($n\ge 0$).
Then $\beta_1^n>\omega$ and, by (\ref{EQ:rho-props}),  $\beta_1^{n+1}=\tau''_{\beta_0^n}(\beta_1^n)$
for every $n$.
Notice that, by Lemma~\ref{L:tau-12}(e),
$\tau''_r(t)\le (t-\omega)/(1-\omega)<t$ for every $t\in (0,1)$ and $r\in Q$.
Hence
\begin{equation*}
  \alpha_1-\beta_1^n
  > \frac{\alpha_1-\omega}{1-\omega}  -   \frac{\beta_1^{n-1}-\omega}{1-\omega}
  = \frac{\alpha_1-\beta_1^{n-1}}{1-\omega}
\end{equation*}
for every $n\ge 1$. Thus $\alpha_1-\beta_1^n > (1-\omega)^{-(n-1)} (\alpha_1-\beta_1^1)\to\infty$
for $n\to\infty$ (note that $\beta_1^1=\tau''_{\alpha_0}(\alpha_1)<\alpha_1$).
But this contradicts the fact that $\beta_1^n>\omega$ for every $n$.
\end{proof}

The next lemma, which trivially follows from Lemma~\ref{L:F-props-varrho}, enables us
to define $F$ on $X^{(\infty)}$.
\begin{lemma}\label{L:F-props-varrho-infty} Let $\alpha\in Q^\infty$ and
put $\theta_\alpha=1$ if $\alpha_1<\omega$ and
$\theta_\alpha=0$ if $\alpha_1>\omega$. Then there is exactly
one $\beta\in Q^*$, which will be denoted by $\varrho(\alpha)$, such that
\begin{equation*}
 \varrho(\alpha_0\dots \alpha_{m-1}) = \beta_0\dots\beta_{m-1-\theta_\alpha}
 \qquad\text{for every } m\ge 2.
\end{equation*}
Moreover, $\beta_{i-\theta_\alpha} = \tau_{\alpha_0\dots\alpha_{i-1}}(\alpha_i)$
for every $i\ge\theta_\alpha$.
\end{lemma}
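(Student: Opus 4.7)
The plan is to construct $\varrho(\alpha)$ as the common extension of the finite prefix words produced by Lemma~\ref{L:F-props-varrho} applied to the truncations $\alpha^{(m)}:=\alpha_0\dots\alpha_{m-1}\in Q^m$. First I would observe that, since $\omega\notin Q$, the letter $\alpha_1$ satisfies exactly one of $\alpha_1<\omega$ or $\alpha_1>\omega$, so the quantity $\theta_{\alpha^{(m)}}$ used in Lemma~\ref{L:F-props-varrho} depends (for $m\ge 2$) only on $\alpha_1$ and coincides with the $\theta_\alpha$ defined in the statement. Part (a) of that lemma then gives $\abs{\varrho(\alpha^{(m)})}=m-\theta_\alpha$, which tends to infinity as $m\to\infty$.

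Next, I would verify that the words $\varrho(\alpha^{(m)})$ form a compatible family of prefixes. For $2\le m\le m'$, Lemma~\ref{L:F-props-varrho}(c), applied with the decomposition $\alpha^{(m')}=\alpha^{(m)}\cdot(\alpha_m\dots\alpha_{m'-1})$, shows that $\varrho(\alpha^{(m)})$ is a prefix of $\varrho(\alpha^{(m')})$. Combined with the length computation, this produces a unique infinite word $\beta\in Q^\infty$ with
\[
 \varrho(\alpha^{(m)})=\beta_0\dots\beta_{m-1-\theta_\alpha}\qquad\text{for every }m\ge 2.
\]
Uniqueness of such a $\beta$ is immediate, since arbitrarily long prefixes are prescribed; setting $\varrho(\alpha):=\beta$ then establishes the main assertion.

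Finally, the explicit formula for the letters of $\beta$ is obtained by reading off the $(i-\theta_\alpha)$-th letter of $\varrho(\alpha^{(m)})$ via Lemma~\ref{L:F-props-varrho}(b): for each $i\ge\theta_\alpha$ choose any $m\ge\max(2,i+1)$ so that $\theta_\alpha\le i<m$; part (b) then yields $\beta_{i-\theta_\alpha}=\tau_{\alpha_0\dots\alpha_{i-1}}(\alpha_i)$, which is independent of the choice of $m$. I do not expect any serious obstacle: the entire argument is a routine passage to the limit once it is noticed that $\theta_{\alpha^{(m)}}$ stabilizes, and this is secured by $\omega\notin Q$; all of the nontrivial combinatorics has already been absorbed into Lemma~\ref{L:F-props-varrho}.
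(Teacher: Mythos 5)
Your proposal is correct and follows the same route the paper intends: the paper states the lemma "trivially follows from Lemma~\ref{L:F-props-varrho}," and you supply exactly the routine verification, using part (a) for the lengths, part (c) for prefix compatibility of the $\varrho(\alpha^{(m)})$, and part (b) for the letter formula. You also correctly identify that the limiting word $\beta$ lies in $Q^\infty$ (the statement's "$\beta\in Q^*$" is a small typo, since arbitrarily long prefixes are prescribed).
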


By Lemma~\ref{L:F-props-varrho-infty} we can define $F$ on $X^{(\infty)}$ by
\begin{equation}\label{EQ:def-F-Xinfty}
 F(b_{\alpha})=
   b_{\varrho(\alpha)}
 \qquad\text{for every } \alpha\in Q^\infty.
\end{equation}

\section{Properties of the map $F$}\label{S:F-props}
In this section we deal with the dynamical properties of the map $F$ constructed
in the previous section.

\subsection{Basic properties of $F$}\label{SS:F-props}
\begin{lemma}\label{L:F-props-Xm-alpha} Let $\alpha\in Q^*$ have $\abs{\alpha}\ge 2$.
Put $\beta=\varrho(\alpha)$. Then
\begin{enumerate}
  \item[(a)] $F(a_\alpha)=a_\beta$, $F(b_\alpha)=b_\beta$ and
    $F|_{A_\alpha}:A_\alpha\to A_\beta$ is a continuous surjection;
  \item[(b)] $F(X_\alpha)\subseteq X_\beta$.
\end{enumerate}
\end{lemma}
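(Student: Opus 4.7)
The plan is to read everything directly off the definition of $F$ in (\ref{EQ:def-F-X2}) together with the structural facts about $\varrho$ already established, and to decompose $X_\alpha$ via (\ref{EQ:X-alpha}) so that each piece can be handled separately.

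For part (a), I would first observe that by (\ref{EQ:def-F-X2}) the map $F$ on $A_\alpha$ is of the form $F(a_{\alpha t})=a_{\beta'\tau_\alpha(t)}$ for some $\beta'\in Q^*$ given by $F(a_\alpha)=a_{\beta'}$. Plugging in $t=0$ and $t=1$ and using $\tau_\alpha(0)=0$, $\tau_\alpha(1)=1$ (Lemma~\ref{L:tau-12}(b)) yields $F(a_\alpha)=a_{\beta'}$ and $F(b_\alpha)=b_{\beta'}$. Comparing the latter with the definition (\ref{EQ:rho-def}) of $\varrho$ and using that the end points $b_\eta$ ($\eta\in Q^*$) are pairwise distinct, we conclude $\beta'=\varrho(\alpha)=\beta$. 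Continuity of $F|_{A_\alpha}$ then follows from continuity of the parametrization $t\mapsto a_{\alpha t}$ and of $\tau_\alpha$, while surjectivity onto $A_\beta$ is immediate from surjectivity of $\tau_\alpha:[0,1]\to[0,1]$ given by Lemma~\ref{L:tau-12}(a).

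For part (b), I would decompose
\[
 X_\alpha=A_\alpha\cup\bigcup_{\delta\in Q^+} A_{\alpha\delta}\cup\{b_{\alpha\nu}:\nu\in Q^\infty\}
\]
using (\ref{EQ:X-alpha}), and check each piece separately. The arc $A_\alpha$ is handled by (a). For $\delta\in Q^+$ the word $\alpha\delta$ has length at least $2$, so applying (a) to $\alpha\delta$ yields $F(A_{\alpha\delta})=A_{\varrho(\alpha\delta)}$; by Lemma~\ref{L:F-props-varrho}(c), $\varrho(\alpha\delta)$ starts with $\beta$, whence $A_{\varrho(\alpha\delta)}\subseteq X_\beta$ by (\ref{EQ:X-alpha}). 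For the infinite tips, $F(b_{\alpha\nu})=b_{\varrho(\alpha\nu)}$ by (\ref{EQ:def-F-Xinfty}); since $|\alpha|\ge 2$, the first two letters of $\alpha\nu$ coincide with $\alpha_0,\alpha_1$, so $\theta_{\alpha\nu}=\theta_\alpha$, and Lemma~\ref{L:F-props-varrho-infty} together with Lemma~\ref{L:F-props-varrho}(c) gives that the infinite word $\varrho(\alpha\nu)$ still starts with $\beta$. Hence $b_{\varrho(\alpha\nu)}\in X_\beta$ by (\ref{EQ:X-alpha}) again.

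I do not expect any real obstacle here: the lemma is essentially a bookkeeping consequence of the inductive definition of $F$ and the recursive structure of $\varrho$ already catalogued in Lemmas~\ref{L:F-props-varrho} and~\ref{L:F-props-varrho-infty}. The only point that requires a touch of care is verifying that the symbolic statement ``$\varrho(\alpha\delta)$ starts with $\beta$'' translates geometrically into the inclusion $A_{\varrho(\alpha\delta)}\subseteq X_\beta$, which is immediate from the way $X_\beta$ is described as a union of arcs and infinite-word tips indexed by extensions of $\beta$.
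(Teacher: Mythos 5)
Your argument for (a) matches the paper's (the paper just says "follows from (\ref{EQ:def-F-X1}), (\ref{EQ:def-F-X2}) and the choice of $\tau_\alpha$"). For (b) you take a genuinely different, more explicit route: the paper observes that $\bigcup_{\alpha'\in Q^*}A_{\alpha\alpha'}$ is dense in $X_\alpha$, checks $F(A_{\alpha\alpha'})=A_{\varrho(\alpha\alpha')}\subseteq X_\beta$ via (\ref{EQ:def-F-X2}) and Lemma~\ref{L:F-props-varrho}(c), and concludes $F(X_\alpha)\subseteq\closure{X_\beta}=X_\beta$ by a density/closure step, whereas you decompose $X_\alpha$ completely via (\ref{EQ:X-alpha}) and additionally verify the infinite tips $b_{\alpha\nu}$ directly from (\ref{EQ:def-F-Xinfty}) and Lemma~\ref{L:F-props-varrho-infty}, using that $\theta_{\alpha\nu}=\theta_\alpha$ because $\abs\alpha\ge 2$ fixes the second letter. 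Your version is a bit longer but has a real virtue: the paper's closure step implicitly relies on $F$ behaving continuously at points of $X^{(\infty)}$, and continuity of $F$ is only established later in Proposition~\ref{P:F-cont-surj}, whose proof in turn invokes this very lemma. By reading $F$ on $X^{(\infty)}$ directly off its definition (\ref{EQ:def-F-Xinfty}), you avoid any appearance of circularity, at the modest cost of tracking the $\theta$'s. Both proofs are correct; yours is self-contained at the point where the lemma appears.
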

\begin{proof}
 The statement (a) follows from (\ref{EQ:def-F-X1}), (\ref{EQ:def-F-X2}) and
 the choice of $\tau_\alpha$. To prove (b) realize that $\bigcup_{\alpha'\in Q^*} A_{\alpha\alpha'}$
 is dense in $X_\alpha$ and that $F(A_{\alpha \alpha'})=A_{\varrho(\alpha \alpha')}\subseteq X_\beta$
 by (\ref{EQ:def-F-X2}). Thus $F(X_\alpha)\subseteq \closure{X_\beta}=X_\beta$.
\end{proof}

\begin{lemma}\label{L:rho-surj}
The maps $\varrho|_{Q^*}:Q^*\to Q^*$ and $\varrho|_{Q^\infty}:Q^\infty\to Q^\infty$
are surjections.
Moreover, for every $\beta\in Q^*$ ($\beta\in Q^\infty$)
there is $\alpha\in Q^*$ ($\alpha\in Q^\infty$) such that
\begin{equation}\label{EQ:L:rho-surj}
  \varrho(\alpha)=\beta \qquad
  \text{and} \qquad
  \abs{\alpha}=\abs{\beta}.
\end{equation}
\end{lemma}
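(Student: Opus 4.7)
The plan is to construct a preimage $\alpha$ of the same length as $\beta$ by reading off the letters one by one, using the recursive description \eqref{EQ:rho-props} of $\varrho$ together with the surjectivity statements in Lemma~\ref{L:tau-alpha-props}. The cases $\abs\beta=0$ and $\abs\beta=1$ are trivial: $\varrho(\emptyword)=\emptyword$, and for $\beta=s\in Q$ the surjectivity of $\tau_\emptyword:Q\to Q$ (Lemma~\ref{L:tau-alpha-props}(b) applied to $\alpha=\emptyword$) produces $\alpha_0\in Q$ with $\tau_\emptyword(\alpha_0)=s$.

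For $\abs\beta=n\ge 2$, the key observation is that by Lemma~\ref{L:F-props-varrho}(a) we need $\theta_\alpha=0$, i.e.\ $\alpha_1>\omega$, to guarantee $\abs{\varrho(\alpha)}=\abs\alpha$. With that in mind I would build $\alpha=\alpha_0\alpha_1\dots\alpha_{n-1}$ inductively. First pick $\alpha_0\in Q$ with $\tau_\emptyword(\alpha_0)=\beta_0$, using $\tau_\emptyword(Q)=Q$. Next choose $\alpha_1\in Q\cap(\omega,1)$ with $\tau''_{\alpha_0}(\alpha_1)=\beta_1$; this is possible by Lemma~\ref{L:tau-alpha-props}(c2), which says precisely $\tau''_{\alpha_0}(Q\cap(\omega,1))=Q$. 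Then $\varrho(\alpha_0\alpha_1)=\tau_\emptyword(\alpha_0)\tau''_{\alpha_0}(\alpha_1)=\beta_0\beta_1$ by \eqref{EQ:rho-props}. Having chosen $\alpha^{(i)}=\alpha_0\dots\alpha_{i-1}$ of length $i\ge 2$ with $\varrho(\alpha^{(i)})=\beta_0\dots\beta_{i-1}$, note that $(\alpha^{(i)})_1=\alpha_1>\omega$, so Lemma~\ref{L:tau-alpha-props}(b) gives $\tau_{\alpha^{(i)}}(Q)=Q$ and we can select $\alpha_i\in Q$ with $\tau_{\alpha^{(i)}}(\alpha_i)=\beta_i$. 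By the last formula of \eqref{EQ:rho-props},
\[
 \varrho(\alpha^{(i+1)})=\varrho(\alpha^{(i)})\tau_{\alpha^{(i)}}(\alpha_i)=\beta_0\dots\beta_i,
\]
so the induction carries through to produce $\alpha\in Q^n$ with $\varrho(\alpha)=\beta$.

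For $\beta\in Q^\infty$ I would run the same construction without stopping, obtaining $\alpha\in Q^\infty$ with $\alpha_1>\omega$ and $\varrho(\alpha_0\dots\alpha_{m-1})=\beta_0\dots\beta_{m-1}$ for every $m\ge 2$. Since $\theta_\alpha=0$, Lemma~\ref{L:F-props-varrho-infty} then identifies this common prefix with $\varrho(\alpha)$, yielding $\varrho(\alpha)=\beta$. No step is genuinely hard; the only real content is noticing that one must force $\alpha_1>\omega$ at the second step to keep lengths equal, and that the available surjectivity statements in Lemma~\ref{L:tau-alpha-props} (parts (b) and (c2)) are tailored to exactly this situation.
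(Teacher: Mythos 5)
Your proof is correct and follows essentially the same route as the paper's: handle $\abs\beta\le 1$ directly via surjectivity of $\tau_\emptyword$, use Lemma~\ref{L:tau-alpha-props}(c2) to force $\alpha_1>\omega$ at the second step, then extend letter by letter via Lemma~\ref{L:tau-alpha-props}(b) and \eqref{EQ:rho-props}, appealing to Lemma~\ref{L:F-props-varrho-infty} for the $Q^\infty$ case. The paper merely compresses the inductive step into a single sentence; you have spelled out the same argument.
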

Note that if $\abs{\beta}\ge 2$ and $\alpha$ satisfies (\ref{EQ:L:rho-surj}) then $\alpha_1>\omega$.
\begin{proof}
First we deal with $\varrho$ on $Q^*$.
Take any $\beta\in Q^*$.
If $\beta=\emptyword$ put $\alpha=\emptyword$.
If $\abs{\beta}=1$, (\ref{EQ:L:rho-surj}) is guaranteed
by the fact that $\tau_\emptyword(Q)=Q$, see Lemma~\ref{L:tau-alpha-props}(b).
If $\abs{\beta}=2$, (\ref{EQ:L:rho-surj})
follows from  Lemma~\ref{L:tau-alpha-props}(c2).
Finally, for $\abs{\beta}\ge 3$ the result follows by induction from (\ref{EQ:rho-props}) and Lemma~\ref{L:tau-alpha-props}(b).

Now take any $\beta\in Q^\infty$.
By the previous case there are  $\alpha_0,\alpha_1\in Q$ such that
$\varrho(\alpha_0\alpha_1)=\beta_0\beta_1$ and $\alpha_1>\omega$.
Define inductively $\alpha_2,\alpha_3,\dots$ in such a way that
$\tau_{\alpha_0\dots\alpha_{i-1}}(\alpha_i)=\beta_i$ for every $i\ge 2$; this is possible
by Lemma~\ref{L:tau-alpha-props}(b). Then, by Lemma~\ref{L:F-props-varrho-infty},
$\varrho(\alpha_0\alpha_1\dots)=\beta$.
\end{proof}

\begin{proposition}\label{P:F-cont-surj}
The constructed map $F:X\to X$ is a non-injective continuous surjection.
Moreover, the sets $X^{(\infty)}$ and $X^{(m)}$ ($m\ge 0$) are (strongly) $F$-invariant.
\end{proposition}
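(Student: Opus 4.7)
I would split the proof into three parts: strong invariance of $X^{(m)}$ and $X^{(\infty)}$ (which immediately yields surjectivity), non-injectivity, and continuity.

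For strong invariance of $X^{(m)}$, the inclusion $F(X^{(m)})\subseteq X^{(m)}$ is read directly off the construction: $F(A_\emptyword)=A_\emptyword$ via the homeomorphism $\varphi_\emptyword$; formula (\ref{EQ:def-F-X1}) gives $F(A_r)\subseteq A_\emptyword\cup A_{\tau_\emptyword(r)}\subseteq X^{(1)}$ for $r\in Q$; and for $\abs{\alpha}\ge 2$, Lemma~\ref{L:F-props-Xm-alpha}(a) combined with Lemma~\ref{L:F-props-varrho}(a) gives $F(A_\alpha)=A_{\varrho(\alpha)}$ with $\abs{\varrho(\alpha)}\le\abs{\alpha}$. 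The reverse inclusion $X^{(m)}\subseteq F(X^{(m)})$ uses Lemma~\ref{L:rho-surj}: for each $\beta\in Q^k$ with $1\le k\le m$ pick $\alpha\in Q^k$ with $\varrho(\alpha)=\beta$, so $A_\beta\subseteq F(A_\alpha)\subseteq F(X^{(m)})$. For $X^{(\infty)}$, the definition (\ref{EQ:def-F-Xinfty}) and surjectivity of $\varrho|_{Q^\infty}$ (Lemma~\ref{L:rho-surj}) give $F(X^{(\infty)})=X^{(\infty)}$. Since $X=\bigcup_m X^{(m)}\sqcup X^{(\infty)}$, surjectivity of $F$ follows at once. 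For non-injectivity, Lemma~\ref{L:tau-0}(a) furnishes distinct $r_1,r_2\in Q$ with $\tau_\emptyword(r_1)=\tau_\emptyword(r_2)$, whence $F(b_{r_1})=b_{\tau_\emptyword(r_1)}=b_{\tau_\emptyword(r_2)}=F(b_{r_2})$.

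Continuity is the bulk of the argument. On each open arc, $F$ is continuous by construction, since the defining index maps $\tau_\alpha$ are continuous (for $\abs{\alpha}=1$, one checks that $\tau'_r$ and $\tau''_r$ glue continuously at $\omega$ via $\tau'_r(\omega)=\tau_\emptyword(r)$ and $\tau''_r(\omega)=0$). At a branch point $a_\alpha$ ($\alpha\in Q^+$) exactly three arcs meet, namely $A_\alpha$ and the two halves of $A_{\tilde\alpha}$ bordering $a_\alpha$; one has to verify that approaches from each direction yield the common value $F(a_\alpha)=a_{\varrho(\alpha)}$, which follows from $\tau_\alpha(0)=0$ (Lemma~\ref{L:tau-12}(b)) together with the compatibility relation $\varrho(\tilde\alpha)\tau_{\tilde\alpha}(\alpha_{\abs{\alpha}-1})=\varrho(\alpha)$ contained in (\ref{EQ:rho-props}). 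One must also cover sequences approaching $a_\alpha$ through the subdendrites $X_{\alpha r}$ with $r\to 0^+$: Lemma~\ref{L:F-props-Xm-alpha}(b) gives $F(X_{\alpha r})\subseteq X_{\varrho(\alpha r)}=X_{\varrho(\alpha)\tau_\alpha(r)}$, whose base point tends to $a_{\varrho(\alpha)}=F(a_\alpha)$ by continuity of $\tau_\alpha$ at $0$.

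Finally, for continuity at $b_\alpha\in X^{(\infty)}$ (with $\alpha\in Q^\infty$), a neighborhood basis is the nested family $\{X_{\alpha_0\dots\alpha_{m-1}}\}_{m\ge 1}$, shrinking to $\{b_\alpha\}$. Lemmas~\ref{L:F-props-Xm-alpha}(b) and \ref{L:F-props-varrho}(c) give $F(X_{\alpha_0\dots\alpha_{m-1}})\subseteq X_{\varrho(\alpha_0\dots\alpha_{m-1})}$, and Lemma~\ref{L:F-props-varrho-infty} shows that these index words are increasingly long prefixes of $\varrho(\alpha)$, so the image subdendrites shrink to $\{b_{\varrho(\alpha)}\}=\{F(b_\alpha)\}$. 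The one place where one really has to invoke the topology of the dendrite rather than just the combinatorics of $\varrho$ is the fact that $\diam(X_\gamma)\to 0$ as $\abs{\gamma}\to\infty$ along any branch (and in particular $\diam(X_{\alpha r})\to 0$ as $r\to 0$); this is the main subtlety, but it is a standard consequence of local connectedness and compactness of the universal dendrite of order~$3$. Everything else is essentially bookkeeping with the lemmas already at hand.
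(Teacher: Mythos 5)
Your overall strategy---deducing surjectivity from strong $F$-invariance of the sets $X^{(m)}$ and $X^{(\infty)}$ via Lemma~\ref{L:rho-surj}, getting non-injectivity from the two-to-one behaviour of $\tau_\emptyword$ in Lemma~\ref{L:tau-0}(a), and checking continuity by examining approaches to each type of point---is sound and essentially parallel to the paper's. The surjectivity and non-injectivity parts are fine; the paper's own non-injectivity argument exhibits two preimages, one in $A_\emptyword$ and one in $A_r$, of any $a_s$ with $s\in(\varphi_\emptyword(r),\tau_\emptyword(r)]$, which is equivalent to yours.

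The continuity argument, however, has a genuine gap. You treat continuity only on each arc's interior and at branch points $a_\alpha$, and at the branch points you consider only sequences entering through $X_{\alpha r}$ with $r\to 0^+$. You do not consider ordinary points $a_{\alpha t}$ with $t\in(0,1)\setminus Q$, nor sequences approaching an interior point $a_{\alpha t}$ of $A_\alpha$ through the hanging subdendrites $X_{\alpha r}$ with $r\to t$, $r\ne t$. That second omission is where the real content lies: if $x_n\in X_{\alpha r_n}$ with $r_n\to t$, $r_n\ne t$, then $F(x_n)\in X_{\beta s_n}$ with $\beta=\varrho(\alpha)$, $s_n=\tau_\alpha(r_n)$, and one needs $\diam X_{\beta s_n}\to 0$. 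When $s=\tau_\alpha(t)\notin Q$ this is automatic; but when $s\in Q$ it would fail if $s_n=s$ for infinitely many $n$, i.e.\ if $\tau_\alpha^{-1}(s)$ were infinite. The paper rules this out by invoking Lemma~\ref{L:tau-12}(d), which was engineered precisely so that every $s\in Q$ has only finitely many $\tau_\alpha$-preimages. (The case $r\to 0^+$ you do treat is the easy one, since $\tau_\alpha^{-1}(0)=\{0\}$ already follows from Lemma~\ref{L:tau-12}(b).) You identify the ``main subtlety'' as the soft topological fact that $\diam X_\gamma\to 0$ along a branch, but the genuinely non-trivial ingredient your sketch never invokes is this finiteness-of-preimages property of the index maps; without it the continuity argument at an interior point $a_{\alpha t}$ with $\tau_\alpha(t)\in Q$ cannot be completed, so the remainder is not merely bookkeeping.
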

\begin{proof}
Surjectivity of $F$ follows from Lemmas~\ref{L:rho-surj}, \ref{L:F-props-Xm-alpha}(a)
and the facts that $F(A_\emptyword)=A_\emptyword$, $F(A_r)\supseteq A_{\varrho(r)}$ for every $r\in Q$.
Further, $F$ is non-injective since, for example, every point of $(\varphi_\emptyword(r),\tau_\emptyword(r)]$
($r\in Q$) has at least two preimages: one in $A_\emptyword$ and another one in $A_r$.

To prove continuity of $F$ take any convergent sequence $(x_n)_n$ of points from $X$
such that, for some $y\in X$, $y=\lim_n F(x_n)$; put $x=\lim_n x_n$. We want to show that $y=F(x)$.
First assume that $x\in X^{(\infty)}$, that is, $x=b_\alpha$ for some $\alpha\in X^\infty$.
Take any $m\ge 2$ and put $\alpha'=\alpha_0\dots\alpha_{m-1}$.
Then there is $n_0$ such that
$x_n\in X_{\alpha'}$ for every $n\ge n_0$. Hence, by Lemma~\ref{L:F-props-Xm-alpha}(b),
$F(x_n)\in X_{\varrho(\alpha')}$ for every $n\ge n_0$ and so $y\in X_{\varrho(\alpha')}$.
Since this is true for every $m$ we have $y=b_{\varrho(\alpha)}=F(x)$.

Now assume that $x\in X^{(m)}$ for some $m$; that is, either $x=a_\emptyword$ or
$x=a_{\alpha t}$ for some $\alpha\in\bigcup_{0\le k\le m} Q^k$ and $t\in (0,1]$.
We describe only the case when $x=a_{\alpha t}$ with $\abs{\alpha}\ge 2$; the other cases
can be described analogously.
Put $\beta=\varrho(\alpha)$ and $s=\tau_\alpha(t)$.
Lemma~\ref{L:F-props-Xm-alpha} and (\ref{EQ:rho-props}) imply that,
for every sufficiently large $n$, $x_n\in X_{\alpha r_n}$ and $F(x_n)\in X_{\beta s_n}$
for some $r_n\in Q$ such that $\lim_n r_n= t$ and $\lim_n s_n=s$, where
$s_n=\tau_\alpha(r_n)$.
If there are infinitely many $n$'s with $r_n\ne t$, then $\diam X_{\alpha r_n}\to 0$;
moreover, $\diam X_{\beta s_n}\to 0$. (In fact, either $s\not\in Q$ and this is obvious,
or $s\in Q$ and $\tau_\alpha^{-1}(s)$ is finite by Lemma~\ref{L:tau-12}(d),
so $s_n\ne s$ for all but finitely many $n$s.) Thus, by (\ref{EQ:def-F-X2}),
$y=a_{\beta s}=a_{\varrho(\alpha)\tau_\alpha(t)} = F(a_{\alpha t})=F(x)$.
If there are only finitely many $n$'s with $r_n\ne t$, we may assume that every $r_n=t$.
Then there is a sequence $(r_n')_n$ in $Q$ converging to $0$
such that $x_n\in X_{\alpha t r_n'}$ for every $n$.
Using that $\tau_\alpha^{-1}(0)=\{0\}$ by Lemma~\ref{L:tau-12}(b),
analogously as before
we obtain that $y=F(x)$.

Finally, the fact that the sets $X^{(m)}$ ($m\ge 0$) and $X^{(\infty)}$ are strongly $F$-invariant
is a consequence of the construction and surjectivity of $F$.
\end{proof}

\subsection{Regular periodic decompositions for $F$}\label{SS:F-RPD}
The main result of this section is Proposition~\ref{P:F-RPD} stating that every $\DDd_m$
is a regular periodic decomposition for $F$. We start with the next lemma.

\begin{lemma}\label{L:F-RPD-rho}
Let $\alpha\in Q^*$ and $\gamma\in\Sigma^*$ be such that $\alpha\in Q_\gamma$. Then
\begin{enumerate}
  \item[(a)] $\varrho(\alpha)\in Q_{\tilde\gamma+1}$;
  \item[(b)] $\varrho(\alpha)\in Q_{\gamma+1}$ provided $\abs{\alpha}\le 1$,
    or $\abs{\alpha}\ge 2$ and $\alpha_1>\omega$.
\end{enumerate}
\end{lemma}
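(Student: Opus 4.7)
The plan is to compute $\beta=\varrho(\alpha)$ letter by letter using Lemma~\ref{L:F-props-varrho}(b), which gives $\beta_{i-\theta_\alpha}=\tau_{\alpha_0\dots\alpha_{i-1}}(\alpha_i)$ for $\theta_\alpha\le i<\abs{\alpha}$, and then to read off the residue class of each $\beta_j$ from Lemma~\ref{L:tau-alpha-props}. The combinatorial bridge that makes the two match is the identity
\[
 (\gamma+1)_i \;=\; \last{\gamma_0\dots\gamma_i}
 \qquad\text{for every } 0\le i<\abs{\gamma},
\]
which is immediate from the carry rule: the digit $(\gamma+1)_i$ is flipped relative to $\gamma_i$ exactly when $\gamma_0=\dots=\gamma_{i-1}=1$, and this is precisely the defining condition under which $\last{\gamma_0\dots\gamma_i}=\gamma_i+1$.

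The cases $\abs{\alpha}\le 1$ are disposed of first: $\alpha=\emptyword$ is trivial, and for $\alpha=r\in Q_{\gamma_0}$ one has $\varrho(r)=\tau_\emptyword(r)\in Q_{\last{\gamma_0}}=Q_{(\gamma+1)_0}$ by Lemma~\ref{L:tau-alpha-props}(b). For the main case of (b), namely $\abs{\alpha}\ge 2$ with $\alpha_1>\omega$ (so $\theta_\alpha=0$ and $\abs{\beta}=\abs{\alpha}$), I would verify three subcases: for $i=0$ apply Lemma~\ref{L:tau-alpha-props}(b) to $\tau_\emptyword$; for $i=1$ apply (c2), which is available precisely because $\alpha_1>\omega$; and for $i\ge 2$ apply (b) to $\alpha'=\alpha_0\dots\alpha_{i-1}$, noting $\abs{\alpha'}\ge 2$ and $\alpha'_1=\alpha_1>\omega$. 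In each subcase $\beta_i\in Q_{\last{\gamma_0\dots\gamma_i}}=Q_{(\gamma+1)_i}$, giving $\beta\in Q_{\gamma+1}$. For (a), the remaining case $\abs{\alpha}\ge 2$ with $\alpha_1<\omega$ (so $\theta_\alpha=1$ and $\abs{\beta}=\abs{\alpha}-1$) is treated analogously: for $i=1$ use Lemma~\ref{L:tau-alpha-props}(c1) (now $\alpha_1<\omega$) to get $\beta_0\in Q_{\last{\gamma_0}}=Q_{(\tilde\gamma+1)_0}$; for $i\ge 2$ apply the equality clause of Lemma~\ref{L:tau-alpha-props}(a), valid because $\abs{\alpha'}\ge 2$ and $\alpha'_1=\alpha_1<\omega$, to get $\beta_{i-1}\in Q_{\last{\gamma_0\dots\gamma_{i-1}}}=Q_{(\tilde\gamma+1)_{i-1}}$, where in the last equality one uses that $\tilde\gamma$ and $\gamma$ agree on positions up to $m-2$.

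There is no conceptual obstacle here; the statement is a direct corollary of the two lemmas already available. The only genuine hazard is bookkeeping: matching the index shift $\theta_\alpha$ with the correct variant of Lemma~\ref{L:tau-alpha-props}, and in particular separating the specialised clauses (c1)/(c2), which are in force only at the single position where the middle word $\alpha_0$ has length one, from the generic clauses (a)/(b) that govern the higher positions. Once one observes that the sign of $\alpha_1-\omega$ uniformly dictates which branch of $\tau_{\alpha'}$ is in force at every step $i\ge 1$, the whole argument collapses to the prefix-$\last{}$ identity displayed above.
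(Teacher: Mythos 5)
Your proposal is correct and uses the same ingredients as the paper's proof: Lemma~\ref{L:F-props-varrho}(b) for the letters $\beta_{i-\theta_\alpha}$, Lemma~\ref{L:tau-alpha-props} for their residues, and the carry identity relating $\epsilon$ to binary $+1$ (your $(\gamma+1)_i=\last{\gamma_0\dots\gamma_i}$ is the unrolled form of the paper's (\ref{EQ:e-gamma})). The framing differs: the paper runs an induction on $\abs{\alpha}$ through the recursion $\varrho(\alpha r)=\varrho(\alpha)\tau_\alpha(r)$, while you check coordinates directly. Your version is in fact a bit more careful: the paper's inductive step for (a) tacitly applies $\varrho(\alpha r)=\varrho(\alpha)\tau_\alpha(r)$ in a range where that identity fails (it is false when $\abs{\alpha}=1$ and $r<\omega$, by (\ref{EQ:rho-props})), and its inductive hypothesis $\varrho(\alpha)\in Q_{\tilde{\gamma}+1}$ is a length mismatch whenever $\abs{\alpha}=1$ or $\alpha_1>\omega$. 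You avoid this by establishing (a) only when $\abs{\alpha}\ge 2$ and $\alpha_1<\omega$, i.e.\ when $\abs{\varrho(\alpha)}=\abs{\tilde{\gamma}}$; this is the only case in which the membership $\varrho(\alpha)\in Q_{\tilde{\gamma}+1}$ is even type-correct, and the only case in which part (a) is used (in Proposition~\ref{P:F-RPD}). Stating that restriction explicitly, rather than calling it merely \emph{the remaining case}, would make the write-up unambiguous.
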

\begin{proof}
 (a) If $\alpha=\emptyword$ then (a) is trivial (use that $Q_\emptyword=\{\emptyword\}$
 and $\emptyword+1=\emptyword$). Assume that $\varrho(\alpha)\in Q_{\tilde\gamma+1}$
 for some $\alpha\in Q_\gamma$ and take any $c\in\Sigma$, $r\in Q_c$.
 Then $\varrho(\alpha r) = \varrho(\alpha)\tau_\alpha(r)\in Q_{(\tilde\gamma+1) \last\gamma}
 =Q_{\gamma+1}$ by Lemma~\ref{L:tau-alpha-props}(a) and (\ref{EQ:e-gamma}). Hence, by induction,
 (a) is proved.

 (b) The assertion is again trivially true for $\alpha=\emptyword$. For $\alpha\in Q^1$,
 Lemma~\ref{L:tau-alpha-props}(b) gives
 $\varrho(\alpha)=\tau_\emptyword(\alpha)\in Q_{\last{\emptyword \gamma}}=Q_{\gamma+1}$.
 Further, if $c\in\Sigma$ and $r\in Q_c\cap(\omega,1)$, then
 (\ref{EQ:rho-props}) and Lemma~\ref{L:tau-alpha-props}(c2) yield
 $\varrho(\alpha r) = \varrho(\alpha) \tau''_\alpha(r)\in Q_{(\gamma+1) \last{\gamma c}}
 =Q_{\gamma c+1}$. So (b) is proved for every $\alpha$ with $\abs{\alpha}\le 1$
 or with $\abs{\alpha}=2$, $\alpha_1>\omega$.

 To finish the proof of (b), it suffices to show that
 $\varrho(\alpha r)\in Q_{\gamma c+1}$ for
 any $\alpha$ with $\abs{\alpha}\ge 2$, $\alpha_1>\omega$ and $r\in Q_c$, $c\in\Sigma$.
 But this immediately follows by induction from (\ref{EQ:rho-props}) and Lemma~\ref{L:tau-alpha-props}(b), since
 $\varrho(\alpha r) =  \varrho(\alpha) \tau_\alpha(r)\in Q_{(\gamma+1) \last{\gamma c}}
 =Q_{\gamma c+1}$. So the proof is finished.
\end{proof}

To prove Proposition~\ref{P:F-RPD}, we will also need the following refinement of
Lemma~\ref{L:F-props-Xm-alpha}(b). Let us note that in Lemma~\ref{L:F-props-image-of-Xalpha}
even the equalities hold; see Lemma~\ref{L:F-props-image-of-Xalpha-equalities}.

\begin{lemma}\label{L:F-props-image-of-Xalpha}
Let $\alpha\in Q^+$, $\beta=\varrho(\alpha)$ and $\gamma$ be such that $\alpha\in Q_\gamma$. Then
$$
 F(X_\alpha)\subseteq
 \begin{cases}
   X_\beta
   &\text{if }\abs\alpha\ge 2 \text{ and } \alpha_1>\omega;
 \\
   A_\beta \cup \left(\bigsqcup_{s\in Q_{\last\gamma}} X_{\beta s}\right)
   &\text{if }\abs\alpha\ge 2 \text{ and } \alpha_1<\omega;
 \\
   [F(a_{\alpha}), a_\beta] \cup \left(\bigsqcup_{s\in Q_{\last\gamma}\cap(\varphi_\emptyword(\alpha),\beta]} X_{s}\right)
   &\text{if }\abs\alpha=1.
 \end{cases}
$$
\end{lemma}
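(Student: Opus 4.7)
The plan is to exploit the explicit decomposition (\ref{EQ:X-alpha}) of $X_\alpha$ into the arcs $A_{\alpha\alpha'}$ ($\alpha'\in Q^*$) together with the endpoints $b_{\alpha\eta}$ ($\eta\in Q^\infty$), and to compute the image of each piece directly from the construction of $F$. The formulas (\ref{EQ:def-F-X1})--(\ref{EQ:def-F-Xinfty}) give $F(A_{\alpha\alpha'})=A_{\varrho(\alpha\alpha')}$ and $F(b_{\alpha\eta})=b_{\varrho(\alpha\eta)}$, so the whole argument boils down to locating the first one or two letters of the image words $\varrho(\alpha\alpha')$ and $\varrho(\alpha\eta)$. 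All of the required first-letter information is packaged in Lemmas~\ref{L:F-props-varrho}, \ref{L:F-props-varrho-infty} and \ref{L:tau-alpha-props}, so the proof reduces to invoking these three lemmas once per case.

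Case~1 ($|\alpha|\ge 2$, $\alpha_1>\omega$) is the simplest: every extension $\alpha\alpha'$ (or $\alpha\eta$) still has second letter $\alpha_1>\omega$, hence $\theta_{\alpha\alpha'}=0$, and Lemma~\ref{L:F-props-varrho}(c) together with Lemma~\ref{L:F-props-varrho-infty} ensures that $\varrho(\alpha\alpha')$ starts with $\beta$, placing every piece in $X_\beta$. Case~2 ($|\alpha|\ge 2$, $\alpha_1<\omega$) requires only one extra observation: for $\alpha'=\emptyword$ one reads off $F(A_\alpha)=A_\beta$ from (\ref{EQ:def-F-X2}), while for $\alpha'=s\widehat\alpha$ with $s\in Q$ Lemma~\ref{L:F-props-varrho}(b) shows that $\varrho(\alpha\alpha')$ starts with $\beta\,\tau_\alpha(s)$, and Lemma~\ref{L:tau-alpha-props}(a) confirms $\tau_\alpha(s)\in Q_{\last\gamma}$, so $A_{\varrho(\alpha\alpha')}\subseteq X_{\beta\tau_\alpha(s)}$; the infinite-word variant via Lemma~\ref{L:F-props-varrho-infty} handles the endpoints $b_{\alpha\eta}$ identically.

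Case~3 ($|\alpha|=1$) is the most delicate because $F$ on $A_r$ is defined piecewise. Here I would treat $\alpha'=\emptyword$ directly from (\ref{EQ:def-F-X1}): the piece $\{a_{rt}:t\in[0,\omega]\}$ is mapped by $\tau'_r$ onto the trunk sub-arc $[a_{\varphi_\emptyword(r)},a_\beta]=[F(a_r),a_\beta]$, while $\{a_{rt}:t\in[\omega,1]\}$ is mapped onto $A_\beta\subseteq X_\beta$; using Lemma~\ref{L:tau-alpha-props}(b) for $\tau_\emptyword$ one checks $\beta\in Q_{\last\gamma}\cap(\varphi_\emptyword(r),\beta]$, so the second piece is accounted for by the summand $X_\beta$ on the right. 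For $\alpha'=s\in Q$ the formula (\ref{EQ:rho-props}) splits into two subcases: if $s<\omega$ then $\varrho(rs)=\tau'_r(s)$ lies in $Q_{\last\gamma}\cap(\varphi_\emptyword(r),\beta)$ by Lemma~\ref{L:tau-alpha-props}(c1), whereas if $s>\omega$ then $\varrho(rs)=\beta\,\tau''_r(s)$ and $A_{\varrho(rs)}\subseteq X_\beta$. Extensions of length $\ge 2$ and infinite extensions $\eta$ yield the same first-letter conclusions via Lemma~\ref{L:F-props-varrho}(b),(c) and Lemma~\ref{L:F-props-varrho-infty}.

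The main obstacle is the bookkeeping in Case~3: the trunk component $[F(a_r),a_\beta]$ and the branch components $X_s$ indexed by $s\in Q_{\last\gamma}\cap(\varphi_\emptyword(r),\beta]$ arise from two different pieces of (\ref{EQ:def-F-X1}), and the refined parts (c1)--(c2) of Lemma~\ref{L:tau-alpha-props} are exactly what is needed to identify the correct index sets and endpoints. Note also that since the right-hand side need not be closed, the containment must be verified pointwise on the full decomposition (\ref{EQ:X-alpha}) of $X_\alpha$ rather than obtained from a closure/continuity argument.
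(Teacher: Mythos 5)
Your proof is correct and follows essentially the same route as the paper's: decompose $X_\alpha$, compute $\varrho$ of the extensions, and invoke Lemma~\ref{L:tau-alpha-props} (parts (a), (b), (c1), (c2)) to place the leading letters of the image words in the right $Q$-classes, splitting Case~3 at $s<\omega$ versus $s>\omega$ exactly as the construction demands. The only cosmetic difference is that the paper stops at the one-level decomposition $X_\alpha=A_\alpha\cup\bigsqcup_{r\in Q}X_{\alpha r}$ and cites $F(X_{\alpha r})\subseteq X_{\varrho(\alpha r)}$ from Lemma~\ref{L:F-props-Xm-alpha}(b), rather than unrolling all of (\ref{EQ:X-alpha}) into arcs $A_{\alpha\alpha'}$ and ends $b_{\alpha\eta}$; and your appeal to $\theta_{\alpha\alpha'}=0$ in Case~1 is harmless but unnecessary, since Lemma~\ref{L:F-props-varrho}(c) already gives that $\varrho(\alpha\alpha')$ starts with $\beta$ for any $\alpha'$.
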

\begin{proof}
The first case was proved in Lemma~\ref{L:F-props-Xm-alpha}(b). Assume now that $\abs{\alpha}\ge 2$
and $\alpha_1<\omega$. Write $X_\alpha$ in the form
\begin{equation}\label{EQ:Xalpha=cup-Xalphar}
  X_\alpha = A_\alpha \cup\left(
    \bigsqcup_{r\in Q} X_{\alpha r}
  \right).
\end{equation}
By Lemma~\ref{L:F-props-Xm-alpha} and (\ref{EQ:rho-props}), $F(A_\alpha)=A_\beta$
and $F(X_{\alpha r})\subseteq X_{\varrho(\alpha r)}=X_{\beta \tau_\alpha(r)}$
for every $r\in Q$.
Since $\tau_\alpha(r)\in Q_{\last\gamma}$ by Lemma~\ref{L:tau-alpha-props}(a), the second inclusion is
proved.

Finally assume that $\abs{\alpha}=1$. Again write $X_\alpha$ in the form (\ref{EQ:Xalpha=cup-Xalphar})
and realize that $F(A_\alpha)=A_\beta\cup [F(a_\alpha),a_\beta]$ and $F(X_{\alpha r})\subseteq
X_{\varrho(\alpha r)}$. If $r<\omega$ then $\varrho(\alpha r) = \tau'_\alpha (r)\in Q_{\last\gamma}
\cap(\varphi_\emptyword(\alpha),\beta)$
by Lemma~\ref{L:tau-alpha-props}(c1); thus
$F(X_{\alpha r})\subseteq X_s$ for some $s\in Q_{\last\gamma}\cap(\varphi_\emptyword(\alpha),\beta)$.
On the other hand, if $r>\omega$ then $\varrho(\alpha r) = \beta\tau''_\alpha (r)$
and so $F(X_{\alpha r})\subseteq X_\beta$. Hence also the final inclusion is proved,
since $\beta\in Q_{\gamma+1}=Q_{\last\gamma}$ by Lemma~\ref{L:F-RPD-rho}(b).
\end{proof}

Now we are able to show that every $\DDd_m$ is a regular periodic decomposition for $F$; recall the definitions (\ref{EQ:RPD-Dgamma-def}) and (\ref{EQ:RPD-Dm-def}) of $D_\gamma$ and $\DDd_m$.

\begin{proposition}\label{P:F-RPD}
 For every $m\ge 0$, $\DDd_m$ is a regular periodic decomposition for $F$.
\end{proposition}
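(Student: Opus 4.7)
The plan is to verify the three defining conditions for $\DDd_m=(D_\gamma)_{\gamma\in\Sigma^m}$ to be a regular periodic decomposition for $F$. Regular closedness of each $D_\gamma$ and the covering $\bigcup_\gamma D_\gamma=X$ are immediate from Lemma~\ref{L:def-RPD}(a)--(b); nowhere-denseness of $D_\gamma\cap D_\delta$ for $\gamma\ne\delta$ follows from Lemma~\ref{L:def-RPD}(c) since every $Y^{(\epsilon)}$ lies inside $X^{(m-1)}$, which is nowhere dense in $X$. The case $m=0$ is trivial, so assume $m\ge 1$; the substance is then to prove the cyclic inclusion $F(D_\gamma)\subseteq D_{\gamma+1}$.

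Decomposing $D_\gamma=Y^{(\tilde\gamma)}\cup\bigsqcup_{\alpha\in Q_\gamma}X_\alpha$, I would treat the branching pieces and the skeleton separately. For each $\alpha\in Q_\gamma$, Lemma~\ref{L:F-props-image-of-Xalpha} expresses $F(X_\alpha)$ as a union of ``deep'' sets of the form $X_\eta$ and ``shallow'' arcs, according to which of the three cases ($\abs\alpha\ge 2$ with $\alpha_1>\omega$, $\abs\alpha\ge 2$ with $\alpha_1<\omega$, or $\abs\alpha=1$) applies. Using Lemma~\ref{L:F-RPD-rho}(b) in the first and third cases and Lemma~\ref{L:F-RPD-rho}(a) together with the identity $(\tilde\gamma+1)\last\gamma=\gamma+1$ (a consequence of (\ref{EQ:e-gamma})) in the second, one checks that every index $\eta$ produced in a deep piece lies in $Q_{\gamma+1}$, so $X_\eta\subseteq\bigsqcup_{\beta\in Q_{\gamma+1}}X_\beta\subseteq D_{\gamma+1}$, while every shallow arc is contained in $Y^{(\widetilde{\gamma+1})}\subseteq D_{\gamma+1}$.

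For the skeleton I would prove the stronger inclusion $F(Y^{(\tilde\gamma)})\subseteq Y^{(\widetilde{\gamma+1})}$ by decomposing $Y^{(\tilde\gamma)}=\bigcup_{k=0}^{m-1}\bigsqcup_{\beta\in Q_{\tilde\gamma_0\dots\tilde\gamma_{k-1}}}A_\beta$ and checking $F(A_\beta)\subseteq Y^{(\widetilde{\gamma+1})}$ arc by arc. For $\beta=\emptyword$ this is trivial from $F(A_\emptyword)=A_\emptyword$. For $\abs\beta=1$ (possible only when $m\ge 2$), formula (\ref{EQ:def-F-X1}) and Lemma~\ref{L:F-RPD-rho}(b) present $F(A_\beta)$ as the union of a subarc of $A_\emptyword$ and the arc $A_{\varrho(\beta)}$, sitting in the $k'=0$ and $k'=1$ layers of $Y^{(\widetilde{\gamma+1})}$ respectively. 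For $\abs\beta\ge 2$, Lemma~\ref{L:F-props-Xm-alpha}(a) gives $F(A_\beta)=A_{\varrho(\beta)}$, and the containment reduces to the general combinatorial fact that, for any word and any index $j$, the length-$j$ prefix of $\gamma+1$ equals the length-$j$ prefix of $\gamma$ plus one (verified by splitting according to whether carry propagates out of position $j-1$), applied to put $A_{\varrho(\beta)}$ into the correct layer of $Y^{(\widetilde{\gamma+1})}$.

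The main obstacle is the bookkeeping driven by Lemma~\ref{L:F-props-varrho}(a): $\abs{\varrho(\alpha)}$ equals either $\abs\alpha$ or $\abs\alpha-1$, so one must carefully match $\varrho(\alpha)$ against either $Q_{\gamma+1}$ or $Q_{\tilde\gamma+1}$. Once the two variants of Lemma~\ref{L:F-RPD-rho} are correctly aligned with the three cases of Lemma~\ref{L:F-props-image-of-Xalpha} and one has the small prefix identity for $\gamma+1$ at hand, the verification in each case is a direct unwinding of the definitions.
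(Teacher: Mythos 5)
Your verification of the deep pieces $F(X_\alpha)\subseteq D_{\gamma+1}$ using Lemma~\ref{L:F-props-image-of-Xalpha} together with the two parts of Lemma~\ref{L:F-RPD-rho} and the identity $(\tilde\gamma+1)\last\gamma = \gamma+1$ is exactly what the paper does, and your case analysis is correct. Where you differ is in handling the skeleton: the paper only proves $F(\interior{D_\gamma})\subseteq D_{\gamma+1}$ (that is, the inclusion for the interior $\bigsqcup_\alpha(X_\alpha\setminus\{a_\alpha\})$) and then invokes regular closedness, $D_\gamma=\closure{\interior{D_\gamma}}$, plus continuity of $F$ to extend to the full closed piece --- it never touches $Y^{(\tilde\gamma)}$ directly. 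You instead verify $F(Y^{(\tilde\gamma)})\subseteq Y^{(\widetilde{\gamma+1})}$ arc by arc using the prefix-commutes-with-addition fact $\widetilde{\gamma+1}=\tilde\gamma+1$. Your route costs a little extra bookkeeping (matching each $A_{\varrho(\beta)}$ to the right layer of $Y^{(\widetilde{\gamma+1})}$, with the shift depending on $\theta_\beta$), but it has the advantage of being fully explicit and not relying on the slightly implicit topological step in the paper; it also yields the stronger statement $F(Y^{(\tilde\gamma)})\subseteq Y^{(\widetilde{\gamma+1})}$ as a by-product, which is not recorded in the paper. Both arguments are sound.
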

\begin{proof}
For $m=0$ this is trivial, so assume that $m\ge 1$. We will prove that
\begin{equation}\label{EQ:F-RPD-proof}
 F(X_\alpha)\subseteq D_{\gamma+1}
 \qquad\text{for every } \gamma\in\Sigma^m \text{ and } \alpha\in Q_\gamma;
\end{equation}
from this the proposition will follow since (\ref{EQ:F-RPD-proof})
implies that $F(\interior{D_\gamma}) \subseteq D_{\gamma+1}$, see Lemma~\ref{L:def-RPD}(a).

For $m=1$ Lemma~\ref{L:F-props-image-of-Xalpha} implies
\begin{equation*}
 F(X_\alpha)
 \subseteq A_\emptyword \cup\left(
   \bigsqcup_{s\in Q_{\gamma+1}} X_{s}
 \right)
 = D_{\gamma+1}.
\end{equation*}
Assume now that $m\ge 2$. Then, by Lemma~\ref{L:F-props-Xm-alpha}(b),
$F(X_\alpha)\subseteq X_{\varrho(\alpha)}$.
If $\alpha_1>\omega$ then $\varrho(\alpha)\in Q_{\gamma+1}$ by Lemma~\ref{L:F-RPD-rho},
hence (\ref{EQ:F-RPD-proof}) is true.
On the other hand, for $\alpha_1<\omega$, Lemma~\ref{L:F-props-image-of-Xalpha}
and the facts that $\beta\in Q_{\tilde{\gamma}+1}$ and
$\beta s\in Q_{\gamma+1}$ for every $s\in Q_{\last\gamma}$ give
\begin{equation*}
 F(X_\alpha)
 \subseteq A_\beta \cup\left(
   \bigsqcup_{s\in Q_{\last\gamma}} X_{\beta s}
 \right)
 \subseteq Y^{(\tilde\gamma+1)} \cup\left(
   \bigsqcup_{\beta'\in Q_{\gamma+1}} X_{\beta'}
 \right)
 = D_{\gamma+1}
\end{equation*}
(realize that $(\gamma+1)^{\widetilde{ }} = \tilde\gamma+1$).
The proposition is proved.
\end{proof}

\subsection{Transitivity of $F$}\label{SS:F-trans}
Here we show that $F$ is transitive, see Proposition~\ref{P:F-trans}.
As before, we start with some technical lemmas.
For every $\alpha\in Q^+$ put
\begin{equation}\label{EQ:F-trans-Ralpha-def}
  R_\alpha = \{
   \beta\in Q^+:\ \varrho^n(\alpha\alpha') \text{ starts with } \beta
   \text{ for some } \alpha'\in Q^* \text{ and } n\in\NNN
  \}.
\end{equation}
Note that, in (\ref{EQ:F-trans-Ralpha-def}), one can replace $\alpha'\in Q^*$ by $\alpha'\in Q^\infty$.

\begin{lemma}\label{L:F-trans-Ralpha-implies-trans}
 The constructed map $F:X\to X$ is transitive if and only if $R_\alpha=Q^+$ for every $\alpha\in Q^+$.
\end{lemma}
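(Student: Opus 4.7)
The plan is to use the sets $X_\alpha\setminus\{a_\alpha\}$ ($\alpha\in Q^+$) as a $\pi$-base for $X$ and to exploit the $F$-invariance together with the denseness of $X^{(\infty)}$, combined with the iterated identity $F^n(b_\nu)=b_{\varrho^n(\nu)}$ for $\nu\in Q^\infty$, which follows from (\ref{EQ:def-F-Xinfty}) and Lemma~\ref{L:F-props-varrho-infty}. Under this translation the geometric statement ``$F^n(X_\alpha)$ meets $X_\beta$'' becomes the combinatorial statement ``$\varrho^n$ of some extension of $\alpha$ starts with $\beta$'', which is precisely the defining condition for $\beta\in R_\alpha$.

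For the forward direction, fix $\alpha,\beta\in Q^+$ and apply transitivity to the non-empty open sets $U=X_\alpha\setminus\{a_\alpha\}$ and $V=X_\beta\setminus\{a_\beta\}$, producing $n\in\NNN$ with $U\cap F^{-n}(V)$ non-empty and open. Since $X^{(\infty)}$ is dense, this intersection contains a point $b_\nu$ with $\nu\in Q^\infty$. From (\ref{EQ:X-alpha}), $b_\nu\in X_\alpha$ forces $\nu$ to start with $\alpha$, say $\nu=\alpha\alpha''$ with $\alpha''\in Q^\infty$; similarly $F^n(b_\nu)=b_{\varrho^n(\nu)}\in X_\beta$ forces $\varrho^n(\alpha\alpha'')$ to start with $\beta$. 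The remark following (\ref{EQ:F-trans-Ralpha-def}) then gives $\beta\in R_\alpha$.

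For the converse, assume $R_\alpha=Q^+$ for every $\alpha\in Q^+$ and fix non-empty open $U,V\subseteq X$. The $\pi$-base property follows by choosing any $b_\nu\in X^{(\infty)}\cap U$ and observing that the decreasing sequence of closed sets $X_{\nu_0\dots\nu_{m-1}}$ has intersection $\{b_\nu\}\subseteq U$, so by compactness of $X\setminus U$ some $X_{\nu_0\dots\nu_{m-1}}\subseteq U$. Picking such $\alpha,\beta\in Q^+$ with $X_\alpha\subseteq U$ and $X_\beta\subseteq V$ and invoking $\beta\in R_\alpha$ yields $\alpha'\in Q^*$ and $n\in\NNN$ with $\varrho^n(\alpha\alpha')$ starting with $\beta$. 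The endpoint $b_{\alpha\alpha'}$ lies in $X_\alpha\subseteq U$ (distinct from $a_\alpha$ since $\alpha\alpha'\ne\emptyword$ when $\alpha\ne\emptyword$), and its image $F^n(b_{\alpha\alpha'})=b_{\varrho^n(\alpha\alpha')}$ lies in $X_\beta\subseteq V$, so $F^n(U)\cap V\ne\emptyset$.

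The main obstacle is entirely bookkeeping: establishing the prefix characterisation $b_\mu\in X_\gamma\Leftrightarrow\gamma$ is a prefix of $\mu$ from (\ref{EQ:X-alpha}), and in the forward direction trading the infinite witness $\alpha''\in Q^\infty$ for a finite $\alpha'\in Q^*$ if one insists on the original form of the definition, which is legitimate since Lemma~\ref{L:F-props-varrho}(c) ensures that any finite prefix of $\varrho^n(\alpha\alpha'')$ already appears as $\varrho^n$ of a sufficiently long finite prefix of $\alpha\alpha''$.
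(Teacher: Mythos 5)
Your proof is correct and takes essentially the same route as the paper's: using the sets $X_\alpha\setminus\{a_\alpha\}$ as a $\pi$-base, passing to a point $b_\nu\in X^{(\infty)}$ via density, and translating $F^n(b_\nu)\in X_\beta$ into the prefix condition on $\varrho^n$. The only differences are cosmetic — you use the open set $U\cap F^{-n}(V)$ and density of $X^{(\infty)}$ where the paper instead appeals to continuity of $F$ at the point $x$ produced by transitivity, and in the converse you take a finite witness $\alpha'\in Q^*$ where the paper takes $\alpha'\in Q^\infty$ (both legitimate by the remark after (\ref{EQ:F-trans-Ralpha-def})).
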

\begin{proof}
Assume that $F$ is transitive. Take any $\alpha,\beta\in Q^+$ and put $U=X_\alpha\setminus\{a_\alpha\}$,
$V=X_\beta\setminus\{a_\beta\}$. Transitivity implies that
there is $x\in U$ and $n\in\NNN$ such that $F^n(x)\in V$. The continuity of $F$ at $x$ implies that
$F^n(U_x)\subseteq V$ for some neighborhood $U_x\subseteq U$ of $x$. Take any $x'=b_{\alpha\alpha'}
\in U_x\cap X^{(\infty)}$. Then, by (\ref{EQ:def-F-Xinfty}), $F^n(x')=b_{\varrho^n(\alpha\alpha')}
\in X_\beta$. Hence $\beta\in R_\alpha$. Since $\beta$ was arbitrary, we have that $R_\alpha=Q^+$.

Assume now that $R_\alpha=Q^+$ for every $\alpha\ne\emptyword$.
Take any non-empty open sets $U,V$. Then there are $\alpha,\beta\in Q^+$ such that
$X_\alpha\subseteq U$ and $X_\beta\subseteq V$; we may assume that
$\abs{\alpha}\ge 2$. Since $\beta\in R_\alpha$, there is $\alpha'\in Q^\infty$
and $n\in\NNN$ such that $\varrho^n(\alpha\alpha')$ starts with $\beta$.
But then $F^n(b_{\alpha\alpha'})=b_{\varrho^n(\alpha\alpha')}$ belongs to $X_\beta$,
so $F^n(U)\cap V \supseteq F^n(X_\alpha)\cap X_\beta\ne\emptyset$. This proves transitivity of $F$.
\end{proof}

Thus, by the previous lemma, to show the transitivity of $F$
we need to study properties of the iterates of $\varrho$.
For $\alpha\in Q^*$, let $[\alpha]$ denote the cylinder of all words from $Q^*$ starting with $\alpha$; that is,
\begin{equation}\label{EQ:alpha-cylinder}
  [\alpha] = \{\alpha\beta:  \beta\in Q^*\}.
\end{equation}
Note that if $\alpha\in Q_\gamma$ and $\delta\in\Sigma^*$, then
\begin{equation*}
    [\alpha]\cap Q_{\gamma\delta}= \{\alpha\beta: \ \beta\in Q_\delta\}.
\end{equation*}
For $k\ge 1$ let $\pi_k:\left(\bigcup_{m\ge k} \Sigma^m\right) \to \Sigma^k$
be the projection onto the first $k$ letters, that is,
$\pi_k(\gamma)=\gamma_0\dots\gamma_{k-1}$ for every $\gamma=\gamma_0\dots\gamma_{m-1}\in \Sigma^m$ with $m\ge k$.

\begin{lemma}\label{L:F-trans-rho}
For every $\alpha\in Q_\gamma$ with $\abs{\alpha}\ge 2$, $k\ge 1$
and $\delta\in\Sigma^k$,
\begin{equation*}
  \varrho([\alpha]\cap Q_{\gamma\delta})
  = [\varrho(\alpha)] \cap Q_{\pi_{p+k}(\gamma\delta+1)}
  = \begin{cases}
    [\varrho(\alpha)] \cap Q_{\gamma\tilde{\delta}+1}  &\text{if } \alpha_1<\omega;
  \\
    [\varrho(\alpha)] \cap Q_{\gamma\delta+1}  &\text{if } \alpha_1>\omega,
  \end{cases}
\end{equation*}
where $p=\abs{\varrho(\alpha)}$.
\end{lemma}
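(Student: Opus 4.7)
The plan is to prove both inclusions of the first equality separately using the recursive formula (\ref{EQ:rho-props}) together with the surjectivity statements of Lemma~\ref{L:tau-alpha-props}, and then to verify the second equality as a short bookkeeping computation of carries in binary addition.

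I would begin with the second equality, which is purely combinatorial. By Lemma~\ref{L:F-props-varrho}(a), $p=|\varrho(\alpha)|=m-\theta_\alpha$, where $\theta_\alpha\in\{0,1\}$ encodes the position of $\alpha_1$ relative to $\omega$. If $\alpha_1>\omega$ then $p+k=m+k$ and $\pi_{p+k}(\gamma\delta+1)=\gamma\delta+1$ trivially. If $\alpha_1<\omega$ then $p+k=m+k-1$, and since addition is with carry to the right, dropping the last letter $\delta_{k-1}$ of $\gamma\delta$ does not affect any bit at position $<m+k-1$ of the sum; thus $\pi_{m+k-1}(\gamma\delta+1)=\gamma\tilde\delta+1$ in $\Sigma^{m+k-1}$.

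For the inclusion $\varrho([\alpha]\cap Q_{\gamma\delta})\subseteq [\varrho(\alpha)]\cap Q_{\pi_{p+k}(\gamma\delta+1)}$, iterating (\ref{EQ:rho-props}) gives, for any $\beta\in Q^k$,
\begin{equation*}
 \varrho(\alpha\beta)=\varrho(\alpha)\,\tau_\alpha(\beta_0)\,\tau_{\alpha\beta_0}(\beta_1)\,\cdots\,\tau_{\alpha\beta_0\dots\beta_{k-2}}(\beta_{k-1}),
\end{equation*}
so $\varrho(\alpha\beta)\in[\varrho(\alpha)]$ (this is also Lemma~\ref{L:F-props-varrho}(c)). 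For the class, apply Lemma~\ref{L:F-RPD-rho} to $\alpha\beta\in Q_{\gamma\delta}$: since $|\alpha\beta|\ge 2$ and $(\alpha\beta)_1=\alpha_1$, part (b) gives $\varrho(\alpha\beta)\in Q_{\gamma\delta+1}$ when $\alpha_1>\omega$, while part (a) gives $\varrho(\alpha\beta)\in Q_{(\gamma\delta)^{\widetilde{\,}}+1}=Q_{\gamma\tilde\delta+1}$ when $\alpha_1<\omega$.

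For the reverse inclusion I would construct the preimage inductively. Given $\beta'\in[\varrho(\alpha)]\cap Q_{\pi_{p+k}(\gamma\delta+1)}$, write $\beta'=\varrho(\alpha)\zeta_0\zeta_1\dots\zeta_{k-1}$. Using the carry-arithmetic identity $(\gamma\tilde\delta+1)_{m-1+j}=\last{\gamma\delta_0\dots\delta_{j-1}}$ (respectively $(\gamma\delta+1)_{m+j}=\last{\gamma\delta_0\dots\delta_j}$) one checks that in the $\alpha_1<\omega$ case $\zeta_j\in Q_{\last{\gamma\delta_0\dots\delta_{j-1}}}$, while in the $\alpha_1>\omega$ case $\zeta_j\in Q_{\last{\gamma\delta_0\dots\delta_j}}$. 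Now choose $\beta_j\in Q_{\delta_j}$ inductively so that $\tau_{\alpha\beta_0\dots\beta_{j-1}}(\beta_j)=\zeta_j$: in the $\alpha_1<\omega$ case Lemma~\ref{L:tau-alpha-props}(a) supplies such $\beta_j$ because $\tau_{\alpha\beta_0\dots\beta_{j-1}}$ maps $Q_{\delta_j}$ \emph{onto} $Q_{\last{\gamma\delta_0\dots\delta_{j-1}}}$; in the $\alpha_1>\omega$ case Lemma~\ref{L:tau-alpha-props}(b) provides the analogous surjection onto $Q_{\last{\gamma\delta_0\dots\delta_j}}$. The resulting $\beta=\beta_0\dots\beta_{k-1}\in Q_\delta$ satisfies $\varrho(\alpha\beta)=\beta'$, completing the equality.

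The main obstacle is the careful matching of parity classes: one needs to see that the class required by $\pi_{p+k}(\gamma\delta+1)$ at position $p+j$ agrees exactly with the class $\last{\gamma\delta_0\dots\delta_{j-1}}$ (or $\last{\gamma\delta_0\dots\delta_j}$) into which $\tau_{\alpha\beta_0\dots\beta_{j-1}}$ maps. Once this identification is made — which amounts to noting that the bit at position $i$ of $\sigma+1$ depends only on the first $i+1$ bits of $\sigma$ — everything else reduces to surjectivity of the index maps.
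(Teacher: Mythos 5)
Your proof is correct, and it amounts to unrolling the induction that the paper runs on $k=\abs{\delta}$. The paper proves the base case $k=1$ as equation (\ref{EQ:F-trans-rho-1}) and then reduces a word $\delta d$ to $\delta$ by decomposing $[\alpha]\cap Q_{\gamma\delta d}$ into the cylinders $[\alpha\beta]\cap Q_{\gamma\delta d}$ with $\beta\in Q_\delta$; you instead verify the two inclusions directly, constructing the preimage of a given $\beta'\in[\varrho(\alpha)]\cap Q_{\pi_{p+k}(\gamma\delta+1)}$ one letter at a time using the surjectivity of $\tau_{\alpha\beta_0\dots\beta_{j-1}}$ onto $Q_{\last{\gamma\delta_0\dots\delta_{j-1}}}$ (respectively $Q_{\last{\gamma\delta_0\dots\delta_j}}$) supplied by Lemma~\ref{L:tau-alpha-props}(a) and (b). The mathematical content is the same — in particular you correctly note that the equality in Lemma~\ref{L:tau-alpha-props}(a) holds because $(\alpha\beta_0\dots\beta_{j-1})_1=\alpha_1$, so the relevant index map was built from Lemma~\ref{L:tau-12} with the full surjectivity. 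Your explicit verification of the second equality $\pi_{p+k}(\gamma\delta+1)=\gamma\tilde\delta+1$ or $\gamma\delta+1$ via rightward-carry truncation is a welcome addition, since the paper leaves that bookkeeping implicit.
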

\begin{proof}
We give the proof only for the case when $\alpha_1<\omega$; the other case can be described analogously.
First take any $d\in\Sigma$. Then, by (\ref{EQ:rho-props}),
$$
 \varrho([\alpha]\cap Q_{\gamma d})
 = \{\varrho(\alpha r): r\in Q_d\}
 = \{\varrho(\alpha) \tau_\alpha(r): r\in Q_d\}
 = \{\varrho(\alpha) s: s\in \tau_\alpha(Q_d)\}.
$$
Since $\tau_\alpha(Q_d)=Q_{\last{\gamma}}$
by Lemma~\ref{L:tau-alpha-props}(a),
we have
\begin{equation}\label{EQ:F-trans-rho-1}
  \varrho([\alpha]\cap Q_{\gamma d}) =
  [\varrho(\alpha)] \cap Q_{\gamma+1}.
\end{equation}

Now we continue by induction on $k=\abs{\delta}$. We have proved the lemma for $k=1$.
Assume now that $k\ge 1$ and that the lemma is true for every $\delta$ with $\abs{\delta}\le k$.
Take arbitrary $\delta\in \Sigma^+$ and $d\in \Sigma$. Then
\begin{equation*}
  \varrho([\alpha]\cap Q_{\gamma\delta d})
  = \bigcup_{\beta\in Q_\delta}  \varrho([\alpha\beta] \cap Q_{\gamma\delta d}).
\end{equation*}
So, by (\ref{EQ:F-trans-rho-1}),
\begin{equation*}
  \varrho([\alpha]\cap Q_{\gamma\delta d})
  = \bigcup_{\beta\in Q_\delta}  \left\{ \varrho([\alpha\beta]) \cap Q_{\gamma\delta+1} \right\}
  = \varrho([\alpha]\cap Q_{\gamma\delta})
    ~\cap ~ Q_{\gamma\delta+1} .
\end{equation*}
By the induction hypothesis, $\varrho([\alpha]\cap Q_{\gamma\delta})=[\varrho(\alpha)]\cap Q_{\gamma\tilde{\delta}+1}$.
Since $(\delta d)^{\widetilde{ }}=\delta$
and $Q_{\gamma\tilde{\delta}+1}\supseteq Q_{\gamma{\delta}+1}$,
the lemma is proved for $\delta'=\delta d$. Thus, by induction, the lemma is proved for every
$\delta\in\Sigma^+$.
\end{proof}

\begin{lemma}\label{L:F-trans-rho-itetate}
Let $\alpha\in Q_\gamma$ and $n\ge 1$ be such that $\abs{\varrho^{n-1}(\alpha)}\ge 2$;
put $p=\abs{\varrho^n(\alpha)}$.
Then, for every $k\ge 1$ and $\delta\in\Sigma^k$,
\begin{equation*}
  \varrho^n([\alpha]\cap Q_{\gamma\delta})
  = [\varrho^n(\alpha)] \cap Q_{\pi_{p+k}(\gamma\delta+n)}.
\end{equation*}
Consequently, for every $k\ge 1$,
\begin{equation*}
  \varrho^n\left([\alpha]\cap (Q_{\gamma}\times Q^k) \right)
  = [\varrho^n(\alpha)]
    \cap
    \left(  Q_{\gamma+n}\times Q^{p+k-\abs{\alpha}} \right).
\end{equation*}
\end{lemma}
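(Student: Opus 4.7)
The natural strategy is induction on $n\ge 1$. The base case $n=1$ is precisely Lemma~\ref{L:F-trans-rho}, applicable since $|\varrho^0(\alpha)|=|\alpha|\ge 2$. For the inductive step, I would factor $\varrho^n = \varrho \circ \varrho^{n-1}$, invoke the induction hypothesis at level $n-1$ (valid because $|\varrho^{n-1}(\alpha)|\ge 2$ and the sequence $(|\varrho^j(\alpha)|)_j$ is non-increasing by Lemma~\ref{L:F-props-varrho}(d)), and then apply Lemma~\ref{L:F-trans-rho} once more to $\varrho^{n-1}(\alpha)$ (whose length is $\ge 2$). The IH gives
$$\varrho^{n-1}([\alpha]\cap Q_{\gamma\delta}) = [\varrho^{n-1}(\alpha)] \cap Q_{\eta}, \qquad \eta = \pi_{p'+k}(\gamma\delta+n-1),$$
with $p'=|\varrho^{n-1}(\alpha)|$. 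Writing $\eta=\gamma'\delta'$ with $\gamma'\in\Sigma^{p'}$ and $\delta'\in\Sigma^k$, one applies Lemma~\ref{L:F-trans-rho} with $\varrho^{n-1}(\alpha)$ in place of $\alpha$ and $\delta'$ in place of $\delta$, and then identifies the right-hand side via
$$\pi_{p+k}(\gamma'\delta'+1) \;=\; \pi_{p+k}\bigl(\pi_{p'+k}(\gamma\delta+n-1)+1\bigr) \;=\; \pi_{p+k}(\gamma\delta+n),$$
which holds because addition and $\pi_j$ commute on $\Sigma^m$ whenever $m\ge j$.

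For the consequence, I would take the union over $\delta\in\Sigma^k$ and reduce to showing
$$\bigcup_{\delta\in\Sigma^k} Q_{\pi_{p+k}(\gamma\delta+n)} = Q_{\gamma+n} \times Q^{p+k-|\alpha|}.$$
Writing $\gamma\delta+n\in\Sigma^{|\alpha|+k}$ in terms of integer values, its first $|\alpha|$ letters always form $\gamma+n\in\Sigma^{|\alpha|}$ (independently of $\delta$), while its last $k$ letters equal $\delta+c$ in $\Sigma^k$ for a fixed overflow constant $c$ determined only by $\gamma$ and $n$. As $\delta$ ranges over $\Sigma^k$, so does $\delta+c$, and hence $\pi_{p+k-|\alpha|}(\delta+c)$ sweeps out all of $\Sigma^{p+k-|\alpha|}$, yielding the claimed identity.

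The main technical obstacle is bookkeeping: in order to apply Lemma~\ref{L:F-trans-rho} at the inductive step, one needs that the letter class of $\varrho^{n-1}(\alpha)$ actually equals $\gamma'=\pi_{p'}(\gamma+n-1)$. This is not supplied directly by any earlier lemma and is best established as an auxiliary claim, proved by a separate induction on $n$ using Lemma~\ref{L:F-RPD-rho} (splitting into the cases $\alpha_1<\omega$ and $\alpha_1>\omega$) together with the identity $\pi_j(\mu+1)=\pi_j(\mu)+1$ valid for any $\mu\in\Sigma^m$ with $m\ge j$. Once this identification is in place, chaining the two applications above and unfolding the union over $\delta$ finishes the proof.
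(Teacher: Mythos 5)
Your proof takes the same inductive route the paper sketches (``follows by induction from Lemma~\ref{L:F-trans-rho}'' plus the $\pi$-commutativity identities), and your treatment of the consequence via the carry argument matches the paper's union formula. The bookkeeping point you flag, the letter class of $\varrho^{n-1}(\alpha)$, is indeed glossed over in the paper; your separate induction via Lemma~\ref{L:F-RPD-rho} works, but note it can also be read off from the induction hypothesis itself, since the non-empty set $[\varrho^{n-1}(\alpha)]\cap Q_\eta$ (with $\eta=\pi_{p'+k}(\gamma\delta+n-1)$) forces $\varrho^{n-1}(\alpha)\in Q_{\pi_{p'}(\eta)}=Q_{\pi_{p'}(\gamma+n-1)}$.
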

Note that $(\abs{\alpha}-\abs{\varrho^n(\alpha)})$ is the number of integers $0\le i<n$ for which $(\varrho^i(\alpha))_1<\omega$.
\begin{proof}
The first statement follows by induction from Lemma~\ref{L:F-trans-rho}
(one needs to use that $\pi_l(\nu+m)=\pi_l(\nu)+m$ and $\pi_{l'}\circ\pi_l=\pi_{l'}$ for every
$m,\nu$ and $l'<l\le\abs\nu$).
To prove the second one,
it suffices to realize that $\bigcup_{\delta\in\Sigma^k} Q_{\gamma\delta}=Q_\gamma\times Q^k$
and $\bigcup_{\delta\in\Sigma^k} Q_{\pi_{p+k}(\gamma\delta+n)}=Q_{\gamma+n}\times Q^{p+k-\abs{\gamma}}$.
\end{proof}

\begin{lemma}\label{L:F-trans-rho-itetate1}
 For every $s\in Q_c$ ($c\in\Sigma$), $\delta\in \Sigma^k$ ($k\ge 1$) and $n\ge 1$,
\begin{equation*}
  \varrho^n\left([s]\cap Q_{c \delta} \right)
  \supseteq
  [\varrho^n(s)]  \cap  Q_{c\delta+n}
\quad\text{and}\quad
  \varrho^n\left([s]\cap (Q_{c} \times Q^k) \right)
  \supseteq
  [\varrho^n(s)]  \cap  \left(Q_{c+n} \times Q^k \right).
\end{equation*}
\end{lemma}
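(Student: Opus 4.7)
The plan is to prove the first inclusion by induction on $n$, and to derive the second inclusion from it via a short union argument. For the base case $n=1$, I fix $\eta \in [\varrho(s)] \cap Q_{c\delta+1}$, write $\eta = \varrho(s)\eta'_0\eta'_1\cdots\eta'_{k-1}$, and aim to build $\beta = \beta_0\beta_1\cdots\beta_{k-1} \in Q_\delta$ with $\varrho(s\beta) = \eta$. First I use Lemma~\ref{L:tau-alpha-props}(c2) to pick $\beta_0 \in Q_{\delta_0}\cap(\omega,1)$ with $\tau_s''(\beta_0) = \eta'_0$; this is legitimate because $\tau_s''(Q_{\delta_0}\cap(\omega,1)) = Q_{\last{c\delta_0}}$ and $\eta'_0 \in Q_{(c\delta+1)_1} = Q_{\last{c\delta_0}}$, the last equality being the instance $j=1$ of the identity $(\gamma+1)_j = \last{\gamma_0\cdots\gamma_j}$, which is immediate from (\ref{EQ:e-gamma}) applied to the length-$(j+1)$ prefix of $\gamma$. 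Then, for $i = 1,2,\ldots,k-1$ in turn, I apply Lemma~\ref{L:tau-alpha-props}(b) to $\alpha = s\beta_0\cdots\beta_{i-1}$ (of length at least $2$ with second letter $\beta_0>\omega$) to pick $\beta_i \in Q_{\delta_i}$ with $\tau_\alpha(\beta_i) = \eta'_i$; the same parity identity ensures this is possible. Since $\beta_0>\omega$, (\ref{EQ:rho-props}) combined with Lemma~\ref{L:F-props-varrho} yields $\varrho(s\beta) = \varrho(s)\,\tau_s''(\beta_0)\,\tau_{s\beta_0}(\beta_1)\cdots\tau_{s\beta_0\cdots\beta_{k-2}}(\beta_{k-1}) = \eta$.

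For the inductive step, assume the first inclusion holds for $n-1$, so $\varrho^{n-1}([s]\cap Q_{c\delta}) \supseteq [\varrho^{n-1}(s)]\cap Q_{c\delta+n-1}$. Write $c\delta+n-1 = c'\delta'$ with $c'\in\Sigma$ and $\delta'\in\Sigma^k$. Iterating Lemma~\ref{L:F-RPD-rho}(b) gives $\varrho^{n-1}(s)\in Q_{c+(n-1)\bmod 2} = Q_{c'}$, so the base case applied to $s'=\varrho^{n-1}(s)$ with $c',\delta'$ yields $\varrho([\varrho^{n-1}(s)]\cap Q_{c'\delta'}) \supseteq [\varrho^n(s)] \cap Q_{c'\delta'+1} = [\varrho^n(s)] \cap Q_{c\delta+n}$. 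Applying $\varrho$ to both sides of the induction hypothesis and chaining these two inclusions closes the induction. The second inclusion then follows by decomposing $[s]\cap(Q_c\times Q^k) = \bigsqcup_{\delta\in\Sigma^k}([s]\cap Q_{c\delta})$, applying $\varrho^n$ together with the first inclusion to each piece, and observing that $\delta\mapsto c\delta+n$ is a bijection of $\Sigma^k$ onto $\{(c+n)\bmod 2\}\times\Sigma^k$ (the translate by $n$, inside the cyclic group $\Sigma^{k+1}$, of the coset whose first letter is $c$), so $\bigcup_{\delta}Q_{c\delta+n} = Q_{c+n}\times Q^k$.

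The main bookkeeping obstacle is verifying the parity identity $(c\delta+1)_{i+1} = \last{c\delta_0\cdots\delta_i}$ used to match each target letter $\eta'_i$ with the image of the appropriate $\tau$-map. Once this is granted, the construction is a routine letter-by-letter use of Lemma~\ref{L:tau-alpha-props}, Lemma~\ref{L:F-props-varrho}, and (\ref{EQ:rho-props}).
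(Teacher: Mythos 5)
Your proof is correct and follows essentially the same approach as the paper's: a letter-by-letter construction via Lemma~\ref{L:tau-alpha-props}(c2) and (b) together with Lemma~\ref{L:F-props-varrho}(b) for the base case $n=1$, followed by induction on $n$ and a union over $\delta\in\Sigma^k$ for the second inclusion. The paper's version phrases the base case "forwards" (computing $\varrho(s\alpha)$ for all $\alpha$ with $\alpha_0>\omega$ and observing the image covers the target), whereas you work "backwards" (given a target $\eta$, constructing $\beta$ with $\varrho(s\beta)=\eta$), but this is the same underlying argument, and your verification of the parity identity $(\gamma+1)_j=\last{\gamma_0\cdots\gamma_j}$ and of the inductive step is a correct filling-in of details the paper leaves implicit.
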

\begin{proof}
It suffices to prove the first inclusion. For $n=1$ this immediately follows from Lemma~\ref{L:tau-alpha-props}(c2)
and (b).
(In fact, $\varrho([s]\cap Q_{c\delta})\supseteq\{\varrho(s\alpha): \alpha\in Q_\delta, \alpha_0>\omega\}$
and, for $\alpha=\alpha_0\dots\alpha_{k-1}$ with $\alpha_0>\omega$, $\varrho(s\alpha)=\tau_\emptyword(s)\tau_s''(\alpha_0)
\tau_{s\alpha_0}(\alpha_1)\dots\tau_{s\alpha_0\dots\alpha_{k-2}}(\alpha_{k-1})$ by Lemma~\ref{L:F-props-varrho}(b).
So, by Lemma~\ref{L:tau-alpha-props}(c2) and (b),
$\varrho([s]\cap Q_{c\delta})\supseteq\tau_\emptyword(s)\times Q_{\last{c\delta_0}}\times Q_{\last{c\delta_0\delta_1}}
\times\dots\times Q_{\last{c\delta_0\dots\delta_{k-1}}} = [\tau_\emptyword(s)]\cap Q_{c\delta+1}$.)
The proof for general $n$ is by simple induction.
\end{proof}

\begin{lemma}\label{L:F-trans-Ralpha-is-Q+0}
 For every $\alpha\in Q_\gamma$ with $k=\abs{\alpha}\ge 1$, every $s\in Q_d$ ($d\in\Sigma$) and every
 $\delta\in\Sigma^{k}$ there are $n\ge 1$, $c\in\Sigma$ and $u\in Q_c$ with
 \begin{equation*}
    \varrho^n(\alpha u) = s
    \quad\text{and}\quad
    \gamma c+n = d\delta.
 \end{equation*}
 Consequently, $R_\alpha\supseteq Q$.
\end{lemma}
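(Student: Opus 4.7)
The plan is to prove the lemma by induction on $k=\abs{\alpha}$, exploiting the freedom in choosing the appended letter $u$ together with the surjectivity properties of the index maps $\tau_\alpha$ summarized in Lemma~\ref{L:tau-alpha-props}, and the fine control over iterates of $\xi=\tau_\emptyword$ from Lemma~\ref{L:tau-0}.

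For the base case $k=1$, write $\alpha=\alpha_0\in Q_\gamma$. My strategy is to perform $N$ length-preserving iterations of $\varrho$ on $\alpha_0 u$ (keeping length~$2$ by arranging each intermediate ``second letter'' to lie in $(\omega,1)$, which is possible because $\tau''_r$ surjects onto $[0,1]$ by Lemma~\ref{L:tau-12}), then force a length-reducing step, and finally iterate $\xi$. After $N$ length-preserving steps the state is $\xi^N(\alpha_0)\,v_N$ in class $Q_{\gamma c+N}$, and by Lemma~\ref{L:tau-alpha-props} we can realize arbitrary $v_N$ in the appropriate class; picking $v_N<\omega$ in the next step yields $\varrho^{N+1}(\alpha u)=\tau'_{\xi^N(\alpha_0)}(v_N)=:w$, freely chooseable within $Q_{\gamma+N+1}\cap(\varphi_\emptyword(\xi^N(\alpha_0)),\xi^{N+1}(\alpha_0))$ by Lemma~\ref{L:tau-alpha-props}(c1). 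Further iteration gives $\varrho^{N+1+m}(\alpha u)=\xi^m(w)$. Lemma~\ref{L:tau-0}(g) applied with $r=\alpha_0$ and a suitable parameter $d$ produces integers $N,h\ge 0$ and a point $t$ in exactly this intersection with $\xi^h(t)=s$; taking $w:=t$, $m:=h$, and $n:=N+h+1$ gives $\varrho^n(\alpha u)=s$.

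For the inductive step, $\alpha\in Q_\gamma$ has $\abs{\alpha}=k+1\ge 2$, and I apply $\varrho$ once: $\varrho(\alpha u)=\varrho(\alpha)\tau_\alpha(u)$. When $\alpha_1<\omega$ the length drops, $\varrho(\alpha)\in Q_{\tilde\gamma+1}$ has length $k$, and since $\tau_\alpha$ surjects onto $Q_{\last\gamma}$ by Lemma~\ref{L:tau-alpha-props}(a) the induction hypothesis applied to $\varrho(\alpha)$ with appropriately chosen class parameter provides the remaining iterations, with (\ref{EQ:e-gamma}) and Lemma~\ref{L:F-RPD-rho} lifting the class identity one dimension to recover $\gamma c+n=d\delta$. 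When $\alpha_1>\omega$ the length is preserved on the first step, but by Lemma~\ref{L:F-props-varrho}(d) it must eventually drop, at which point the preceding case applies.

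The main obstacle will be matching the class condition $\gamma c+n=d\delta$ in $\Sigma^{k+1}$ simultaneously with the value condition $\varrho^n(\alpha u)=s$; since varying $c$ over $\Sigma$ changes $\gamma c+n$ by $2^k$, the effective constraint is $n\equiv d\delta-\gamma\pmod{2^k}$, which becomes more restrictive as $k$ grows. This is why the flexibility provided by Lemma~\ref{L:tau-0}(g) (choice of the target class $d$, and infinitely many admissible backward-trajectory points of $s$ near $1$) is crucial: it supplies enough degrees of freedom to arrange the residue of $n=N+h+1$ in the base case (and, by induction, of the total iteration count). The consequence $R_\alpha\supseteq Q$ is then immediate, since for any $\beta\in Q$ the construction with $s=\beta$ produces $\varrho^n(\alpha u)=\beta$, which trivially starts with $\beta$.
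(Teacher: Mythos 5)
Your proposal reproduces the overall architecture of the paper's proof: the base case $k=1$ is handled by iterating on the second coordinate, collapsing via $\tau'$, and invoking Lemma~\ref{L:tau-0}(g), and the general case is handled by induction with a backward construction of $u$ through the $\tau_\alpha$'s. For $k=1$ this is the argument in the paper almost verbatim (your $N$-step iteration to $\xi^N(\alpha_0)\,v_N$ and then the drop to $w=\tau'_{\xi^N(\alpha_0)}(v_N)$ is exactly the chain $\varrho^k(\alpha u)=r'u'$, $\varrho(r'u')=t$, $\varrho^h(t)=s$), and your inductive case analysis ($\alpha_1<\omega$ drop at once vs.\ $\alpha_1>\omega$, iterate until it drops) is a one-step-at-a-time rephrasing of the paper's passage to $\bar\alpha=\varrho^{p+1}(\alpha r)$.

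However, the decisive technical point in the inductive step is not carried out. You flag it yourself as ``the main obstacle'': when you apply the induction hypothesis to the shortened word (the paper's $\bar\alpha$, your $\varrho(\alpha)$), you receive a class $\bar c$ determined by the constraint $\bar\gamma\bar c+\bar n=d\delta$, and to pull $\bar u\in Q_{\bar c}$ back through $\tau_{\alpha^p r^p}$ (whose image, by Lemma~\ref{L:tau-alpha-props}(a), is the single class $Q_{\last{\gamma e+p}}$) you need $\bar c=\last{\gamma e+p}$. This is precisely the residue constraint $\gamma e+n=d\delta$, i.e.\ the truncated form of the statement being proved. Saying that Lemma~\ref{L:tau-0}(g) ``supplies enough degrees of freedom'' does not discharge this: (g) controls a parity in the base case, but the induction must propagate a constraint mod $2^k$ at level $k$, and the paper does this not by appealing again to the freedom in (g) but by exhibiting $\bar\alpha\bar u$ as a point of the image $\varrho^{p+1}\bigl([\alpha r]\cap(Q_{\gamma e}\times Q)\bigr)=[\bar\alpha]\cap Q_{\gamma e+(p+1)}$ via Lemma~\ref{L:F-trans-rho-itetate}, which forces $\bar\gamma\bar c=\gamma e+(p+1)$ and hence $\gamma e+n=d\delta$. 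You cite (\ref{EQ:e-gamma}) and Lemma~\ref{L:F-RPD-rho} as ``lifting the class identity one dimension'' but never produce this chain of identities, and without it the argument that $u^p$ (and then $u\in Q_c$) exists is circular. As written, the proposal is a correct outline that locates the hard step but does not resolve it.
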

\begin{proof}
Fix arbitrary $d\in \Sigma$ and $s\in Q_d$.
We first prove the lemma for $k=1$. To this end, take any $\alpha\in Q_\gamma$ with $\abs{\alpha}=1$
and any $\delta\in\Sigma$.
Recall that $\varrho(t)=\tau_\emptyword(t)$ for every $t\in Q$ and $\tau_\emptyword$ was given by Lemma~\ref{L:tau-0}.
By Lemma~\ref{L:tau-0}(g) there are $h,k\in\NNN_0$ and
$t\in Q_{\gamma+(k+1)}\cap (\varphi_\emptyword(r'),\tau_\emptyword(r'))$
(where $r'=\varrho^k(\alpha)=\tau_\emptyword^k(\alpha)$) such that
$\varrho^h(t)=s$.
Put $n=k+1+h$ and take $c\in\Sigma$ such that $\gamma c + n=d\delta$.
By Lemma~\ref{L:tau-alpha-props}(c1) there is
$u'\in Q_{\last{\gamma c+k}}$, $u'<\omega$ with $\tau'_{r'}(u')=t$; thus $\varrho(r'u')=t$.
Repeated application of Lemma~\ref{L:tau-alpha-props}(c2) gives the existence
of $u\in Q_c$ with $\varrho^k(\alpha u)=r'u'$.
So $\varrho^{n}(\alpha u)=s$ and, by the choice of $c$, $\gamma c+n = d\delta$. Thus the case $k=1$ is proved.

Assume now that $k\ge 1$ and the lemma is true for every $\alpha$ with $\abs{\alpha}\le k$.
Take any $\alpha'=\alpha r\in Q_{\gamma e}$ with $\abs{\alpha}=k$, $r\in Q_e$ ($\abs{r}=1$), and any
$\delta'=\delta f$ with $\abs{\delta}=k$ and $f\in\Sigma$.
Let $p\ge 0$ be such that $k+1=\abs{\alpha r} = \abs{\varrho^p(\alpha r)}>\abs{\varrho^{p+1}(\alpha r)}=k$.
Put $\bar\alpha=\varrho^{p+1}(\alpha r)$.
Since $\abs{\bar\alpha}=k$ and $\bar\alpha\in Q_{\bar\gamma}$ ($\bar{\gamma}=\tilde\gamma+p+1$),
by the induction hypothesis there are $\bar{n}\ge 1$, $\bar{c}\in\Sigma$ and
$\bar{u}\in Q_{\bar c}$ with
\begin{equation*}
    \varrho^{\bar n}(\bar \alpha \bar u) = s
    \qquad
    \text{and}
    \qquad
    \bar\gamma \bar c + \bar n = d \delta.
\end{equation*}
By Lemma~\ref{L:F-trans-rho-itetate},
$\bar\alpha \bar u\in \varrho^{p+1}\left([\alpha r]\cap (Q_{\gamma e}\times Q) \right)
=[\bar\alpha] \cap Q_{\gamma e + (p+1)}$.
Thus, by Lemma~\ref{L:F-RPD-rho},
$\gamma e + (p+1)=\bar\gamma \bar c$,
and so $\gamma e + n=d\delta$, where $n=p+1+\bar n$.
Hence there is $c\in\Sigma$ with $\gamma e c + n = d\delta f$.
Write $\varrho^p(\alpha r)$ in the form $\alpha^p r^p$ with $\abs{\alpha^p}=k$, $r^p\in Q$.
By Lemma~\ref{L:tau-alpha-props}(a) there is $u^p\in Q_{e+p}$ with $\tau_{\alpha^p r^p}(u^p)=\bar u$.
Thus $\varrho(\alpha^p r^p u^p) = \bar \alpha \bar u$.
Further, repeated application of Lemma~\ref{L:tau-alpha-props}(b) gives the existence
of $u\in Q_c$ with $\varrho^p(\alpha r u)=\alpha^p r^p u^p$. Hence
$\varrho^{n}(\alpha r u) = \varrho^{\bar n}(\bar \alpha \bar u) = s$. So the lemma is true for any
$\alpha'=\alpha r\in Q^{k+1}$. By induction, the proof is finished.
\end{proof}

\begin{lemma}\label{L:F-trans-Ralpha-is-Q+2}
 For every $\alpha\in Q^+$ we have $R_\alpha=Q^+$.
\end{lemma}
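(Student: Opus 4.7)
The plan is to derive $R_\alpha = Q^+$ directly from Lemmas~\ref{L:F-trans-Ralpha-is-Q+0} and \ref{L:F-trans-rho-itetate}; no further induction on $|\beta|$ is needed. The idea is that Lemma~\ref{L:F-trans-Ralpha-is-Q+0} already lets us realize the first letter of the target as an exact single-letter image $\varrho^n(\alpha u) = \beta_0$ with good $\Sigma$-class control, and Lemma~\ref{L:F-trans-rho-itetate} then tells us precisely which words are hit by $\varrho^n$ applied to the cylinder $[\alpha u]$ in a prescribed class.

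Given $\alpha \in Q_\gamma$ of length $m$ and target $\beta = \beta_0\beta_1\ldots\beta_k \in Q^{k+1}$ of class $\eta = \eta_0\eta_1\ldots\eta_k$, I would first apply Lemma~\ref{L:F-trans-Ralpha-is-Q+0} with single-letter target $s = \beta_0$, $d = \eta_0$, and $\delta \in \Sigma^m$ whose first $\min(m,k)$ positions coincide with $\eta_1, \eta_2, \ldots$. This gives $n \geq 1$, $c \in \Sigma$ and $u \in Q_c$ with $\varrho^n(\alpha u) = \beta_0$ and the class equation $\gamma c + n = \eta_0\delta$ in $\Sigma^{m+1}$.

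Next, I would extend $\alpha u$ by a word $v \in Q^k$ in some class $\delta' \in \Sigma^k$. Applying Lemma~\ref{L:F-trans-rho-itetate} to $\alpha u$ and iteration count $n$ (with $p = |\varrho^n(\alpha u)| = 1$) yields
$$\varrho^n\bigl([\alpha u] \cap Q_{(\gamma c)\delta'}\bigr) = [\beta_0] \cap Q_{\pi_{k+1}((\gamma c)\delta' + n)},$$
provided the length hypothesis $|\varrho^{n-1}(\alpha u)| \geq 2$ holds. A routine carry-chasing computation in $\Sigma^*$ (using $\gamma c + n = \eta_0\delta$ from above and exploiting the free high-order letters of $\delta'$ when $k > m$) shows that $\pi_{k+1}((\gamma c)\delta' + n) = \eta$ can always be arranged. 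Since $\beta \in [\beta_0] \cap Q_\eta$, some $v \in Q_{\delta'}$ satisfies $\varrho^n(\alpha u v) = \beta$, giving $\beta \in R_\alpha$.

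The main technical obstacle is the length hypothesis in Lemma~\ref{L:F-trans-rho-itetate}, which Lemma~\ref{L:F-trans-Ralpha-is-Q+0} does not guarantee. If it fails then $s' := \varrho^{n-1}(\alpha u) \in Q$ is a single letter with $\tau_\emptyword(s') = \beta_0$, and I would fall back on Lemma~\ref{L:F-trans-rho-itetate1}: the inclusion $\varrho([s'] \cap Q_{c'\delta''}) \supseteq [\beta_0] \cap Q_{c'\delta'' + 1}$, combined with the observation via Lemma~\ref{L:F-props-varrho}(c) that $\varrho^{n-1}(\alpha u v) \in [s']$ and varies sufficiently over this cylinder as $v$ ranges, still produces a $v$ with $\varrho^n(\alpha u v) = \beta$.
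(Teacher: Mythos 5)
Your overall strategy matches the paper's: apply Lemma~\ref{L:F-trans-Ralpha-is-Q+0} to arrange $\varrho^n(\alpha u)=\beta_0$ with a prescribed $\Sigma$-class constraint, then track cylinders under $\varrho^n$ to realize the rest of the target. The difficulty is exactly where you locate it --- the length hypothesis $\abs{\varrho^{n-1}(\alpha u)}\ge 2$ in Lemma~\ref{L:F-trans-rho-itetate} --- but your fallback does not resolve it.

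The case $\abs{\varrho^{n-1}(\alpha u)}=1$ is not a degenerate edge case. In the construction behind Lemma~\ref{L:F-trans-Ralpha-is-Q+0}, $n$ is assembled by first dropping the word length to $1$ and then iterating $\tau_\emptyword$ inside $Q$ for as long as is needed to reach $s$ (via Lemma~\ref{L:tau-0}(g)); generically the word sits at length $1$ for many steps, so the length hypothesis fails well before step $n-1$. Your fallback asserts that $\varrho^{n-1}(\alpha u v)$ ``varies sufficiently'' over the cylinder $[s']$ as $v$ ranges. But Lemma~\ref{L:F-props-varrho}(c) only gives \emph{containment} in $[s']$; showing that the image actually covers a full prescribed-class cylinder of $[s']$ is precisely the claim that requires the machinery of Lemmas~\ref{L:F-trans-rho-itetate} and \ref{L:F-trans-rho-itetate1} for the $n-1$ iterations --- the same problem one level down. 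You have displaced the obstacle, not removed it.

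What the paper does, implicitly in citing both lemmas together, is split at $n_0\le n$, the first index with $\abs{\varrho^{n_0}(\alpha u)}=1$. Since $\abs{\alpha u}\ge 2$ and the lengths decrease (Lemma~\ref{L:F-props-varrho}(d)), such $n_0$ exists with $\abs{\varrho^{n_0-1}(\alpha u)}\ge 2$. Lemma~\ref{L:F-trans-rho-itetate} applies to the first $n_0$ iterations and gives an exact cylinder image anchored at a single letter; Lemma~\ref{L:F-trans-rho-itetate1}, stated precisely for single-letter anchors and arbitrary iteration count, handles the remaining $n-n_0$ iterations in one application. Composing the two yields
$\varrho^n\bigl([\alpha u]\cap(Q_{\gamma c}\times Q^{l})\bigr)\supseteq [s]\cap\bigl(Q_{d\delta}\times Q^{l-k}\bigr)$.
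It is this composition at $n_0$ --- rather than a dichotomy on whether $\abs{\varrho^{n-1}(\alpha u)}\ge 2$ --- that closes the argument.
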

\begin{proof}
Take any $\alpha\in Q_\gamma$ ($k=\abs{\alpha}\ge 1$) and $s\in Q$; let $d$ be such that $s\in Q_d$.
By Lemma~\ref{L:F-trans-Ralpha-is-Q+0}, for every $\delta\in\Sigma^k$
there are $n\ge 1$, $c\in\Sigma$ and $u\in Q_c$ with $\varrho^n(\alpha u)=s$ and $\gamma c+n=d\delta$.
Then, by Lemmas~\ref{L:F-trans-rho-itetate} and \ref{L:F-trans-rho-itetate1},
$$
 R_\alpha\supseteq
 \varrho^n\left( [\alpha u] \cap (Q_{\gamma c} \times Q^{l})   \right)
 \supseteq
 [s] \cap \left(  Q_{d\delta} \times Q^{l-k} \right)
$$
for every $l\ge k$.
Since $\delta\in\Sigma^k$ is arbitrary, we have
$$
 R_\alpha
 \supseteq
 [s] \cap \left(  Q_{d} \times Q^{l} \right)
 = [s]\cap Q^{l+1}
 \qquad\text{for every } l\ge 1.
$$
Thus $R_\alpha\supseteq [s]$ for every $s\in Q$, and so $R_\alpha=Q^+$.
\end{proof}

Lemmas~\ref{L:F-trans-Ralpha-is-Q+2} and \ref{L:F-trans-Ralpha-implies-trans} immediately give
the following proposition.

\begin{proposition}\label{P:F-trans}
The constructed map $F:X\to X$ is transitive.
\end{proposition}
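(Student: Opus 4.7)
The plan is to simply splice together the two lemmas immediately preceding the proposition. Lemma~\ref{L:F-trans-Ralpha-implies-trans} provides the criterion that $F$ is transitive if and only if $R_\alpha = Q^+$ for every $\alpha \in Q^+$, where $R_\alpha$ collects all prefixes appearing in iterates $\varrho^n(\alpha\alpha')$. Lemma~\ref{L:F-trans-Ralpha-is-Q+2} establishes exactly this equality $R_\alpha = Q^+$ for every $\alpha \in Q^+$. So the proof of the proposition is a one-line invocation of these two results.

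In more detail, I would argue as follows. Fix non-empty open sets $U,V \subseteq X$. Since the cylinders $\{X_\beta\setminus\{a_\beta\} : \beta \in Q^+\}$ form a basis for the topology of $X$ outside the meager set of branch points, I can pick $\alpha,\beta \in Q^+$ with $\abs{\alpha}\ge 2$ such that $X_\alpha \subseteq U$ and $X_\beta \subseteq V$ (this is the open sets argument already carried out inside the proof of Lemma~\ref{L:F-trans-Ralpha-implies-trans}). By Lemma~\ref{L:F-trans-Ralpha-is-Q+2} we have $\beta \in R_\alpha$, so there exist $\alpha' \in Q^\infty$ and $n \in \NNN$ such that $\varrho^n(\alpha\alpha')$ starts with $\beta$. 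Using the definition (\ref{EQ:def-F-Xinfty}) of $F$ on $X^{(\infty)}$, the point $x = b_{\alpha\alpha'}$ belongs to $X_\alpha \subseteq U$ and satisfies $F^n(x) = b_{\varrho^n(\alpha\alpha')} \in X_\beta \subseteq V$, hence $F^n(U) \cap V \neq \emptyset$.

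There is essentially no obstacle at this stage, since all the real work has already been done. The genuinely hard part (which was already handled in Lemmas~\ref{L:F-trans-Ralpha-is-Q+0}--\ref{L:F-trans-Ralpha-is-Q+2}) is controlling the combinatorics of $\varrho$: one has to show that, starting from any prefix $\alpha$, the orbit of the cylinder $[\alpha]$ under $\varrho$ eventually meets every cylinder $[s]$ with the correct parities of denominators, which required the careful inductive use of Lemma~\ref{L:tau-alpha-props} and the surjectivity properties built into the auxiliary maps $\tau_\alpha$ and $\xi = \tau_\emptyword$ from Lemmas~\ref{L:tau-0} and \ref{L:tau-12}. Given all that machinery, Proposition~\ref{P:F-trans} is simply the assembly step and needs no further argument.
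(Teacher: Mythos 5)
Your proposal matches the paper exactly: the paper states that Proposition~\ref{P:F-trans} follows immediately from Lemmas~\ref{L:F-trans-Ralpha-is-Q+2} and \ref{L:F-trans-Ralpha-implies-trans}, which is precisely the assembly you describe. The extra detail you unfold (choosing $X_\alpha\subseteq U$, $X_\beta\subseteq V$ and using $b_{\alpha\alpha'}$) is just the content of Lemma~\ref{L:F-trans-Ralpha-implies-trans} made explicit, so there is no difference in approach.
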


\subsection{Periodic points of $F$}\label{SS:F-period}
\begin{lemma}\label{L:F-props-arc-in-Xm} Let $m\ge 1$ and
$x\in X^{(m)}$. Then for every sufficiently large $n$ there is $s_n\in Q$ such that
$F^n(x)\in A_\emptyword\cup A_{s_n}$;
moreover, $\lim_n s_n = 1$. Consequently, for every $x\in\bigcup_m X^{(m)}$,
$\omega_F(x)$ is either $\{a_\emptyword\}$ or  $\{b_\emptyword\}$.
\end{lemma}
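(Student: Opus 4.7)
The plan is to reduce the dynamics of $F$ on $X^{(m)}$ to iteration of $\tau_\emptyword=\xi$ on $Q$, apply Lemma~\ref{L:tau-0}(f) to obtain $s_n\to 1$, and then use the first point retraction onto $A_\emptyword$ to identify $\omega_F(x)$.

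First I would prove the arc-containment claim. For $x\in A_\alpha\subseteq X^{(m)}$, repeated application of Lemma~\ref{L:F-props-Xm-alpha}(a) yields $F^n(x)\in A_{\varrho^n(\alpha)}$ as long as $\abs{\varrho^n(\alpha)}\ge 2$; Lemma~\ref{L:F-props-varrho}(d) then supplies some $n_0$ with $F^{n_0}(x)\in A_{r_0}$ for $r_0=\varrho^{n_0}(\alpha)\in Q$. The explicit formulas (\ref{EQ:def-F-X0}) and (\ref{EQ:def-F-X1}) give $F(A_\emptyword)\subseteq A_\emptyword$ and $F(A_r)\subseteq A_\emptyword\cup A_{\tau_\emptyword(r)}$ for every $r\in Q$, so an induction yields $F^n(x)\in A_\emptyword\cup A_{s_n}$ for $n\ge n_0$, where $s_n=\tau_\emptyword^{n-n_0}(r_0)$. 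Lemma~\ref{L:tau-0}(f) applied to $\xi=\tau_\emptyword$ now delivers $s_n\to 1$.

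For the consequence, take $x\in\bigcup_m X^{(m)}$ and split into two cases. If $F^{n_1}(x)=a_t\in A_\emptyword$ for some $n_1$, the $F$-invariance of $A_\emptyword$ forces $F^n(x)=a_{\varphi_\emptyword^{n-n_1}(t)}$ for every $n\ge n_1$; the dynamics of $\varphi_\emptyword$ (strictly below the diagonal on $(0,1)$, with $0,1$ fixed) then gives $\omega_F(x)=\{a_\emptyword\}$ when $t<1$ and $\omega_F(x)=\{b_\emptyword\}$ when $t=1$. Otherwise $F^n(x)\in A_{s_n}\setminus A_\emptyword$ for all large $n$, so the first point retraction $\rho_{A_\emptyword}\colon X\to A_\emptyword$ sends $F^n(x)$ to the attachment point $a_{s_n}$, which converges to $a_1=b_\emptyword$ by continuity of $t\mapsto a_t$ on $A_\emptyword$.

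The step I expect to carry the real content is identifying any subsequential limit $y$ of $(F^n(x))_n$ in the non-absorbing case. Continuity of $\rho_{A_\emptyword}$ forces $\rho_{A_\emptyword}(y)=b_\emptyword$, so it suffices to observe that $b_\emptyword\in\End(X)$: by construction, no arc $A_r$ is attached at $b_\emptyword$ (all attachment points $a_r$ lie in the interior of $A_\emptyword$), hence no component of $X\setminus A_\emptyword$ has $b_\emptyword$ on its boundary and $\rho_{A_\emptyword}^{-1}(b_\emptyword)=\{b_\emptyword\}$. This yields $y=b_\emptyword$, so $F^n(x)\to b_\emptyword$ and $\omega_F(x)=\{b_\emptyword\}$.
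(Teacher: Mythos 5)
Your proof is correct and follows essentially the same route as the paper's: reduce the dynamics on $\bigcup_m X^{(m)}$ to the iteration of $\tau_\emptyword$ via the arc-tracking identity $F^n(x)\in A_{\varrho^n(\alpha)}$, invoke Lemma~\ref{L:F-props-varrho}(d) to reach $X^{(1)}$, apply Lemma~\ref{L:tau-0}(f) to get $s_n\to 1$, and then read off the $\omega$-limit set. The only stylistic difference is the last step: the paper invokes the Hausdorff convergence $A_{s_n}\to\{b_\emptyword\}$ (which rests on local connectedness forcing $\diam A_{s_n}\to 0$), whereas you pass through the first point retraction $\rho_{A_\emptyword}$ and use that $\rho_{A_\emptyword}^{-1}(b_\emptyword)=\{b_\emptyword\}$; both arguments pivot on the same fact that $b_\emptyword$ is an end point and are of comparable weight.
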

\begin{proof}
First we show that for every $x\in X^{(m)}$ ($m\ge 2$)
there is $n\ge 0$ with $F^n(x)\in X^{(m-1)}$.
For if not, for every $n\ge 0$ there is $\alpha^n\in Q^m$ with $F^n(x)\in A_{\alpha^n}$
(use that $X^{(m)}$ is $F$-invariant).
Thus, by (\ref{EQ:rho-def}), $\alpha^{n}=\varrho^n(\alpha^0)$ for every $n$.
But then $\abs{\alpha^n}\to 1$ by Lemma~\ref{L:F-props-varrho}, a contradiction.

Applying this observation we obtain that for every $x\in X^{(m)}$ ($m\ge 2$) there is $n_0$
such that $F^{n_0}(x)\in X^{(1)}$. If there is $n_1$ with $F^{n_1}(x)\in X^{(0)}$ then
$F^n(x)\in X^{(0)}$ for every $n\ge n_1$ and we can take $(s_n)_n$ arbitrary such that $\lim s_n=1$.
Otherwise, for every $n\ge n_1$ there is $s_n$ with $F^n(x)\in A_{s_n}$. Since, by (\ref{EQ:def-F-X1}),
$s_{n+1}=\varrho(s_n)$ for every $n\ge 1$, $\lim s_n=1$ by (f) of Lemma~\ref{L:tau-0}.

The final assertion on $\omega$-limit sets of points $x$ from $X^{(m)}$ ($m\ge 1$)
now follows using the facts that $A_{s_n}\to\{b_\emptyword\}$ and
that $F|_{A_{\emptyword}}$ is given by $\varphi_\emptyword$,
an increasing homeomorphism satisfying $\varphi_\emptyword(t)<t$ for every $t\in (0,1)$.
\end{proof}

\begin{proposition}\label{P:F-period} For the map $F$ we have
$$
 \Per(F)=\Fix(F)=\{a_\emptyword,b_\emptyword\}.
$$
Thus $F$ has nowhere dense periodic points and has no periodic cut point.
\end{proposition}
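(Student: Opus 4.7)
The plan is to first verify that $a_\emptyword$ and $b_\emptyword$ are indeed fixed by $F$, and then rule out all other periodic candidates by splitting $X \setminus \{a_\emptyword, b_\emptyword\}$ into the two parts $\bigcup_m X^{(m)}$ and $X^{(\infty)}$, each of which will be treated by a different tool.

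First, by (\ref{EQ:def-F-X0}) we have $F(a_t) = a_{\varphi_\emptyword(t)}$ on $X^{(0)}$, and since $\varphi_\emptyword(0) = 0$ and $\varphi_\emptyword(1) = 1$, both $a_\emptyword = a_0$ and $b_\emptyword = a_1$ are fixed. For any point $x \in \bigcup_m X^{(m)}$ Lemma~\ref{L:F-props-arc-in-Xm} gives $\omega_F(x) \in \{\{a_\emptyword\}, \{b_\emptyword\}\}$. If $x$ were periodic, then $\omega_F(x) = \orbit_F(x)$, so the orbit would be a singleton contained in $\{a_\emptyword, b_\emptyword\}$, forcing $x \in \{a_\emptyword, b_\emptyword\}$.

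The main step is to rule out periodic points in $X^{(\infty)}$, and this is where I would use the infinite refining family of regular periodic decompositions supplied by Proposition~\ref{P:F-RPD}. Fix $x \in X^{(\infty)}$ and any $m \ge 1$. By Lemma~\ref{L:def-RPD}(e) we have $\bigcup_{\gamma \ne \delta} (D_\gamma \cap D_\delta) = X^{(m-1)}$, and since $X^{(\infty)}$ is disjoint from $X^{(m-1)}$, the point $x$ lies in a \emph{unique} $D_\gamma$ with $\gamma \in \Sigma^m$. Because $\DDd_m$ is a regular periodic decomposition with our chosen identification $\Sigma^m \cong \ZZZ/2^m\ZZZ$, we have $F^p(D_\gamma) \subseteq D_{\gamma+p \bmod 2^m}$ for every $p$. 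If $F^p(x) = x$ for some $p \ge 1$, then $x \in D_\gamma \cap D_{\gamma+p \bmod 2^m}$; uniqueness forces $\gamma + p \equiv \gamma \pmod{2^m}$, i.e.\ $2^m \mid p$. Since $m$ was arbitrary, $p = 0$, a contradiction. Hence no point of $X^{(\infty)}$ is periodic, and combining with the previous paragraph yields $\Per(F) = \Fix(F) = \{a_\emptyword, b_\emptyword\}$.

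Finally, the two consequences are immediate from the construction. The set $\{a_\emptyword, b_\emptyword\}$ is a two-point subset of the non-degenerate dendrite $X$, which has no isolated points, so it is nowhere dense. Moreover, $a_\emptyword$ and $b_\emptyword$ were specified as endpoints of $A_\emptyword = X^{(0)}$, and in fact they belong to $\End(X)$ by the description of the universal dendrite in Section~\ref{S:constr}; points of order one are not cut points, so $F$ has no periodic cut point. The only technical pitfall I anticipate is the bookkeeping that $x \in X^{(\infty)}$ really avoids every $X^{(m-1)}$, but this is immediate from the decomposition $X = \bigcup_m X^{(m)} \sqcup X^{(\infty)}$.
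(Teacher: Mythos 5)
Your proof is correct and follows essentially the same route as the paper: it splits $X\setminus\{a_\emptyword,b_\emptyword\}$ into $\bigcup_m X^{(m)}$ (handled via Lemma~\ref{L:F-props-arc-in-Xm} on $\omega$-limit sets) and $X^{(\infty)}$ (handled via the regular periodic decompositions $\DDd_m$ and Lemma~\ref{L:def-RPD}(e)). You merely spell out the divisibility argument $2^m\mid p$ for all $m$ that the paper leaves implicit when it states that no point in the interior of every $D_{\gamma^m}$ can be periodic.
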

\begin{proof}
Let $x$ be a periodic point of $F$. We claim that $x\not\in X^{(\infty)}$. For if not,
there is $\alpha\in Q^\infty$ with $x=b_\alpha\in \Per(F)$.
Then, by Lemma~\ref{L:def-RPD}(e),
for every $m$ there is $\gamma^m\in \Sigma^m$ such that $x$ belongs to the interior
of $D_{\gamma^m}$. Since every $\DDd_m$ is a regular periodic decomposition for $F$ (see Proposition~\ref{P:F-RPD}),
no such point $x$ is periodic for $F$, a contradiction.

Hence $x\in \bigcup_m X^{(m)}$. But then, by Lemma~\ref{L:F-props-arc-in-Xm},
$x\in\{a_\emptyword,b_\emptyword\}$.
Since the two points $a_\emptyword,b_\emptyword$ are fixed points of $F$, the assertion follows.
\end{proof}

\subsection{Almost openness of $F$}\label{SS:F-other}

\begin{lemma}\label{L:F-props-image-of-Xalpha-equalities}
Let $\alpha\in Q^+$, $\beta=\varrho(\alpha)$ and $\gamma$ be such that $\alpha\in Q_\gamma$. Then
$$
 F(X_\alpha)=
 \begin{cases}
   X_\beta
   &\text{if }\abs\alpha\ge 2 \text{ and } \alpha_1>\omega;
 \\
   A_\beta \cup \left(\bigsqcup_{s\in Q_{\last\gamma}} X_{\beta s}\right)
   &\text{if }\abs\alpha\ge 2 \text{ and } \alpha_1<\omega;
 \\
   [F(a_{\alpha}), a_\beta] \cup \left(\bigsqcup_{s\in Q_{\last\gamma}\cap(\varphi_\emptyword(\alpha),\beta]} X_{s}\right)
   &\text{if }\abs\alpha=1.
 \end{cases}
$$
\end{lemma}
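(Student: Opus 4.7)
The approach is to complement Lemma~\ref{L:F-props-image-of-Xalpha} with a density argument: since $X_\alpha$ is compact and $F$ is continuous, $F(X_\alpha)$ is closed in $X$, so it suffices to realize a dense subset of the proposed right-hand side inside $F(X_\alpha)$. The building blocks will be the arcs $A_{\varrho(\alpha')}=F(A_{\alpha'})$ for $\alpha'\in[\alpha]$ with $\abs{\alpha'}\ge 2$, supplied by Lemma~\ref{L:F-props-Xm-alpha}(a), together with the direct image $F(A_\alpha)$, which by (\ref{EQ:def-F-X1}) and (\ref{EQ:def-F-X2}) already contains $A_\beta$ in every case and additionally the segment $[F(a_\alpha),a_\beta]$ in Case~3.

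For Cases~1 and~2 ($\abs{\alpha}\ge 2$), I would apply Lemma~\ref{L:F-trans-rho}: taking the union over $\delta\in\Sigma^k$ of $\varrho([\alpha]\cap Q_{\gamma\delta})=[\beta]\cap Q_{\pi_{p+k}(\gamma\delta+1)}$ and checking that the resulting $\Sigma$-classes exhaust the admissible ones, one obtains that $\{\varrho(\alpha\alpha''):\alpha''\in Q^k\}$ equals $[\beta]\cap Q^{\abs{\beta}+k}$ in Case~1 and equals $\{\beta s t: s\in Q_{\last\gamma},\ t\in Q^{k-1}\}$ in Case~2. Summing over $k\ge 0$, the union of the corresponding arcs $A_{\varrho(\alpha\alpha'')}$ is dense in $X_\beta$, respectively in $A_\beta\cup\bigsqcup_{s\in Q_{\last\gamma}}X_{\beta s}$; the reverse inclusions then follow by closedness of $F(X_\alpha)$.

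For Case~3 ($\abs{\alpha}=1$) the argument is more delicate because $F|_{A_\alpha}$ is defined in two pieces at $a_{\alpha\omega}$. For the component $X_\beta$ I would run an induction analogous to Case~1, using $r>\omega$ in (\ref{EQ:rho-props}) and Lemma~\ref{L:tau-alpha-props}(c2), to obtain $\bigcup_{t\in Q^*}A_{\beta t}\subseteq F(X_\alpha)$, which is dense in $X_\beta$. For each $s\in Q_{\last\gamma}\cap(\varphi_\emptyword(\alpha),\beta)$ I would pick $r<\omega$ with $\tau'_\alpha(r)=s$ --- available by Lemma~\ref{L:tau-12}(c) in both $Q_0$ and $Q_1$ --- and then inductively select $r_1,r_2,\dots\in Q$ so that $\varrho(\alpha r r_1 r_2\dots)=s t_1 t_2\dots$ for any prescribed suffix $(t_i)$. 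The main obstacle will be the bookkeeping here: Lemma~\ref{L:tau-alpha-props}(a) pins each $\tau_{\alpha r r_1\dots r_i}$ to the single $\Sigma$-class $Q_{\last{d c_0 \dots c_i}}$ depending on the previously chosen $\Sigma$-classes of the $r_j$'s, but by (\ref{EQ:e-gamma}) the two choices $c_i\in\{0,1\}$ yield different $\Sigma$-classes at the next step; combined with the fact from Lemma~\ref{L:tau-12}(c) that each such $\tau$ maps both $R_0$ and $R_1$ onto the same image, this supplies just enough freedom at each step to realize any prescribed $\Sigma$-class of $t_{i+1}$ and hence any word in $Q^*$ as a suffix of some $\varrho(\alpha\alpha'')$. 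Closedness of $F(X_\alpha)$ then completes Case~3.
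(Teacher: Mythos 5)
Your proposal is correct and, for the two cases $\abs{\alpha}\ge 2$, follows the paper's route exactly: one inclusion is Lemma~\ref{L:F-props-image-of-Xalpha}, and the reverse follows because $\bigcup_{\alpha'\in[\alpha]}A_{\varrho(\alpha')}$ is contained in the compact set $F(X_\alpha)$ and, by Lemma~\ref{L:F-trans-rho}, is dense in the claimed right-hand side.

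Where you diverge is Case~3 ($\abs\alpha=1$). The paper handles it much more economically: it writes $X_\alpha=A_\alpha\cup\bigcup_{r\in Q}X_{\alpha r}$ and applies the already-established equalities for $\abs\alpha\ge 2$ to each $X_{\alpha r}$. For $r>\omega$ this gives $F(X_{\alpha r})=X_{\beta\tau''_\alpha(r)}$, whose union over $r$ covers $X_\beta\setminus A_\beta$; for $r<\omega$ it gives $A_{\tau'_\alpha(r)}\cup\bigsqcup_{t\in Q_{\last{\gamma c}}}X_{\tau'_\alpha(r)t}$, and the union over $r<\omega$ and $c\in\Sigma$ sweeps out $\bigsqcup_{s\in Q_{\last\gamma}\cap(\varphi_\emptyword(\alpha),\beta)}X_s$ precisely because $\tau'_\alpha$ hits every such $s$ from both $Q_0$ and $Q_1$ while $\last{\gamma0}\ne\last{\gamma1}$ supplies both $\Sigma$-classes for $t$. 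Your alternative --- a fresh $\Sigma$-class induction along the suffix --- reaches the same conclusion, and the ``bookkeeping obstacle'' you flag does resolve: since each $\tau_{\alpha r r_1\cdots r_{i-1}}$ (with second letter $r<\omega$) maps both $Q_0$ and $Q_1$ onto the single class $Q_{\last{\gamma c_0\cdots c_{i-1}}}$, the class $c_i$ of $r_i$ is unconstrained by the prescribed $t_i$ and can be chosen, via (\ref{EQ:e-gamma}), to force the class of $t_{i+1}$. It just re-derives by hand what the $\abs\alpha\ge2$ cases already encode. One small citation fix: the availability of $r<\omega$ in either $Q_0$ or $Q_1$ with $\tau'_\alpha(r)=s$ is Lemma~\ref{L:tau-alpha-props}(c1), not Lemma~\ref{L:tau-12}(c); the latter only gives $\tau(R_c)=R'_c$, and the coincidence of images rests on the specific choice $R'_0=R'_1$ made in the construction of $\tau'_\alpha$.
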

\begin{proof} 
One inclusion was proved in Lemma~\ref{L:F-props-image-of-Xalpha}.
To prove the reverse inclusion assume first that $\abs{\alpha}\ge 2$;
recall the definition (\ref{EQ:alpha-cylinder}) of the cylinder $[\alpha]$.
Then $\bigcup_{\alpha'\in[\alpha]} A_{\alpha'}$ is dense in $X_\alpha$ and
$\bigcup_{\beta'\in\varrho([\alpha])} A_{\beta'}$ is dense in $F(X_\alpha)$.
Now the result follows from Lemma~\ref{L:F-trans-rho}.

Assume now that $\abs{\alpha}=1$ and write $X_\alpha$ in the form
$
    X_\alpha = A_\alpha
       \cup \left(\bigcup_{r\in Q} X_{\alpha r} \right)
$.
By the previous case and Lemma~\ref{L:tau-alpha-props},
$$
 \bigcup_{r>\omega} F(X_{\alpha r})
 = \bigcup_{r>\omega} X_{\beta \tau_\alpha''(r)}
 =\bigcup_{s\in Q} X_{\beta s}
 \supseteq X_{\beta}\setminus A_\beta
$$
and
$$
 \bigcup_{r<\omega} F(X_{\alpha r})
 = \bigcup_{c\in\Sigma}\bigcup_{r<\omega, r\in Q_c} \left\{
   A_{\tau_\alpha'(r)}
   \cup \left(
     \bigsqcup_{t\in Q_{\last{\gamma c}}} X_{\tau_\alpha'(r) t}
   \right)
 \right\}
 =\bigcup_{s\in Q_{\last{\gamma}}\cap(\varphi_\emptyword(\alpha),\beta)} X_{s}.
$$
Since $F(A_\alpha)=[F(a_\alpha),a_\beta]\cup A_\beta$ and $\beta\in Q_{\gamma+1}=Q_{\last{\gamma}}$,
the proof is finished.
\end{proof}

Lemma~\ref{L:F-props-image-of-Xalpha-equalities} immediately implies that $F$ is almost open.
Moreover, it also implies that $F$ is not open; for example,
the image $F(X_\alpha\setminus\{a_\alpha\})$ is not open for every $\alpha$ satisfying either
$\abs\alpha=1$ or $\abs{\alpha}\ge 2$, $\alpha_1<\omega$.
So we have the final part of the main theorem.

\begin{proposition}\label{P:F-almost-open}
The constructed map $F$ is almost open, but not open.
\end{proposition}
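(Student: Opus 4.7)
The plan is to apply Lemma~\ref{L:F-props-image-of-Xalpha-equalities} directly. For both parts I first reduce to open sets of the form $X_\alpha\setminus\{a_\alpha\}$: since the branch points $\{a_\alpha:\alpha\in Q^+\}$ are dense in $X$ and $\diam(X_\alpha)\to 0$ as $|\alpha|\to\infty$ along chains in $Q^+$, every nonempty open $U\subseteq X$ contains $X_\alpha\setminus\{a_\alpha\}$ for some $\alpha\in Q^+$ with $|\alpha|\ge 2$. So it is enough to analyse $F(X_\alpha\setminus\{a_\alpha\})$ for such $\alpha$.

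For almost openness, given $X_\alpha\setminus\{a_\alpha\}\subseteq U$ with $|\alpha|\ge 2$, I would apply Lemma~\ref{L:F-props-image-of-Xalpha-equalities}. In the case $\alpha_1>\omega$ it gives $F(X_\alpha)=X_\beta$ with $\beta=\varrho(\alpha)$; in the case $\alpha_1<\omega$ the image $F(X_\alpha)$ contains $X_{\beta s}$ for every $s\in Q_{\last\gamma}$. In either case $F(X_\alpha)$ contains the open subset $X_\delta\setminus\{a_\delta\}$ of $X$, for an appropriate $\delta$. To transfer this open set into $F(X_\alpha\setminus\{a_\alpha\})$ I would check that $a_\alpha$ is the only preimage of $a_\beta$ inside $X_\alpha$: this follows from $\tau_\alpha^{-1}(0)=\{0\}$ (Lemma~\ref{L:tau-12}(b)) together with the fact that each subcontinuum $X_{\alpha r}$ is mapped into $X_{\beta\tau_\alpha(r)}$, and these subcontinua are pairwise disjoint and meet $A_\beta$ only at an interior branch point, so none of them contains $a_\beta$. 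Hence $X_\delta\setminus\{a_\delta\}\subseteq F(X_\alpha\setminus\{a_\alpha\})\subseteq F(U)$, and $F(U)$ has nonempty interior.

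For non-openness, I would fix $\alpha\in Q_\gamma$ with $|\alpha|\ge 2$ and $\alpha_1<\omega$, and take $U=X_\alpha\setminus\{a_\alpha\}$, which is open in $X$. By the lemma, $F(X_\alpha)=A_\beta\cup\bigsqcup_{s\in Q_{\last\gamma}}X_{\beta s}$, with $\beta=\varrho(\alpha)$. I would pick $s_0\in Q_{1-\last\gamma}$, which is nonempty since $Q_0$ and $Q_1$ are dense. Surjectivity of $\tau_\alpha:[0,1]\to[0,1]$ provides $t_0\in(0,1)$ with $\tau_\alpha(t_0)=s_0$, so $a_{\alpha t_0}\in U$ maps to $a_{\beta s_0}$ and therefore $a_{\beta s_0}\in F(U)$. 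On the other hand, because $s_0\notin Q_{\last\gamma}$ and the subcontinua $X_{\beta s}$ in the formula for $F(X_\alpha)$ are pairwise disjoint and meet $A_\beta$ only at $a_{\beta s}$, it follows that $X_{\beta s_0}\cap F(X_\alpha)=\{a_{\beta s_0}\}$. Consequently every open neighbourhood of $a_{\beta s_0}$ in $X$ contains points of $X_{\beta s_0}\setminus\{a_{\beta s_0}\}$ lying outside $F(U)$, so $a_{\beta s_0}$ is not interior to $F(U)$, and $F(U)$ is not open.

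The main obstacle is the preimage bookkeeping in both halves of the argument: verifying that $a_\alpha$ is the sole preimage of $a_\beta$ inside $X_\alpha$, and locating an explicit preimage of $a_{\beta s_0}$ in $U$. Both reduce to combining injectivity of $\tau_\alpha$ at the endpoints of $[0,1]$ with the pairwise disjointness of the branches $X_{\alpha r}$ and of their images $X_{\beta\tau_\alpha(r)}$, but keeping the bookkeeping straight requires a bit of care.
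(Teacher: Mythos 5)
Your proposal is correct and follows the same route as the paper, which simply observes that both claims are immediate consequences of Lemma~\ref{L:F-props-image-of-Xalpha-equalities}; you have filled in the details. One small remark: in the almost-openness part, the step where you check that $a_\alpha$ is the only preimage of $a_\beta$ in $X_\alpha$ is not actually needed, since for any sets one has $F(X_\alpha\setminus\{a_\alpha\})\supseteq F(X_\alpha)\setminus\{F(a_\alpha)\}$, and removing the single point $a_\beta$ from the open set you exhibit inside $F(X_\alpha)$ already yields a nonempty open subset of $F(X_\alpha\setminus\{a_\alpha\})$.
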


\subsection{Entropy of $F$}\label{SS:F-entropy}
\begin{proposition}\label{P:F-entropy}
The constructed map $F$ has positive entropy.
\end{proposition}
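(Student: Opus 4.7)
The plan is to exhibit a topological horseshoe for some iterate $F^N$: disjoint compact sets $Y_0, Y_1\subseteq X$ with $F^N(Y_i)\supseteq Y_0\cup Y_1$ for $i\in\{0,1\}$. The standard $(n,\eps)$-separated point count, with $\eps<\tfrac{1}{3}d(Y_0,Y_1)$, then gives $h(F^N)\ge\log 2$ and hence $h(F)\ge(\log 2)/N>0$. I take $Y_0=X_{z_m}$ and $Y_1=X_{z_{m+2}}$ for some large $m$, where $(z_k)_{k\in\ZZZ}$ is the sequence from Lemma~\ref{L:tau-0} used in defining $F$; these branch off $A_\emptyword$ at distinct points $a_{z_m}\neq a_{z_{m+2}}$ and are therefore disjoint.

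The first (easy) covering $F^2(Y_0)\supseteq Y_0\cup Y_1$ follows from two applications of Lemma~\ref{L:F-props-image-of-Xalpha-equalities}. For any single-letter $r\in Q_c$, that lemma gives $F(X_r)\supseteq\bigsqcup\{X_s:\ s\in Q_{c+1}\cap(\varphi_\emptyword(r),\varrho(r)]\}$. Applied with $r=z_m$ this yields $F(X_{z_m})\supseteq X_{z_{m+1}}$; applied with $r=z_{m+1}$ it then yields $F(X_{z_{m+1}})\supseteq X_s$ for every $s\in Q_{m\bmod 2}\cap(\varphi_\emptyword(z_{m+1}),z_{m+2}]$. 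The basic inequality $\varphi_\emptyword(z_{m+1})<z_m$ built into the choice of $(z_k)$ places both $z_m$ and $z_{m+2}$ in this set. The same reasoning gives $F^2(X_{z_{m+2}})\supseteq X_{z_{m+2}}$.

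The hard covering is $F^{N'}(Y_1)\supseteq X_{z_m}$ for some even $N'$. Here I invoke Lemma~\ref{L:tau-0}(g) with $r=z_{m+2}$ and $s=z_m$: it produces $h,k\in\NNN_0$ and an element $t\in Q$ lying in $(\varphi_\emptyword(z_{m+2+k}),z_{m+3+k})$ with $\xi^h(t)=z_m$. Moreover, inspection of the proof of~(g) shows that $k$ can be chosen of either parity (through the choice of the large integer denoted $m$ in that proof), and the parity class of $t$ is likewise adjustable. Arranging matters so that $t\in Q_{(m+k+1)\bmod 2}$, Lemma~\ref{L:F-props-image-of-Xalpha-equalities} delivers $F(X_{z_{m+2+k}})\supseteq X_t$. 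Using the special case $F(X_u)\supseteq X_{\xi(u)}$, valid for every $u\in Q$ with $\abs{u}=1$, the chain
\[
F^{k+1+h}(X_{z_{m+2}}) \supseteq F^{1+h}(X_{z_{m+2+k}}) \supseteq F^h(X_t) \supseteq X_{\xi^h(t)} = X_{z_m}
\]
holds, and the parity constraints force $N':=k+1+h$ to be even.

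Setting $N=N'$, iteration of the easy coverings yields $F^N(Y_0)\supseteq Y_0\cup Y_1$ and $F^N(Y_1)\supseteq Y_1$, and together with the hard covering one also has $F^N(Y_1)\supseteq Y_0$. This is the desired horseshoe for $F^N$, and positive entropy follows as stated. The main obstacle is the parity bookkeeping in the hard step: Lemma~\ref{L:F-props-image-of-Xalpha-equalities} restricts $F(X_r)$ to $X_s$'s of one fixed parity class, so making Lemma~\ref{L:tau-0}(g) deliver a $t$ whose parity matches that requirement forces one to trace carefully through the proof of~(g), but this is ultimately a routine exercise.
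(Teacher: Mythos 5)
Your proposal reaches the right conclusion and even picks the paper's exact pair of sets $Y_0=X_{z_m}$, $Y_1=X_{z_{m+2}}$, but the ``hard covering'' via Lemma~\ref{L:tau-0}(g) is an unnecessary detour: the easy argument you already used for $Y_0$ applies verbatim to $Y_1$ if you let yourself step \emph{backward} along the sequence $(z_k)$. You correctly observed that Lemma~\ref{L:F-props-image-of-Xalpha-equalities} places both $z_m$ and $z_{m+2}$ (the two neighbors of $z_{m+1}$ in the chain) in the set covered by $F(X_{z_{m+1}})$, because $\varphi_\emptyword(z_{m+1})<z_m$. But the very same lemma, applied to $r=z_{m+2}$, gives $F(X_{z_{m+2}})\supseteq X_s$ for every $s\in Q_{(m+1)\bmod 2}\cap(\varphi_\emptyword(z_{m+2}),z_{m+3}]$, and the inequality $\varphi_\emptyword(z_{m+2})<z_{m+1}<z_{m+3}$ from (\ref{EQ:tau-0-rm-choice}) places $z_{m+1}$ in this set, so $F(X_{z_{m+2}})\supseteq X_{z_{m+1}}$. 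Combined with your observation, this gives $F^2(X_{z_{m+2}})\supseteq F(X_{z_{m+1}})\supseteq X_{z_m}\cup X_{z_{m+2}}$ — the hard covering with $N'=2$. The clean way to see this, and the way the paper phrases it, is the single symmetric inclusion $F(X_{z_k})\supseteq X_{z_{k-1}}\cup X_{z_{k+1}}$ valid for every $k\in\ZZZ$, from which $F^2(X_{z_m})$ and $F^2(X_{z_{m+2}})$ both contain $X_{z_m}\cup X_{z_{m+2}}$ at once. This yields a horseshoe for $F^2$ and the explicit bound $h(F)\ge\tfrac12\log 2$, whereas your version passes through the technical backward-trajectory machinery of Lemma~\ref{L:tau-0}(g) (including a hand-wavy ``inspection of the proof'' claim about parity control that is not actually needed, since the parity of $N'$ is forced automatically by the parity cycling of $\xi$), and only gives $h(F)\ge(\log 2)/N$ with an uncontrolled $N$. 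Your proof is correct, but you made the problem much harder than it is by missing the backward step at the start of the chain.
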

\begin{proof}
Fix any $m\in\ZZZ$ and let $c\in\Sigma$ be such that $z_m\in Q_c$.
By Lemma~\ref{L:tau-0}(d),
$\varphi_\emptyword(z_m)<z_{m-1}<z_{m+1}=\tau_\emptyword(z_m)$.
Thus
\begin{equation*}
   F(X_{z_m})
   \quad\supseteq\quad
   \bigcup_{s\in Q_{c+1}\cap(\varphi_\emptyword(z_m),\tau_\emptyword(z_m)]}  X_s
   \quad\supseteq\quad
   \left(X_{z_{m-1}}\cup X_{z_{m+1}}\right)
\end{equation*}
by Lemma~\ref{L:F-props-image-of-Xalpha-equalities}.
Since this is true for every $m$, we easily get
\begin{equation*}
    \left(F^2(X_{z_m}) \cap F^2(X_{z_{m+2}}) \right)
    \supseteq
    \left( X_{z_{m}}\cup X_{z_{m+2}} \right).
\end{equation*}
Hence, the (disjoint) sets $X_{z_m}, X_{z_{m+2}}$ form a horseshoe for $F^2$
and $h(F)\ge (1/2)\log 2$.
\end{proof}

\subsection{Invariant measures of $F$}\label{SS:F-invmeas}

\begin{proposition}\label{P:F-invmeas} 
Let $\mu$ be an $F$-invariant Borel probability measure. Then $\mu(X^{(m)}\setminus\{a_\emptyword,b_\emptyword\})=0$ for every $m$.
Moreover, if $\mu(\{a_\emptyword,b_\emptyword\})=0$, then
$\mu$ is non-atomic and
the dynamical system $(X,\BBb_X,\mu,F)$ is a measure-theoretic extension of the dyadic adding machine.
\end{proposition}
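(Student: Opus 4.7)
The plan is to address the three assertions in turn. The key tools are Poincaré recurrence combined with Lemma~\ref{L:F-props-arc-in-Xm} (which confines $\omega$-limits of points in $\bigcup_m X^{(m)}$ to $\{a_\emptyword,b_\emptyword\}$) for the measure-zero claim; $F$-invariance together with Proposition~\ref{P:F-period} for non-atomicity; and the refining sequence $(\DDd_m)_m$ of regular periodic decompositions from Proposition~\ref{P:F-RPD} for constructing the factor map onto the dyadic adding machine.

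For the first assertion, fix $m\ge 0$. Since $X^{(m)}$ is $F$-invariant (Proposition~\ref{P:F-cont-surj}), the normalized restriction of $\mu$ to $X^{(m)}$ is $F$-invariant, so Poincaré recurrence yields that $\mu$-almost every point of $X^{(m)}$ is recurrent, i.e.\ satisfies $x\in\omega_F(x)$. By Lemma~\ref{L:F-props-arc-in-Xm}, the only such points in $X^{(m)}$ are $a_\emptyword$ and $b_\emptyword$, and the claim follows. For non-atomicity under the hypothesis $\mu(\{a_\emptyword,b_\emptyword\})=0$: the first assertion yields $\mu(X^{(\infty)})=1$, so any atom of $\mu$ must lie in $X^{(\infty)}$; by Proposition~\ref{P:F-period} such a point is not periodic, hence its forward orbit is infinite, and $F$-invariance of $\mu$ would force $\mu(X)=\infty$, a contradiction.

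For the extension, observe that Lemma~\ref{L:def-RPD}(a),(e) implies that each $x\in X^{(\infty)}$ and each $m\ge 1$ determine a unique cell $D_{\gamma^m(x)}\in\DDd_m$ with $x\in\interior{D_{\gamma^m(x)}}$, and the refinement property (Lemma~\ref{L:def-RPD}(d)) forces $\gamma^m(x)$ to be the length-$m$ prefix of $\gamma^{m+1}(x)$. Therefore
\[
  \Pi:X^{(\infty)}\to\Sigma^\infty, \qquad \Pi(x)=\lim_{m\to\infty}\gamma^m(x),
\]
is a well-defined Borel map. Identifying $\Sigma^\infty$ with the group $\mathbb{Z}_2$ of dyadic integers, the relation $F(D_\gamma)\subseteq D_{\gamma+1}$ (Proposition~\ref{P:F-RPD}) translates to $\Pi\circ F=T\circ\Pi$ on $X^{(\infty)}$, where $T$ denotes translation by $1$. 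Hence $\Pi_{*}\mu$ is a $T$-invariant Borel probability on $\mathbb{Z}_2$; since the dyadic adding machine is uniquely ergodic with Haar measure as its unique invariant probability, $\Pi_{*}\mu$ is Haar, and $\Pi$ realizes $(X,\BBb_X,\mu,F)$ as a measure-theoretic extension of the adding machine.

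No substantial obstacle is anticipated: all the dynamical content has already been collected in Propositions~\ref{P:F-cont-surj}, \ref{P:F-RPD}, \ref{P:F-period} and Lemmas~\ref{L:def-RPD}, \ref{L:F-props-arc-in-Xm}. The main subtlety is merely verifying that the set $\bigcup_m X^{(m)}$, off which $\Pi$ is undefined, is $\mu$-null, but this is exactly what the first assertion provides once $\mu(\{a_\emptyword,b_\emptyword\})=0$ is assumed.
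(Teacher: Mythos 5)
Your proof is correct, but it takes a different route from the paper's for the first assertion and for non-atomicity. To prove $\mu(X^{(m)}\setminus\{a_\emptyword,b_\emptyword\})=0$, the paper argues directly and elementarily: since $\varrho$ has no periodic cycles in $Q^+$ (Lemma~\ref{L:F-props-varrho}), the preimages $F^{-k}(A_\alpha)$, $k\ge 0$, are pairwise disjoint for each $\alpha\in Q^+$, forcing $\mu(A_\alpha)=0$ by finiteness of $\mu$; the remaining arc $A_\emptyword$ is treated separately by pushing $\mu|_{A_\emptyword}$ forward to a $\varphi_\emptyword$-invariant measure on $[0,1]$, whose support must lie in $\{0,1\}$. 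Your Poincar\'e-recurrence argument, combined with Lemma~\ref{L:F-props-arc-in-Xm}, handles $A_\emptyword$ and the rest in one stroke and is the more conceptual route; both arguments ultimately rest on the eventual-decrease of $\abs{\varrho^n(\alpha)}$. For non-atomicity, the paper obtains it as a byproduct of the factor map: $\pi_{\#}\mu$ is the (non-atomic) Haar measure by unique ergodicity, so the fibers $\pi^{-1}(\gamma)$, and hence all singletons in $X^{(\infty)}$, have $\mu$-measure zero. You instead give a self-contained orbit argument via Proposition~\ref{P:F-period}; this is fine, but you should spell out that atom masses are non-decreasing along forward orbits (because $\{x\}\subseteq F^{-1}(\{F(x)\})$ gives $\mu(\{F(x)\})\ge\mu(\{x\})$), since this is what turns an infinite orbit of an atom into $\mu(X)=\infty$. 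Your construction of $\Pi$ via the nested cells $D_{\gamma^m(x)}$ coincides with the paper's direct definition $\pi(b_\alpha)=\gamma$ for $\alpha\in Q_\gamma$, and both rely on unique ergodicity of the dyadic adding machine to identify $\Pi_{*}\mu$ with Haar measure.
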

\begin{proof}
Fix any $\alpha\in Q^+$.
Since $\varrho^{-1}(Q^+)\subseteq Q^+$, $\varrho$ has no periodic cycles in $Q^+$
(Lemma~\ref{L:F-props-varrho}) and
$F^{-k}(A_\alpha)\subseteq \bigcup_{\beta\in\varrho^{-k}(\alpha)} A_\beta$ for every $k\ge 0$ by the construction of $F$,
the preimages $F^{-k}(A_\alpha)$ are pairwise disjoint. Thus $\mu(A_\alpha)=0$ by invariance and finiteness of $\mu$
and so $\mu(X^{(m)})=\mu(A_{\emptyword})$ for every $m$.

Assume that $c=\mu(A_{\emptyword})>0$. Fix $0<t<u<1$ and put $t'=\varphi_\emptyword^{-1}(t)$, $u'=\varphi_\emptyword^{-1}(u)$.
Since $(a_{t'},a_{s'})\subseteq F^{-1}((a_t,a_u)) \subseteq (a_{t'},a_{s'}) \cup (X^{(1)}\setminus A_\emptyword)$
and $\mu(X^{(1)}\setminus A_\emptyword)=0$,
the measure $\nu$ on $[0,1]$, defined by $\nu(B)=(1/c)\, \mu(\{a_t: t\in B\})$ for Borel $B\subseteq [0,1]$,
is $\varphi_\emptyword$-invariant. Since $\varphi_\emptyword(t)<t$ on $(0,1)$, the support of $\nu$
is contained in $\Omega(\varphi_\emptyword)=\{a_\emptyword,b_\emptyword\}$. Thus $\mu(A_\emptyword)=\mu(\{a_\emptyword,b_\emptyword\})$
and the first assertion is proved.

Assume now that $\mu(\{a_\emptyword,b_\emptyword\})=0$. Then $\mu(X^{(\infty)})=1$; note that
$X^{(\infty)}$ is $F$-invariant.
Define $\pi:X^{(\infty)}\to\Sigma^\infty$ by $b_\alpha\mapsto \gamma$, where $\gamma\in\Sigma^\infty$ is such that
$\alpha\in Q_\gamma$. Then $\pi$ is a Borel measurable surjection
(in fact, for every $\gamma\in\Sigma^*$ the preimage of the $\gamma$-cylinder
is $D_\gamma\cap X^{(\infty)}$).
Let $G$ denote the dyadic adding machine on $\Sigma^\infty$
and $\nu$ denote the unique $G$-invariant probability on $\Sigma^\infty$.
From Proposition~\ref{P:F-RPD} and the fact that $\mu(X^{(\infty)})=1$
it follows that $\mu$ is non-atomic, $\pi\circ F=G\circ\pi$ and $\nu=\pi_{\#}\mu$.
Thus $\pi:(X,\BBb_X,\mu,F)\to (\Sigma^\infty,\BBb_{\Sigma^\infty},\nu,G)$ is a factor map.
\end{proof}

\section{Proof of the main theorem}\label{S:proof}
In this section we prove that factor maps $G_m$ ($m\ge 2$) of $F$ obtained
by collapsing $X^{(m)}$ to a point satisfy all the properties stated in Theorem~\ref{T:main}.
We first construct a special convex metric on $X$, see Section~\ref{SS:G-convex-metric}.
Then, in Section~\ref{SS:G-lipschitz} we show that the factor maps $G_m$
are Lipschitz and $\lim_m \Lip(G_m)=1$. Finally, in Section~\ref{SS:G-proof}
we give a proof of Theorem~\ref{T:main}.

\subsection{A convex metric on $X$}\label{SS:G-convex-metric}

\begin{lemma}\label{L:rho-finite-to-one}
The map $\varrho|_{Q^*}:Q^*\to Q^*$ is finite-to-one; that is,
for every $\beta\in Q^*$ there are at most finitely many $\alpha\in Q^*$ with $\varrho(\alpha)=\beta$.
\end{lemma}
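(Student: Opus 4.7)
The plan is to prove $|\varrho^{-1}(\beta)|<\infty$ by induction on $|\beta|$. The base case $|\beta|=0$ is trivial. For the induction, I first invoke Lemma~\ref{L:F-props-varrho}(a,b) to note that every $\alpha\in\varrho^{-1}(\beta)$ has $|\alpha|\in\{|\beta|,|\beta|+1\}$ and that each $\beta_i$ is recovered from $\alpha$ by applying the appropriate index map. This splits the preimage set into Case~I ($|\alpha|=|\beta|$, with $\alpha_1>\omega$ when $|\alpha|\ge 2$) and Case~II ($|\alpha|=|\beta|+1$, with $\alpha_1<\omega$).

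In Case~I (assuming $|\beta|\ge 2$), I would use $\beta_0=\tau_\emptyword(\alpha_0)$ combined with Lemma~\ref{L:tau-0}(a) to get at most two choices of $\alpha_0$, then $\beta_1=\tau''_{\alpha_0}(\alpha_1)$ combined with Lemma~\ref{L:tau-12}(d) to get finitely many $\alpha_1$. For $i\ge 2$, the relation $\beta_i=\tau_{\alpha_0\dots\alpha_{i-1}}(\alpha_i)$ together with the fact that $\alpha_1>\omega$ forces each $\tau_{\alpha_0\dots\alpha_{i-1}}$ to be a homeomorphism (the defining sets $R'_c=Q_{\last{\gamma c}}$ being pairwise disjoint by (\ref{EQ:e-gamma}), so Lemma~\ref{L:tau-12}(a) applies), uniquely determining $\alpha_i$. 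Hence Case~I contributes finitely many preimages.

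In Case~II, the recursive relation $\beta_i=\tau_{\alpha_0\dots\alpha_i}(\alpha_{i+1})$ for $i\ge 1$ involves maps that are only finite-to-one surjections (since $\alpha_1<\omega$ forces $R'_0=R'_1$, ruling out the homeomorphism case), so Lemma~\ref{L:tau-12}(d) gives finitely many $\alpha_{i+1}$ per $(\alpha_0,\dots,\alpha_i)$. Thus once the pair $(\alpha_0,\alpha_1)$ is fixed, only finitely many continuations survive. The whole problem therefore reduces to bounding the set of pairs $(\alpha_0,\alpha_1)\in Q\times(Q\cap(0,\omega))$ with $\tau'_{\alpha_0}(\alpha_1)=\beta_0$. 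By Lemma~\ref{L:tau-12}(d) applied to $\tau'_{\alpha_0}$, each fixed $\alpha_0$ admits only finitely many such $\alpha_1$, so it suffices to prove finiteness of
\[
 T\;=\;\bigl\{\,r\in Q:\ \beta_0\in(\varphi_\emptyword(r),\tau_\emptyword(r))\,\bigr\}.
\]

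For this last finiteness claim I would exploit Lemma~\ref{L:tau-0}(c), which says the lengths $\tau_\emptyword(r)-\varphi_\emptyword(r)$ form a null family, together with the fine structure from the proof of Lemma~\ref{L:tau-0}. I would decompose $T$ into the $z_m$-type candidates, which are finite by the monotone behaviour $z_m\to 0,1$ in Lemma~\ref{L:tau-0}(d), and the chain-type candidates $s_{k_{j,i}}$ coming from property $(D_j)$. For the latter, within a single chain $j$ a uniform positive lower bound on $t-\varphi_\emptyword(t)$ over the compact subinterval containing the candidates (combined with $\eps_j<$ half that bound) limits the number of candidate indices $i$ per chain, while the relationship $s_{k_{j,l_j}}<z_{m_j}\to 0$ from $(A_j)$ ensures that all but finitely many chains $j$ have their whole relevant tail below $\beta_0$, and so contribute no candidate at all.

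The hard part will be making this last ``only finitely many chains contribute'' step rigorous: the shrinking-interval estimate of Lemma~\ref{L:tau-0}(c) alone only confines the candidates to a bounded subinterval of $(0,1)$, and it seems genuinely necessary to couple it with the chain-descent structure encoded by $(A_j)$--$(E_j)$ and the alternating $Q_{c_j+i}$-parity information from $(B_j)$ to exclude the infinite family of chains with $z_{m_j}<\beta_0$; the other portions of the argument are essentially bookkeeping once the key inductive recursion and the uses of Lemma~\ref{L:tau-12}(d) are set up.
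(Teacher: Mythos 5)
Your proof takes a different structural route from the paper's, and---as you yourself note---it is incomplete at the $T$-finiteness step.

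The paper also inducts on $\abs{\beta}$, but with a cleaner reduction that you forgo. For $\abs{\beta}\ge 2$, writing $\beta=\tilde\beta s$ and $\alpha=\alpha'r$ (with $r,s\in Q$), the relations in (\ref{EQ:rho-props}) give $\varrho(\alpha')=\tilde\beta$ and $s=\tau_{\alpha'}(r)$. Thus $\alpha'$ ranges over $\varrho^{-1}(\tilde\beta)$, which is finite by the induction hypothesis applied to the strictly shorter $\tilde\beta$, and for each $\alpha'$ the last letter $r$ ranges over a finite set by Lemma~\ref{L:tau-12}(d). This settles the entire inductive step at one stroke, with no split into your Cases~I and~II and no need to control $T$; the only place the paper would need anything like $T$ is in the base case $\abs{\beta}=1$. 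Your write-up announces an induction on $\abs{\beta}$ but never actually invokes the induction hypothesis; by reconstructing $\alpha$ coordinate by coordinate you end up facing the $T$-finiteness question at every value of $\abs{\beta}$, whereas the recursive reduction localises it to $\abs{\beta}=1$.

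Your Case~I reasoning is correct: $\tau_{\alpha_0\dots\alpha_{i-1}}$ is indeed a homeomorphism when $\alpha_1>\omega$, by (\ref{EQ:e-gamma}) and Lemma~\ref{L:tau-12}(a) (the same applies to $\tau''_{\alpha_0}$, so $\alpha_1$ is in fact unique, not merely finite). And in Case~II your reduction to the finiteness of $T=\{r\in Q:\ \beta_0\in(\varphi_\emptyword(r),\tau_\emptyword(r))\}$ is the genuine sticking point: every preimage of length $\abs{\beta}+1$ has $\beta_0=\tau'_{\alpha_0}(\alpha_1)$, hence $\beta_0\in(\varphi_\emptyword(\alpha_0),\tau_\emptyword(\alpha_0))$ by Lemma~\ref{L:tau-alpha-props}(c1), and, as you observe, the null property from Lemma~\ref{L:tau-0}(c) only confines $\alpha_0$ to a shrinking neighbourhood of $\varphi_\emptyword^{-1}(\beta_0)$, not to a finite set. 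Be aware that the paper's base case asserts $\varrho^{-1}(\beta)=\tau_\emptyword^{-1}(\beta)$ for $\beta\in Q$ without discussing the length-$2$ preimages $\alpha_0\alpha_1$ with $\alpha_1<\omega$, for which $\varrho(\alpha_0\alpha_1)=\tau'_{\alpha_0}(\alpha_1)\in Q$ by (\ref{EQ:rho-props}); so the difficulty you flag is genuine and the paper's written proof does not spell out how it is handled. As submitted, your proposal stops short of a complete proof precisely at this step.
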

\begin{proof}
If $\beta=\emptyword$ then $\varrho^{-1}(\beta)=\{\emptyword\}$, and
if $\beta\in Q$ then $\varrho^{-1}(\beta)=\tau_\emptyword^{-1}(\beta)$ is either singleton or a two-point set, see
Lemma~\ref{L:tau-0}(a). Assume now that $m\ge 2$ and that $\varrho^{-1}(\beta)$ is finite whenever $\abs{\beta}<m$.
Take any $\beta=\beta' s\in Q^m$ and any $\alpha=\alpha' r\in\varrho^{-1}(\beta)$, where $r,s\in Q$.
Then $\varrho(\alpha')=\beta'$ by (\ref{EQ:rho-props}) and so,
by the hypothesis, $\alpha'$ belongs to the finite set $\varrho^{-1}(\beta')$.
Further, $s=\tau_{\alpha'}(r)$ again by  (\ref{EQ:rho-props});
since every point from $Q$ has at most finitely many $\tau_{\alpha'}$-preimages by Lemma~\ref{L:tau-12}(d),
for any given $\alpha'$ the index $r$ must belong to a finite set $\tau_{\alpha'}^{-1}(s)$.
Thus $\varrho^{-1}(\beta)$ is finite and the proof is finished.
\end{proof}

\begin{lemma}\label{L:F-entropy-metric}
There is a convex metric $d$ on $X$ which is compatible with the topology of $X$ and is such that
\begin{equation}\label{EQ:F-entropy-metric-Lip-Falpha}
    \Lip(F|_{X_\alpha})\le L_m^2
    \qquad\text{for every } \alpha\in Q^*, \abs{\alpha}\ge 2.
\end{equation}
\end{lemma}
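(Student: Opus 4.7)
The plan is to construct the convex metric $d$ by assigning a positive length $\ell_\alpha$ to each arc $A_\alpha$ ($\alpha\in Q^*$), parameterizing each $A_\alpha$ uniformly by arclength (so that $d(a_{\alpha s},a_{\alpha t})=|t-s|\,\ell_\alpha$ on each $A_\alpha$), and taking the induced shortest-path metric on $X$.

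First I would reduce (\ref{EQ:F-entropy-metric-Lip-Falpha}) to an inequality involving only the arc lengths. For $\abs\beta\ge 2$ the restriction $F|_{A_\beta}\colon A_\beta\to A_{\varrho(\beta)}$ is the index map $\tau_\beta$ with $\Lip(\tau_\beta)\le L_{\abs\beta}$ (Lemma~\ref{L:tau-12}); in the convex metric this becomes $\Lip(F|_{A_\beta})\le L_{\abs\beta}\,\ell_{\varrho(\beta)}/\ell_\beta$. Any geodesic in $X_\alpha$ (for $\abs\alpha=m\ge 2$) decomposes into subarcs of arcs $A_\beta$ with $\beta$ extending $\alpha$, so the desired bound $\Lip(F|_{X_\alpha})\le L_m^2$ follows once
\[
 \ell_{\varrho(\beta)}\le L_{\abs\beta}\,\ell_\beta \qquad\text{for every }\beta\in Q^+\text{ with }\abs\beta\ge 2;
\]
indeed then $L_{\abs\beta}\,\ell_{\varrho(\beta)}/\ell_\beta\le L_{\abs\beta}^2\le L_m^2$ on each piece.

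Next I would define the lengths using the orbit structure of $\varrho$. By Lemma~\ref{L:F-props-varrho}(d), for each $\beta\in Q^+$ the integer $N(\beta)=\min\{n\ge 0:\abs{\varrho^n(\beta)}=1\}$ is finite; set $r^*(\beta):=\varrho^{N(\beta)}(\beta)\in Q$. Fixing a positive summable family $\{c_r\}_{r\in Q}$ (to be specified below), put
\[
 \ell_\beta = c_{r^*(\beta)}\prod_{i=0}^{N(\beta)-1} L_{\abs{\varrho^i(\beta)}}^{-1},
\]
with the convention $\ell_r=c_r$ for $r\in Q$. Directly $\ell_{\varrho(\beta)}=L_{\abs\beta}\,\ell_\beta$ for $\abs\beta\ge 2$, so the Lipschitz inequality above holds with equality.

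Finally I would verify that $d$ is compatible with the topology of $X$, i.e.\ that $\diam_d X_\beta\to 0$ as $\beta$ grows deeper. I would bound $\diam_d X_\beta$ by the total $\ell$-length of $X_\beta$, partition the extensions $\gamma$ of $\beta$ according to their orbit terminus $r^*(\gamma)$, and use Lemma~\ref{L:rho-finite-to-one} to control the cardinalities of the preimage sets $\varrho^{-n}(r)$. The summability hypothesis $\sum_m\prod_{i\le m}L_i^{-1}<\infty$ from (\ref{EQ:Lm-def}) is used to dominate the resulting double sum, and a sufficiently fast-decaying choice of $\{c_r\}$ then gives the required decay of the diameters. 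The main obstacle is precisely this last step: although the length inequality is guaranteed by construction, the values $\ell_\beta$ inflate along preimage chains of $\varrho$, so establishing topological compatibility forces a delicate interplay between the finite-to-one property of $\varrho$ and the summability hypothesis (\ref{EQ:Lm-def}).
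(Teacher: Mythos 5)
Your construction of the arc lengths and your Lipschitz argument match the paper's. Your $\ell_\beta$ is exactly the paper's $\lambda_\alpha$ written in closed form rather than recursively; both encode the relation $\ell_\beta/\ell_{\varrho(\beta)}=L_{\abs\beta}^{-1}$ for $\abs\beta\ge 2$, and the reduction of $\Lip(F|_{X_\alpha})\le L_m^2$ to the per-arc estimate via convexity and the monotonicity of $(L_m)$ is the same.

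The gap is in the compatibility step, and it is a real one. The criterion you aim for, ``$\diam_d X_\beta\to 0$ as $\beta$ grows deeper,'' is not sufficient. It covers convergence to end points in $X^{(\infty)}$, where one has a nested sequence of $X_\beta$'s shrinking to the limit, but it says nothing about convergence to a cut point $x=a_{\alpha t}$ with $\alpha\in Q^m$ and $t\in(0,1)$: a sequence $x_n\to x$ may live in side-branches $X_{\alpha r_n}$ with $r_n\to t$ in $Q$, and none of these branches contains $x$. For $d(x_n,x)\to 0$ one needs the family $\{\diam_d X_{\alpha r}\}_{r\in Q}$, or in the end the whole family $\{\lambda_\alpha\}_{\alpha\in Q^*}$, to be \emph{null} (only finitely many terms exceed any $\eps>0$). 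The paper establishes exactly this (the claim labelled (\ref{EQ-lambdas-are-null})) by an induction on $m$ in which Lemma~\ref{L:rho-finite-to-one} is applied to the \emph{finite} sets $R_m=\{\alpha\in Q^m:\lambda_\alpha>\eps\}$, using that $p_\alpha$ is bounded on $R_m$; nullity plus the tail estimate $\lambda_\gamma\le\prod_{2\le j\le\abs\gamma}L_j^{-1}$ and (\ref{EQ:Lm-def}) then give sequential continuity of $d$ directly from the formula (\ref{EQ:F-entropy-metric-def}).

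Your proposed alternative route -- dominating $\diam_d X_\beta$ by the total $\ell$-length $\sum_{\gamma\in[\beta]}\ell_\gamma$ and controlling it via the sets $\varrho^{-n}(r)$ -- is both more than is needed and not clearly feasible: the fibers $\{\gamma\in Q^n:r^*(\gamma)=r\}$ are unions over $N$ of $\varrho^{-N}(r)$, and Lemma~\ref{L:rho-finite-to-one} gives no uniform bound on $\abs{\varrho^{-N}(r)}$, so the ``double sum'' has no evident reason to converge and no choice of $\{c_r\}$ obviously repairs this. (Incidentally, summability of $\{c_r\}$ is stronger than required; the paper only asks that $\{\lambda_r\}_{r\in Q}$ be a null family with values in $(0,1)$.) To close the proof you should drop the total-length bound and instead prove nullity of $\{\lambda_\alpha\}$ by the finite-to-one induction, then check sequential continuity of $d$ at both end points and cut points.
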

\begin{proof}
For convenience put $a_\alpha=b_\alpha$ for $\alpha\in Q^\infty$
and $a_\alpha = a_{\bar\alpha}$
for $\alpha=\bar{\alpha} 0^\infty$ with $\bar\alpha\in Q^*\times[0,1]$.
Thus, for every $x\in X$ we have unique $\alpha\in \tilde{Q}=(Q^*\times[0,1]\times\{0\}^\infty)\cup Q^\infty$
such that $x=a_\alpha$.

First we inductively define `lengths' $\lambda_\alpha$ of the arcs $A_\alpha$ ($\alpha\in Q^*$),
using which the convex metric $d$ will be defined.
Put $\lambda_\emptyword=1$ and for $r\in Q$ define $\lambda_r$ arbitrarily in such a way
that $0<\lambda_r<1$ for every $r$ and $\{\lambda_r:r\in Q\}$ is null
(that is, for every $\eps>0$
there are only finitely many $r$'s with $\lambda_r>\eps$).
Assume that $m\ge 2$ and that $\lambda_\alpha$ has been defined for every $\abs\alpha<m$; take any
$\alpha\in Q^m$. Let $p_\alpha\ge 1$ denote the minimal integer $p$ such that $\abs{\varrho^p(\alpha)}<m$.
We define
\begin{equation}\label{EQ:F-entropy-metric-lengths-def}
    \lambda_\alpha = \frac{\lambda_\beta}{L_m^{p_\alpha}}
    \qquad
    \text{where } \beta = \varrho^{p_\alpha}(\alpha);
\end{equation}
note that, since $\abs\beta<m$, $\lambda_\beta$ has already been defined.
After finishing the induction we obtain $\lambda_\alpha$ for every $\alpha\in Q^*$ in such a way that
\begin{equation}\label{EQ:F-entropy-metric-lengths}
    0<\lambda_\alpha\le \prod\limits_{2\le k\le m} L_m^{-1}
    \qquad\text{and}\qquad
    \frac{\lambda_\alpha}{\lambda_{\varrho(\alpha)}} = L_m^{-1}
    \qquad\text{for every }   \alpha\in Q^m, m\ge 2.
\end{equation}
Moreover, we claim that
\begin{equation}\label{EQ-lambdas-are-null}
\{\lambda_\alpha:\alpha\in Q^*\} \quad\text{is null}.
\end{equation}
To prove this it suffices to show that, for any fixed $\eps>0$,
the union of the sets $R_m=\{\alpha\in Q^m: \lambda_\alpha>\eps\}$
($m\ge 1$) is finite.
By (\ref{EQ:F-entropy-metric-lengths}) and (\ref{EQ:Lm-def}) we have that there is $m_0\ge 2$ such that
$R_m=\emptyset$ for every $m>m_0$. Thus we need only to prove that every $R_m$ is finite.
For $m=1$ we have this immediately by the choice of $\lambda_r$ for $r\in Q$.
Assume now that $m>1$ and that $R_{m-1}$ is finite.
Since $L_m>1$, (\ref{EQ:F-entropy-metric-lengths-def}) implies the existence of
$p_m$ such that $p_\alpha\le p_m$ for every $\alpha\in R_m$. Further,
$\varrho^{p_\alpha}(\alpha)\in R_{m-1}$ for every such $\alpha$. By finiteness of $R_{m-1}$ and the fact
that $\varrho$ (and hence all its iterates) is finite-to-one by Lemma~\ref{L:rho-finite-to-one},
we obtain that $R_m$ is finite. Thus, by induction, the system $\{\lambda_\alpha:\alpha\in Q^*\}$ is null.

Now we define the metric $d$. Basically, this is the unique convex metric on $X$ such that
$d(a_{\alpha s},a_{\alpha t})=\abs{t-s} \lambda_\alpha$ for any $\alpha\in Q^*$ and $s,t\in [0,1]$.
But since dendrites are uniquely arcwise connected, we can also easily give an explicit
formula for $d(x,y)$ for any different $x=a_\alpha,y=a_\beta\in X$ ($\alpha,\beta\in \tilde Q$):
\begin{equation}\label{EQ:F-entropy-metric-def}
  d(x,y)
  =\abs{\alpha_k-\beta_k}\lambda_{\alpha_0\dots\alpha_{k-1}}
  +
  \sum_{i> k} \left(
    \alpha_i\lambda_{\alpha_0\dots\alpha_{i-1}}
    + \beta_i\lambda_{\beta_0\dots\beta_{i-1}}
  \right)
\end{equation}
where $k=k_{\alpha,\beta}\ge 0$ is the first index $i$ with $\alpha_i\ne\beta_i$
(note that $\alpha_0\dots\alpha_{k-1}=\emptyword$ for $k=0$).

Trivially $d$ is a well-defined convex metric on $X$. The compatibility of $d$ with the topology of $X$
can be easily seen as follows.
Take points $x=a_\alpha, x_n=a_{\alpha^n}$ ($\alpha,\alpha^n\in \tilde{Q}$, $\alpha^n \ne\alpha$) for $n\ge 0$ in $X$
and put $k_n=k_{\alpha,\alpha^n}$ for every $n$; assume that $(k_n)_n$ converges.
Then $x_n\to x$ in $X$ if and only if either $k_n\to\infty$, or
$k_n$ is eventually equal to a constant $k\ge 0$, $\alpha_i=0$ for every $i>k$ and
$\lim_n \alpha^n_k = \alpha_k$.
(The first case occurs when $x\in X^{(\infty)}$, the second one when $x\in\bigcup_m X^{(m)}$.)
By (\ref{EQ-lambdas-are-null}), (\ref{EQ:F-entropy-metric-lengths}) and (\ref{EQ:Lm-def})
this is equivalent to $\lim_n d(x_n,x)=0$.
(In fact, if $k_n\to\infty$ then
(\ref{EQ:F-entropy-metric-lengths}) and (\ref{EQ:Lm-def}) give $d_n=d(x_n,x)\to 0$.
Assume now that $k_n=k$ for every sufficiently large $n$. If $\alpha_i>0$ for some $i>k$,
or $\alpha_k^n\not\to\alpha_k$, then $d_n\not\to 0$. Otherwise, for any $\eps>0$,
(\ref{EQ:F-entropy-metric-lengths}) and (\ref{EQ:Lm-def}) give the existence
of $m>k$ such that $\sum_{i>m} \lambda_{\alpha^n_0\dots\alpha^n_{i-1}}<\eps$ for every $n$;
further,
$\lambda_{\alpha^n_0\dots\alpha^n_{i-1}}\to 0$ for every $k<i\le m$
by (\ref{EQ-lambdas-are-null}) together with $\alpha_k^n\to\alpha_k$ and $\alpha_k^n\ne\alpha_k$.
Thus $d_n\to 0$.)
Hence $d$ is compatible with the topology of $X$.

\smallskip

Finally we show (\ref{EQ:F-entropy-metric-Lip-Falpha}). First we prove that
\begin{equation}\label{EQ:F-entropy-metric-Lip-FAalpha}
    \Lip(F|_{A_\alpha})\le L_m^2
    \qquad\text{for every } \alpha\in Q^*, \abs{\alpha}\ge 2.
\end{equation}
To this end, fix any $\alpha\in Q^m$ ($m\ge 2$),
put $\beta=\varrho(\alpha)$
and recall that $\tau_\alpha:[0,1]\to [0,1]$ is Lipschitz-$L_m$.
Then, for any $t,s\in [0,1]$, $d(a_{\alpha t}, a_{\alpha s}) = \lambda_\alpha\abs{t-s}$
by (\ref{EQ:F-entropy-metric-def}),
and
\begin{equation*}
    d(F(a_{\alpha t}), F(a_{\alpha s}))
    =d(a_{\beta \tau_\alpha(t)}, a_{\beta \tau_\alpha(s)})
    =\lambda_\beta \abs{\tau_\alpha(t)-\tau_\alpha(s)}
    \le \lambda_\beta L_m \abs{t-s}
\end{equation*}
by (\ref{EQ:def-F-X2}) and (\ref{EQ:F-entropy-metric-def}).
So $\Lip(F|_{A_\alpha})\le L_m \lambda_\beta/\lambda_\alpha= L_m^2$
by (\ref{EQ:F-entropy-metric-lengths}).

Now (\ref{EQ:F-entropy-metric-Lip-Falpha}) follows from (\ref{EQ:F-entropy-metric-Lip-FAalpha})
from the convexity of $d$ and the fact that the sequence $(L_m)_m$ is decreasing.
In fact, take any $\alpha\in Q^*$ with $\abs{\alpha}=m\ge 2$ and any different $x,y\in X_\alpha$;
we want to show that $d(F(x),F(y))\le L_m^2 d(x,y)$.
Since $\bigcup_p X^{(p)}$ is dense in $X$, we may assume that $x,y\in X^{(p)}$ for some $p$.
And since the set $Y=X^{(p)}\cap X_\alpha$ is a dendrite, the arc $[x,y]$ is contained in $Y$
and we can find $x_0,\dots,x_l\in Y$, $\beta^0,\dots,\beta^{l-1}\in Q^*$ such that $x_0=x$, $x_l=y$,
$[x_{i},x_{i+1}]\subseteq A_{\alpha\beta^i}$ and
the intersections $[x_{i},x_{i+1}]\cap [x_{j},x_{j+1}]$
are empty for $j>i+1$ and equal to $\{x_{i+1}\}$ for $j=i+1$.
Then, by convexity of $d$ and unique arcwise connectedness of $X$,
$d(x,y) = \sum_{i<l} d(x_i, x_{i+1})$. Application of (\ref{EQ:F-entropy-metric-Lip-FAalpha})
gives
$$
 d(F(x),F(y))\le \sum_{i<p} d(F(x_i),F(x_{i+1}))
 \le \sum_{i<p} L_{m+\abs{\beta^i}}^2 d(x_i,x_{i+1})\le L_m^2 d(x,y).
$$
Thus, (\ref{EQ:F-entropy-metric-Lip-Falpha})
is proved.
\end{proof}

\subsection{Factor maps $G_m$}\label{SS:G-lipschitz}

Fix $m\ge 0$.
Let $Y_m$ be the factor space obtained from $X$ by collapsing $X^{(m)}$ to a point and let $\pi_m:X\to Y_m$
be the natural projection. (Note that $Y_m$ is a dendrite which can be described as an $\omega$-star of
the universal dendrites of order $3$.) Equip $Y_m$ with the convex metric $d_m$ given by
\begin{equation}\label{EQ:dm-def}
 \begin{split}
    d_m(\pi_m(x),\pi_m(x'))
    &= d(x,x')-d(\rho_m(x),\rho_m(x'))
 \\
    &=
    \begin{cases}
        d(x,\rho_m(x)) + d(x',\rho_m(x'))  &\text{if } \rho_m(x)\ne \rho_m(x'),
    \\
        d(x,x')                     &\text{if } \rho_m(x)= \rho_m(x'),
    \end{cases}
 \end{split}
\end{equation}
where $\rho_m:X\to X^{(m)}$ is the first point retraction.
We omit the easy proof of the fact that $d_m$ is a well-defined convex metric compatible with the topology of $Y_m$.
Since $X^{(m)}$ is invariant for $F$, we may define the factor map
\begin{equation}\label{EQ:Gm-def}
    G_m:Y_m\to Y_m,\qquad
    G_m(\pi_m(x)) = \pi_m(F(x)).
\end{equation}

\begin{lemma}\label{L:F-entropy-factor-lipschitz}
Let $m\ge 2$ and let $G_m:Y_m\to Y_m$ be defined by (\ref{EQ:Gm-def}).
Then $\Lip(G_m)\le L_m^2$.
\end{lemma}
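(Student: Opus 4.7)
The plan is to reduce $\Lip(G_m)\le L_m^2$ to the estimate $\Lip(F|_{X_\alpha})\le L_{\abs{\alpha}}^2$ from Lemma~\ref{L:F-entropy-metric}, applied to the subdendrites $X_\alpha$ with $\alpha\in Q^{m+1}$. These $X_\alpha$ are exactly the closures in $X$ of the connected components of $X\setminus X^{(m)}$, each meeting $X^{(m)}$ only at the branch point $a_\alpha$. Since $m\ge 2$, every such $\alpha$ satisfies $\abs{\alpha}=m+1\ge 3\ge 2$, so Lemma~\ref{L:F-entropy-metric} gives $\Lip(F|_{X_\alpha})\le L_{m+1}^2\le L_m^2$ by monotonicity of $(L_k)$.

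For arbitrary $y,y'\in Y_m$ I pick preimages $x\in\pi_m^{-1}(y)$, $x'\in\pi_m^{-1}(y')$ with some care. If one of $y,y'$ equals the collapsed point $p:=\pi_m(X^{(m)})$, I take that preimage to be $a_\alpha$, where $\alpha\in Q^{m+1}$ is the index of the component containing the other preimage. In this way the argument splits into two cases: either $x,x'$ both lie in a single $X_\alpha$ with $\alpha\in Q^{m+1}$, or they lie in disjoint $X_\alpha\setminus\{a_\alpha\}$ and $X_{\alpha'}\setminus\{a_{\alpha'}\}$ with $\alpha\ne\alpha'$.

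In the first case the bound is almost immediate: Lemma~\ref{L:F-entropy-metric} gives $d(F(x),F(x'))\le L_{m+1}^2 d(x,x')$, definition (\ref{EQ:dm-def}) yields $d_m(\pi_m F(x),\pi_m F(x'))\le d(F(x),F(x'))$, and the equality $\rho_m(x)=\rho_m(x')=a_\alpha$ forces the subtracted term in $d_m(y,y')$ to vanish, giving $d_m(y,y')=d(x,x')$. In the second case, convexity of $d$ together with the subdendrite property of $X^{(m)}$ forces the unique arc $[x,x']$ to enter $X^{(m)}$ at $a_\alpha$ and leave at $a_{\alpha'}$; additivity of length along this arc reduces (\ref{EQ:dm-def}) to $d_m(y,y')=d(x,a_\alpha)+d(x',a_{\alpha'})$. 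Since $X^{(m)}$ is $F$-invariant, both $F(a_\alpha)=a_{\varrho(\alpha)}$ and $F(a_{\alpha'})=a_{\varrho(\alpha')}$ project to $p$ under $\pi_m$, so the triangle inequality in $(Y_m,d_m)$ through $p$, together with Lemma~\ref{L:F-entropy-metric} applied separately on $X_\alpha$ and $X_{\alpha'}$, gives
\begin{equation*}
 d_m(G_m(y),G_m(y'))\le L_{m+1}^2\bigl(d(x,a_\alpha)+d(x',a_{\alpha'})\bigr)=L_{m+1}^2\, d_m(y,y')\le L_m^2\, d_m(y,y').
\end{equation*}

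The main point requiring care is the identity $d_m(y,y')=d(x,\rho_m(x))+d(x',\rho_m(x'))$ when $\rho_m(x)\ne\rho_m(x')$, which underpins the second case. This is a standard consequence of convexity of $d$ and unique arcwise connectedness of the dendrite: the arc $[x,x']$ decomposes as $[x,\rho_m(x)]\cup[\rho_m(x),\rho_m(x')]\cup[\rho_m(x'),x']$ with the middle piece lying inside the subdendrite $X^{(m)}$, so $d(x,x')=d(x,\rho_m(x))+d(\rho_m(x),\rho_m(x'))+d(\rho_m(x'),x')$, and the identity follows from (\ref{EQ:dm-def}).
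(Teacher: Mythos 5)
Your proof is correct and follows the same two-case strategy as the paper's: estimate $d_m(G_m(y),G_m(y'))$ directly when the preimages $x,x'$ can be taken in a single subdendrite $X_\alpha$ hanging off $X^{(m)}$, and otherwise pass through the collapsed fixed point $p=\pi_m(X^{(m)})$ using the triangle inequality for $d_m$. The one technical difference --- and it is an improvement --- is that you index by $\alpha\in Q^{m+1}$, so that $X_\alpha\cap X^{(m)}=\{a_\alpha\}$ and $\rho_m\equiv a_\alpha$ on $X_\alpha$, which is exactly what makes $d_m(\pi_m x,\pi_m x')=d(x,x')$ hold automatically for $x,x'\in X_\alpha$. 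The paper instead uses $\alpha\in Q^m$ and asserts $\rho_m(x)=\rho_m(x')$ for $x,x'\in X_\alpha$; that assertion fails when $x,x'$ lie in distinct components $X_{\alpha r}\setminus\{a_{\alpha r}\}$ and $X_{\alpha r'}\setminus\{a_{\alpha r'}\}$ of $X_\alpha\setminus X^{(m)}$, so one must fall back on the triangle-inequality case there anyway; your choice of index set sidesteps this. The remaining ingredients of your argument --- the arc decomposition through $X^{(m)}$ giving $d_m(y,y')=d(x,a_\alpha)+d(x',a_{\alpha'})$, the observation $G_m(p)=p$ from $F$-invariance of $X^{(m)}$, and the appeal to Lemma~\ref{L:F-entropy-metric} on $X_\alpha$, $X_{\alpha'}$ with constant $L_{m+1}^2\le L_m^2$ --- are all in order.
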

\begin{proof}
Put $G=G_m$.
Take any $y,y'\in Y_m$. 
If there is $\alpha\in Q^m$ such that $y,y'\in \pi_m(X_\alpha)$ then
we can find $x\in \pi_m^{-1}(y)$, $x'\in \pi_m^{-1}(y')$ such that $x,x'\in X_\alpha$; hence, since
$\rho_m(x)=\rho_m(x')$,
\begin{equation}\label{EQ:F-entropy-factor-lipschitz}
d_m(G(y),G(y')) \le  d(F(x),F(x'))\le L_m^2 d(x,x')=L_m^2 d_m(y,y').
\end{equation}
Otherwise put $y''=\pi_m(X^{(m)})$; since $y''\in \pi_m(X_\alpha)$ for every $\alpha\in Q^m$,
(\ref{EQ:F-entropy-factor-lipschitz}) and convexity of $d_m$ give
$d_m(G(y),G(y')) \le d_m(G(y),G(y'')) + d_m(G(y''),G(y')) \le L_m^2(d_m(y,y'')+d_m(y'',y'))
=L_m^2 d_m(y,y')$.
\end{proof}

\begin{lemma}\label{L:F-entropy-factor-equal-entropy}
Let $m\ge 0$ and let $G_m:Y_m\to Y_m$ be defined by (\ref{EQ:Gm-def}).
Then $h(G_m)=h(F)$.
\end{lemma}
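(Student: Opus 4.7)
The plan is to prove the equality by matching upper and lower bounds. The easy direction $h(G_m) \le h(F)$ is immediate because $\pi_m : X \to Y_m$ is a continuous surjection satisfying $G_m \circ \pi_m = \pi_m \circ F$, so $G_m$ is a topological factor of $F$ and topological entropy never increases under factors.

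For the reverse inequality $h(F) \le h(G_m)$, I would apply Bowen's fiber-entropy inequality
$$
h(F) \;\le\; h(G_m) + \sup_{y \in Y_m} h\!\left(F,\, \pi_m^{-1}(y)\right),
$$
where $h(F, K)$ denotes the Bowen topological entropy on $K \subseteq X$. Every fiber of $\pi_m$ is a singleton except for the unique non-trivial fiber $\pi_m^{-1}(\pi_m(X^{(m)})) = X^{(m)}$; singletons carry zero Bowen entropy, and since $X^{(m)}$ is compact and $F$-invariant (Proposition~\ref{P:F-cont-surj}), the supremum reduces to the ordinary topological entropy $h(F|_{X^{(m)}})$ of the restriction. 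Hence the problem is reduced to proving $h(F|_{X^{(m)}}) = 0$.

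To establish this, I would invoke the variational principle on the compact $F$-invariant space $X^{(m)}$. Any $F|_{X^{(m)}}$-invariant Borel probability measure $\mu$ extends trivially (by zero on the complement) to an $F$-invariant Borel probability measure on $X$ concentrated on $X^{(m)}$. Proposition~\ref{P:F-invmeas} then forces $\mu(X^{(m)} \setminus \{a_\emptyword, b_\emptyword\}) = 0$, so $\mu$ is a convex combination of the Dirac masses at the two fixed points $a_\emptyword$ and $b_\emptyword$, giving $h_\mu(F) = 0$. The variational principle then yields $h(F|_{X^{(m)}}) = 0$, and combining everything gives $h(F) \le h(G_m)$.

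The main point to handle with care is the reduction via Bowen's inequality, in particular the identification $h(F, X^{(m)}) = h(F|_{X^{(m)}})$ for the compact invariant fiber; this is a standard consequence of $X^{(m)}$ being compact and forward-invariant under $F$, both of which are supplied by Proposition~\ref{P:F-cont-surj}. Once that is in place, the rest is a clean application of Bowen's bound together with the already-established structure of invariant measures on $X^{(m)}$.
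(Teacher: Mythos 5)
Your argument is correct and structurally identical to the paper's: the factor inequality gives $h(G_m)\le h(F)$, and Bowen's fiber-entropy bound together with the observation that the only non-degenerate fiber is $X^{(m)}$ reduces everything to showing $h(F|_{X^{(m)}})=0$. The only divergence is in how that last fact is justified: the paper cites Lemma~\ref{L:F-props-arc-in-Xm} (every orbit in $X^{(m)}$ converges to $a_\emptyword$ or $b_\emptyword$), leaving implicit the step from ``$\omega$-limit sets are fixed points'' to ``zero entropy,'' whereas you close that gap explicitly by routing through Proposition~\ref{P:F-invmeas} and the variational principle, showing directly that every invariant measure on $X^{(m)}$ sits on $\{a_\emptyword,b_\emptyword\}$. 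Both are valid; yours is slightly more self-contained, the paper's is more terse but relies on the same underlying mechanism.
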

\begin{proof}
Since $G_m$ is a factor of $F$ we have $h(G_m)\le h(F)$.
By Bowen's formula \cite[Theorem~17]{Bowen}, $h(F)\le h(G_m)+\sup_{y\in Y_m} h(F,\pi_m^{-1}(y))$.
So also $h(F)\le h(G_m)$ since the fibre entropies $h(F,\pi_m^{-1}(y))$ are all zero;
in fact, either $\pi_m^{-1}(y)$ is a singleton, or $\pi_m^{-1}(y)=X^{(m)}$ and $h(F,X^{(m)})=h(F|_{X^{(m)}})=0$
by Lemma~\ref{L:F-props-arc-in-Xm}.
\end{proof}

\subsection{Proof of the main theorem}\label{SS:G-proof}
Now we are ready to prove Theorem~\ref{T:main}. Let $L>1$ be arbitrary. Take $m\ge 2$ such that
$L_m^2\le L$ and put $(X,f)=(Y_m,G_m)$. Then $f$ satisfies the property (g) from the theorem
by Lemma~\ref{L:F-entropy-factor-equal-entropy},
and also the property (f) with $d=d_m$ by Lemma~\ref{L:F-entropy-factor-lipschitz}.
Since $f$ is a factor of $F$, Proposition~\ref{P:F-trans} gives that $f$ is transitive.
The fact that no transitive point of $f$ is a cut point of $X$ follows
from Lemma~\ref{L:F-props-arc-in-Xm};
hence we have (a).
From Proposition~\ref{P:F-RPD} easily follows that, for every $k\ge 0$,
$(\pi_m(D_\gamma))_{\gamma\in\Sigma^k}$ is a regular periodic decomposition for $f$;
thus we have (b). Proposition~\ref{P:F-period} implies that $f$ has a unique periodic point
and we have (c). Since the `collapsed' subdendrite $X^{(m)}$ is nowhere dense,
almost openness  of $F$ (Proposition~\ref{P:F-almost-open}) immediately gives that $f$ is
almost open; $f$ is not open by the same reasoning as that used for $F$, so we have (d).
Finally, (e) follows from Proposition~\ref{P:F-invmeas} and
the fact that, for any non-atomic invariant measure $\mu$ of $F$,
the measure-theoretic factor induced by $\pi_m$ is isomorphic
to $(X,\BBb_X,\mu,F)$.
Thus Theorem~\ref{T:main} is proved.

\subsection*{Acknowledgements}
The author is greatly indebted to Tomasz Downarowicz, Sergiy Ko\-lya\-da, Dominik Kwietniak and \mL{}ubom\'ir Snoha
for several helpful comments concerning the subject of the paper.
Supported by the Slovak Research and Development Agency under contract no.~APVV-0134-10
and by the Slovak Grant Agency under grant no.~VEGA~1/0978/11.

\newcommand{\arxiv}[2]{\href{http://arxiv.org/abs/#1}{arXiv:#1 [#2]}}

\end{document}